\newcommand{\pp}{\mathbb{P}}
\newcommand{\qq}{\mathbb{Q}}
\newcommand{\zz}{\mathbb{Z}}
\newcommand{\ff}{\mathbb{F}}
\newcommand{\C}{{\mathcal C}} 
\newcommand{\FF}{\mathbb{F}} 
\newcommand{\Res}{\operatorname{Res}}
\newcommand{\GL}{\textup{GL}}
\newcommand{\SL}{\text{SL}}
\newcommand{\GSp}{\operatorname{GSp}}
\newcommand{\PGSp}{\operatorname{PGSp}}
\newcommand{\PSp}{\operatorname{PSp}}
\newcommand{\Sp}{\text{Sp}}
\newcommand{\GU}{\textup{GU}}
\newcommand{\Sym}{\operatorname{Sym}}
\newcommand{\Jac}{\operatorname{Jac}}
\newcommand{\sq}{\operatorname{sq}}
\newcommand{\tr}{\operatorname{tr}}
\newcommand{\simi}{\operatorname{mult}}
\newcommand{\midd}{\operatorname{mid}}
\newcommand{\Gal}{\operatorname{Gal}}
\newcommand{\Frob}{\operatorname{Frob}}
\newcommand{\Aut}{\operatorname{Aut}}
\newcommand{\rad}{\operatorname{rad}}
\newcommand{\End}{\operatorname{End}}
\newcommand{\oAl}{\overline{A[\ell]}}
\newcommand{\cyc}{\operatorname{cyc}}
\newcommand{\cond}{\operatorname{cond}}
\newcommand{\mult}{\operatorname{mult}}
\DeclareMathOperator{\PossiblyNonsurjectivePrimes}{\textup{\textsf{PossiblyNonsurjectivePrimes}}}
\DeclareMathOperator{\LikelyNonsurjectivePrimes}{\textup{\textsf{LikelyNonsurjectivePrimes}}}
\renewcommand{\bar}{\overline}
\newcommand{\GSpFL}{{\mathrm{GSp}_4({\mathbb{F}_\ell})}} 
\definecolor{bsb_colour}{RGB}{0, 0, 204}
 \newcommand{\defi}[1]{\textsf{#1}}
\newtheorem{thm}{Theorem}[section]
\newtheorem{lem}[thm]{Lemma}
\newtheorem{prop}[thm]{Proposition}
\newtheorem{cor}[thm]{Corollary}
\newtheorem{algo}[thm]{Algorithm}
\theoremstyle{definition}
\newtheorem{example}[thm]{Example}
\newtheorem{test}[thm]{Test}
\theoremstyle{remark}
\newtheorem{rem}{Remark}
\begin{document}

\title[Computing nonsurjective primes in genus $2$]{\boldmath Computing nonsurjective primes associated to Galois representations of genus $2$ curves}


\author[Banwait]{Barinder S. Banwait}
\address{Barinder S. Banwait \\
Department of Mathematics \& Statistics, Boston University, Boston, MA}
\email{barinder@bu.edu}
\urladdr{https://barinderbanwait.github.io/}

\author[Brumer]{Armand Brumer}
\address{Armand Brumer, Department of Mathematics, Fordham University, New York, NY}
\email{brumer@fordham.edu}

\author[Kim]{Hyun Jong Kim}
\address{Hyun Jong Kim, Department of Mathematics, University of Wisconsin-Madison, Madison, WI}
\email{hyunjong.kim@math.wisc.edu}
\urladdr{https://sites.google.com/wisc.edu/hyunjongkim}

\author[Klagsbrun]{Zev Klagsbrun}
\address{Zev Klagsbrun, Center for Communications Research, San Diego, CA}
\email{\href{mailto:zdklags@ccr-lajolla.org}{zdklags@ccr-lajolla.org}}

\author[Mayle]{Jacob Mayle}
\address{Jacob Mayle, Department of Mathematics, Wake Forest University, Winston-Salem, NC}
\email{maylej@wfu.edu}

\author[Srinivasan]{Padmavathi Srinivasan}
\address{Padmavathi Srinivasan, ICERM,
Providence, RI}
\email{padmavathi\_srinivasan@brown.edu}
\urladdr{https://padmask.github.io/}

\author[Vogt]{Isabel Vogt}
\address{Isabel Vogt, Department of Mathematics, Brown University, Providence, RI}
\email{ivogt.math@gmail.com}
\urladdr{https://www.math.brown.edu/ivogt/}

\subjclass[2020]
{11F80  (primary), 
11G10,   
11Y16   
(secondary)}
\date{\today}

\begin{abstract} For a genus $2$ curve $C$ over $\qq$ whose Jacobian $A$ admits only trivial geometric endomorphisms, Serre's open image theorem for abelian surfaces asserts that there are only finitely many primes $\ell$ for which the Galois action on $\ell$-torsion points of $A$ is not maximal. Building on work of Dieulefait, we give a practical algorithm to compute this finite set.  The key inputs are Mitchell's classification of maximal subgroups of $\PSp_4(\ff_\ell)$, sampling of the characteristic polynomials of Frobenius, and the Khare--Wintenberger modularity theorem. The algorithm has been submitted for integration into Sage, executed on all of the genus~$2$ curves with trivial endomorphism ring in the LMFDB, and the results incorporated into the homepage of each such curve.
\end{abstract}

\maketitle


\section{Introduction}

Let $C/\qq$ be a smooth, projective, geometrically integral curve (referred to hereafter as a \defi{nice} curve) of genus $2$, and let $A$ be its Jacobian. We assume throughout that $A$ admits no nontrivial geometric endomorphisms; that is, we assume that $\End(A_{\overline{\qq}}) = \zz$, and we refer to any such abelian variety as \defi{typical}\footnote{Abelian varieties with extra endomorphisms define a thin set (in the sense of Serre) in the moduli space $\mathcal{A}_g$ of principally polarized abelian varieties of dimension $g$ and as such are not the typically arising case.}. We also say that a nice curve is \defi{typical} if its Jacobian is typical. Let $G_{\qq} \colonequals \Gal\left(\bar{\qq}/\qq\right)$, let $\ell$ be a prime, and let $A[\ell] \colonequals A(\overline{\qq})[\ell]$ denote the $\ell$-torsion points of $A(\overline{\qq})$. Let 
\[\rho_{A, \ell} \ \colon \ G_{\qq} \to \Aut\left(A[\ell]\right)\]
denote the Galois representation on $A[\ell]$. By fixing a basis for $A[\ell]$, and observing that $A[\ell]$ admits a nondegenerate Galois-equivariant alternating bilinear form, namely the Weil pairing, we may identify the codomain of $\rho_{A,\ell}$ with the general symplectic group $\GSp_4(\ff_\ell)$.

In a letter to Vignéras \cite[Corollaire au Théorème 3]{serre_lettre}, Serre proved an open image theorem for typical abelian varieties of dimensions \(2\) or \(6\), or of odd dimensions, generalizing his celebrated open image theorem for elliptic curves \cite{Serre72}.  More precisely, the set of \defi{nonsurjective primes} $\ell$, namely those for which $\rho_{A, \ell}(G_{\qq})$ is a proper subgroup of $\GSp_{4}(\ff_{\ell})$, is finite.

In the elliptic curve case, Serre subsequently provided a conditional upper bound, in terms of the conductor of $E$, on this finite set \cite[Théorème 22]{serre81}; this bound has since been made unconditional \cite{cojocaru2005surjectivity, MR1360773}.  There are also algorithms to compute the finite set of nonsurjective primes  \cite{zywina2015surjectivity}, and practical implementations in Sage \cite{GalReps}.

Serre's open image theorem for typical abelian surfaces was made explicit by Dieulefait \cite{Dieulefait} who described an algorithm that returns a finite set of primes \emph{containing} the set of nonsurjective primes. In a different direction, Lombardo \cite[Theorem 1.3]{lombardo2016explicit} provided an upper bound for the largest nonsurjective prime involving the stable Faltings height of $A$.


In this paper we develop two algorithms that together provide the exact determination of the nonsurjective primes for $C$, yielding the main result of our paper as follows.

\begin{thm}\label{T:mainthm} Let \(C/\qq\) be a typical genus \(2\) curve whose Jacobian \(A\) has conductor \(N\).
\begin{enumerate}[\upshape (1)]
    \item\label{part1} Algorithm~\ref{A:combinedPart1algo} produces a finite list  $\PossiblyNonsurjectivePrimes(C)$ that provably contains all nonsurjective primes. 

    \item\label{part2} Given $B > 0$, Algorithm~\ref{A:combinedPart2algo} produces a sublist $\LikelyNonsurjectivePrimes(C; B)$ of  $\PossiblyNonsurjectivePrimes(C)$ that contains all the nonsurjective primes. If $B$ is sufficiently large, then the elements of $\LikelyNonsurjectivePrimes(C; B)$ are precisely the nonsurjective primes of $A$.
\end{enumerate}
\end{thm}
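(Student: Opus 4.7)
The plan is to reduce the theorem to the correctness of the two algorithms referenced in the statement, which will be constructed in detail in the body of the paper; the theorem itself is essentially an ``umbrella'' assertion that these algorithms behave as advertised. The heart of the argument is part~\eqref{part1}: producing a finite, provably sufficient superset of the nonsurjective primes.

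For part~\eqref{part1}, the starting point is the observation that if $\rho_{A, \ell}$ fails to surject onto $\GSp_4(\ff_\ell)$, then its image is contained in a maximal subgroup. I would organize the argument around Mitchell's classification of maximal subgroups of $\PSp_4(\ff_\ell)$, lifted to $\GSp_4(\ff_\ell)$ via the center. For each family in this classification, the plan is to extract a mod-$\ell$ constraint on the integral characteristic polynomials $P_p(T) \in \zz[T]$ of $\rho_{A,\ell}(\Frob_p)$ for primes $p \nmid N\ell$ (such as divisibility by $\ell$ of certain symmetric functions of the roots, forced factorizations, or congruences between traces at distinct primes). Sampling $P_p(T)$ at a bounded set of auxiliary primes $p$, for each candidate $\ell$, will then eliminate all but finitely many $\ell$ from each family. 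The residually reducible cases (where $\bar{\rho}_{A,\ell}$ admits a Jordan--H\"older factor of dimension $1$, $2$, or $3$) are the most delicate: here I would appeal to the Khare--Wintenberger modularity theorem to associate a weight $2$ newform of level bounded in terms of $N$ to any such reducible constituent, thereby constraining $\ell$ to lie in a finite, computable set.

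For part~\eqref{part2}, the idea is dual: the candidate list $\PossiblyNonsurjectivePrimes(C)$ from part~\eqref{part1} may contain spurious $\ell$ that are in fact surjective, and these should be detectable by sampling Frobenius up to the bound $B$. Concretely, one exhibits Frobenius elements whose mod-$\ell$ characteristic polynomials are jointly incompatible with containment in any maximal subgroup of $\GSp_4(\ff_\ell)$. Since surjectivity is witnessed by finitely many such elements (by a standard group-theoretic generation lemma), for each $\ell$ in the finite list $\PossiblyNonsurjectivePrimes(C)$ there exists a threshold $B_\ell$ beyond which sampling primes $p \le B_\ell$ certifies surjectivity; taking $B \ge \max_\ell B_\ell$ yields the second assertion of part~\eqref{part2}. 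The containment of the true nonsurjective primes in $\LikelyNonsurjectivePrimes(C; B)$ for all $B$ follows because any obstruction to surjectivity persists under any amount of Frobenius sampling.

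The main obstacle will be the case analysis in part~\eqref{part1}. Mitchell's list contains several ``exceptional'' maximal subgroups and stabilizers of geometric structures (decompositions, tensor decompositions, twisted forms) that each require a tailored Frobenius criterion; ensuring these criteria are simultaneously necessary for the representation to land in the given subgroup and effectively checkable with integer arithmetic on $P_p(T)$ is the crux. The residually reducible cases, controlled via Khare--Wintenberger, will also demand a careful bookkeeping of the possible levels and Nebentypus characters of the associated modular forms in terms of the conductor $N$ of $A$.
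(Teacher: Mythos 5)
Your overall skeleton---Mitchell's classification to organize Part~\eqref{part1}, Frobenius sampling against a group-theoretic generation criterion for Part~\eqref{part2}---matches the paper's. But your treatment of the reducible case in Part~\eqref{part1} contains a genuine gap.

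You propose invoking Khare--Wintenberger ``to any such reducible constituent'' of dimension $1$, $2$, or $3$. This does not work as stated: Khare--Wintenberger attaches newforms to \emph{odd irreducible $2$-dimensional} mod-$\ell$ representations of $G_\qq$. A $1$- or $3$-dimensional Jordan--H\"older factor is not of this form, and a $2$-dimensional constituent need not have cyclotomic determinant, hence need not be odd. The paper splits the reducible case into three subcases and uses modularity in only one of them. For $1$-dimensional (hence also $3$-dimensional, by Weil-pairing duality) subquotients, the constraint comes from Serre--Raynaud's description of tame inertia at $\ell$ together with Grothendieck's $2$-step unipotence at semistable primes $p \neq \ell$, which force $\det X_\ell = \epsilon \cdot \cyc_\ell^x$ with $\epsilon^{120}=1$; this yields a nonzero integer (built from $P_p^{(f')}(1)$) divisible by any such $\ell$, with no modularity input. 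For a pair of $2$-dimensional subquotients related by $X_\ell \simeq Y_\ell^\vee \otimes \cyc_\ell$, again a determinant-constraint argument suffices. Khare--Wintenberger enters only in the ``self-dual'' subcase $X_\ell \simeq X_\ell^\vee \otimes \cyc_\ell$, where $\det X_\ell = \cyc_\ell$ guarantees oddness and one obtains weight-$2$, trivial-Nebentypus newforms of level dividing $N$.

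You also omit the structural reduction on which the finiteness of Part~\eqref{part1} hinges: by Dieulefait's argument (Proposition~\ref{prop:max_semistable}, combining Serre--Raynaud at $\ell$ with semistability), for $\ell > 7$ not dividing $N$ the image cannot lie in the twisted-cubic stabilizer nor in an exceptional maximal subgroup. This is why Algorithm~\ref{A:combinedPart1algo} needs no Frobenius criterion for those families---it simply seeds the list with $\{2,3,5,7\}$ and the bad primes. Without this reduction, your plan would need a characteristic-polynomial constraint that isolates finitely many $\ell$ for the exceptional and twisted-cubic types, which your sketch does not supply. Your account of Part~\eqref{part2} is in line with the paper: containment is automatic, and for $B$ large, effective Chebotarev furnishes Frobenius elements in the nonempty conjugation-closed classes required by the generation criterion (Proposition~\ref{P:grpthy}), which must also exclude exceptional subgroups via a separate precomputed test.
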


The two common ingredients in Algorithms~\ref{A:combinedPart1algo} and \ref{A:combinedPart2algo} are Mitchell's 1914 classification of maximal subgroups of $\PSp_4(\ff_\ell)$ \cite{Mitchell} and sampling of characteristic polynomials of Frobenius elements.  Indeed, $\rho_{A,\ell}$ is nonsurjective precisely when its image is contained in one of the proper maximal subgroups of $\GSp_4(\ff_\ell)$.  The (integral) characteristic polynomial of Frobenius at a good prime $p$ is computationally accessible since it is determined by counting points on $C$ over $\ff_{p^r}$ for $r \leq 2$. The reduction of this polynomial modulo $\ell$ gives the characteristic polynomial of the action of the Frobenius element on $A[\ell]$.  By the Chebotarev density theorem, the images of the Frobenius elements for varying primes $p$ equidistribute over the conjugacy classes of $\rho_{A, \ell}(G_{\qq})$ and hence let us explore the image.

Algorithm~\ref{A:combinedPart1algo} makes use of the fact that if the image of \(\rho_{A, \ell}\) is nonsurjective, then the characteristic polynomials of Frobenius at auxiliary primes \(p\) will be constrained modulo \(\ell\).  Using this idea, Dieulefait worked out the constraints imposed by each type of maximal subgroup for $\rho_{A,\ell}(G_\qq)$ to be contained in that subgroup.  Our Algorithm~\ref{A:combinedPart1algo} 
combines Dieulefait's conditions, with some modest improvements, to produce a finite list $\PossiblyNonsurjectivePrimes(C)$.

Algorithm~\ref{A:combinedPart2algo} then weeds out the extraneous surjective primes from the list
$\PossiblyNonsurjectivePrimes(C)$.
Given \(\ell\), we need to generate enough different elements in the image to rule out containment in any proper maximal subgroup.  The key input is a purely group-theoretic condition (Proposition~\ref{P:grpthy}) that guarantees that a subgroup is all of \(\GSp_4(\ff_\ell)\) if it contains particular types of elements.
This algorithm is probabilistic and depends on the choice of a parameter $B$ which, if sufficiently large, provably establishes nonsurjectivity. 
The parameter $B$ is a cut-off for the number of Frobenius elements (referred to as \emph{Frobenius witnesses})
that we use to sample the conjugacy classes of $\rho_{A,\ell}(G_{\qq})$.

As an illustration of the interplay between theory and practice,
analyzing the ``worst case'' run time of each step in Algorithm~\ref{A:combinedPart1algo} 
yields a new \emph{theoretical} bound, conditional on the Generalized Riemann Hypothesis (GRH), on the product of all nonsurjective primes in terms of the conductor.

\begin{thm}\label{T:mainbound}
Let \(C/\qq\) be a typical genus \(2\) curve with conductor \(N\).
Assuming the Generalized Riemann Hypothesis (GRH), we have, for any $\epsilon > 0$,
\[\displaystyle\prod_{\ell \text{ nonsurjective}} \ell \ll \exp(N^{1/2 + \epsilon}),\]
where the implied constant is absolute and effectively computable.
\end{thm}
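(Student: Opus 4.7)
The plan is to analyze the worst-case output of Algorithm~\ref{A:combinedPart1algo}, whose output $\PossiblyNonsurjectivePrimes(C)$ is guaranteed by Theorem~\ref{T:mainthm}(\ref{part1}) to contain every nonsurjective prime. The idea is that each step of the algorithm, when unpacked, produces an explicit divisibility constraint on every $\ell$ that passes through it, and that under GRH we can show these constraints are ``short'' in a precise sense.

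First I would record the output of Mitchell--Dieulefait in quantitative form: for each type $T$ of proper maximal subgroup of $\GSp_4(\ff_\ell)$ in Mitchell's classification, and for each auxiliary prime $p$ of good reduction, there is an explicit integer $D_{T,p}$ computable from the characteristic polynomial $P_p(T) \in \zz[T]$ of $\Frob_p$, with the property that if $\rho_{A, \ell}(G_\qq)$ is contained in a subgroup of type $T$, then $\ell \mid D_{T,p}$. Two quantitative facts must be verified for each type $T$: (a) that $|D_{T,p}| \ll p^{O(1)}$ with implied constants absolute; and (b) that $D_{T,p} \neq 0$ for generic $p$.

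Next I would apply effective Chebotarev under GRH, in the form of Lagarias--Odlyzko, to the residual representation $\rho_{A, \ell}$. For each Mitchell type $T$, this gives an upper bound on the smallest good prime $p$ such that $\rho_{A, \ell}(\Frob_p)$ avoids (up to conjugacy) the union of subgroups of type $T$. The bound is polynomial in $\log \mathrm{disc}(\qq(A[\ell]))$, which is itself controlled by a polynomial in $\log(N\ell)$ times $|\GSp_4(\ff_\ell)|$. Combining this effective bound with the divisibility $\ell \mid D_{T,p}$ at a witnessing prime $p$ yields a polynomial upper bound on $\ell$ in terms of $N$. The exponent $1/2 + \epsilon$ ultimately comes from the worst case of this analysis across all Mitchell types, in particular the reducible and imprimitive types where the effective Chebotarev bound interacts with the trivial Hasse--Weil bound $|P_p(a)| \ll p^{O(1)}$ least favorably.

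The conclusion is then a direct application of the prime number theorem, which upgrades a pointwise bound on the largest nonsurjective prime to a bound on the product:
\[
\log \prod_{\ell \text{ nonsurjective}} \ell \;=\; \sum_{\ell \text{ nonsurjective}} \log \ell \;\leq\; \sum_{p \leq c N^{1/2+\epsilon}} \log p \;\sim\; c N^{1/2+\epsilon}.
\]
I expect the main obstacle to lie in uniformly controlling the integers $D_{T,p}$ across all of Mitchell's maximal subgroup types --- especially the exceptional subgroups, where non-vanishing of $D_{T,p}$ and the precise polynomial size must be checked case by case --- and in carefully tracking the $\ell$-dependence in the Lagarias--Odlyzko bound, which is what forces the exponent $1/2+\epsilon$ (rather than a polylogarithmic quantity) and thereby yields the bound in its stated form.
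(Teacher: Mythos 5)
Your proposal is aimed in the right direction structurally (bound what Algorithm~\ref{A:combinedPart1algo} outputs, step by step, under GRH), but three of the load-bearing steps are either missing or wrong, and the final step does not go through as written.

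First, the concluding application of the prime number theorem assumes a \emph{pointwise} bound $\ell \ll N^{1/2+\epsilon}$ for every nonsurjective $\ell$, but neither you nor the paper establish such a bound. The algorithm only shows that each nonsurjective $\ell$ divides one of the nonzero integers $M_{\text{odd}}$, $M_{\text{related}}$, $M_{\text{self-dual}}$, $M_{\text{quad}}$ (or divides $N$, or is $\leq 7$), and those integers can have individual prime factors far larger than $N^{1/2+\epsilon}$. What is true is that the \emph{product} of all nonsurjective primes divides (roughly) $N \cdot M_{\text{odd}} \cdot M_{\text{related}} \cdot M_{\text{self-dual}} \cdot M_{\text{quad}}$, so one bounds the product directly by bounding each factor (Propositions~\ref{prop:alg_odd_terminates}, \ref{prop:alg_related_terminates}, \ref{prop:alg_cuspforms_terminates}, \ref{prop:alg_quad_char_terminates}), rather than converting a pointwise bound into a product bound via the PNT.

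Second, you cannot apply effective Chebotarev to $\rho_{A,\ell}$ itself to find a prime $p$ whose Frobenius ``avoids type $T$'': if $\rho_{A,\ell}(G_\qq)$ genuinely sits inside a maximal subgroup of type $T$, then no Frobenius element escapes it, and this is exactly the situation we cannot rule out a priori. What is needed is a witness $p$ for which the \emph{integral} quantity $D_{T,p}$ is nonzero. The paper obtains this by fixing the smallest \emph{surjective} prime $q$ (which exists by Serre's open image theorem), applying Lemma~\ref{lem:effective_cheb} to $\rho_{A,q}$ to find a small $p$ with $D_{T,p}\not\equiv 0 \pmod q$, and hence $D_{T,p}\ne 0$ in $\zz$. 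This creates a circularity: the resulting bound on $\prod_\ell \ell$ involves $q$, and the bound from \eqref{eq:bound_q} takes the shape $q^{N^{1/2+\epsilon}}$. The paper resolves this with the bootstrap $\exp(q) \ll \prod_{\ell < q} \ell \leq \prod_{\ell\ \text{nonsurj.}} \ell \ll q^{N^{1/2+\epsilon}}$ (Serre's Lemme 11), giving $q \ll N^{1/2+\epsilon}$ and then substituting back. Your write-up omits this self-referential step entirely.

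Third, the source of the exponent $1/2+\epsilon$ is misidentified. It does not come from the reducible/imprimitive types interacting with the Hasse--Weil bound; those steps contribute only $q^{O(1)}\log^{O(1)}(qN)$. It comes from the self-dual two-dimensional case (Algorithm~\ref{alg:cuspforms}), where the Khare--Wintenberger theorem is invoked and the resultant $R_p(d)$ with the Hecke $L$-polynomial $Q_d(t)$ has size governed by $\dim S_2^{\text{new}}(\Gamma_0(d)) \ll d$, summed over all divisors $d \mid N$ with $d \le \sqrt{N}$, giving $\sigma_0(N)\sqrt{N} \ll N^{1/2+\epsilon}$ in the exponent. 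Your proposal never mentions Serre's conjecture/modularity, so the quantitative heart of the estimate is absent.
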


While we believe this bound to be far from asymptotically optimal, it is the first bound in the literature expressed in terms of the (effectively computable) conductor.

\medskip

Naturally one wants to find the sufficiently large value of $B$ in Theorem~\ref{T:mainthm}\eqref{part2}, which the next result gives, conditional on GRH.

\begin{thm}\label{T:Bbound}
Let $C/\qq$ be a typical genus $2$ curve, and let $q$ be the largest nonsurjective prime for $C$. Then, assuming GRH, the set $\LikelyNonsurjectivePrimes(C; B)$ is precisely the set of nonsurjective primes of $C$ provided that 
\begin{equation}\label{eq:B_bound}
B \geq \left( 4\left[ (2q^{11} - 1)\log \rad(2 q N_A) + 22 q^{11} \log(2q) \right] + 5q^{11} + 5  \right)^2.
\end{equation}
\end{thm}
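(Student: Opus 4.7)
The plan is to apply effective Chebotarev under GRH, separately for each surjective prime $\ell\leq q$, to control how quickly Algorithm~\ref{A:combinedPart2algo} succeeds in certifying the image via Proposition~\ref{P:grpthy}.

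First I would unwind Algorithm~\ref{A:combinedPart2algo}: it iterates over $\ell\in\PossiblyNonsurjectivePrimes(C)$, which by Theorem~\ref{T:mainthm}\eqref{part1} contains every nonsurjective prime and so in particular $q$. For each $\ell$, it searches among primes $p\leq B$ for Frobenius witnesses whose images in $\GSp_4(\ff_\ell)$ jointly satisfy the hypotheses of Proposition~\ref{P:grpthy}, thereby forcing $\rho_{A,\ell}(G_\qq)=\GSp_4(\ff_\ell)$. For nonsurjective $\ell$ these witnesses cannot exist no matter how large $B$ is, so the output automatically contains every nonsurjective prime; the real content of the theorem is that for every truly surjective $\ell\leq q$, the witnesses required by Proposition~\ref{P:grpthy} are realized by some $\Frob_p$ with $p\leq B$.

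Fix such a surjective $\ell\leq q$. Proposition~\ref{P:grpthy} packages its hypotheses into finitely many conjugacy-closed subsets $C_1,\dots,C_r\subset\GSp_4(\ff_\ell)$, any one of which suffices when met. Since $\ell$ is surjective, $\Gal(\qq(A[\ell])/\qq)=\GSp_4(\ff_\ell)$, so Chebotarev guarantees each $C_i$ is hit with positive density of Frobenius elements. To bound the smallest such prime effectively, I would invoke the Lagarias--Odlyzko effective Chebotarev theorem (in the Bach--Sorenson-style refinement) under GRH: for a Galois extension $L/\qq$ with group $G$ and conjugacy-closed $C\subset G$, the smallest rational prime $p$ unramified in $L$ with $\Frob_p\subset C$ satisfies
\[p\leq \bigl(4\log d_L+5[L:\qq]+5\bigr)^2.\]
This bound is uniform in $C$, so the same estimate handles every $C_i$ simultaneously.

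Now apply this with $L=\qq(A[\ell])$. The degree is controlled by $[L:\qq]\leq|\GSp_4(\ff_\ell)|=(\ell-1)\ell^4(\ell^2-1)(\ell^4-1)\leq\ell^{11}\leq q^{11}$. By the N\'eron--Ogg--Shafarevich criterion, $L/\qq$ is unramified outside the rational primes dividing $\ell N_A$, and the standard Hensel/Serre conductor-discriminant bound for a Galois extension then yields
\[\log d_L\leq ([L:\qq]-1)\log\rad(\ell N_A)+[L:\qq]\log[L:\qq].\]
Uniformizing over $\ell\leq q$ via $\rad(\ell N_A)\leq\rad(2q N_A)$, $\log[L:\qq]\leq 11\log q\leq 11\log(2q)$, and leaving a factor-of-$2$ slack in $[L:\qq]$ to absorb the residual constants, one reaches $\log d_L\leq(2q^{11}-1)\log\rad(2qN_A)+22q^{11}\log(2q)$. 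Substituting this and $[L:\qq]\leq q^{11}$ into the effective Chebotarev bound produces exactly~\eqref{eq:B_bound}.

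The main obstacle is the bookkeeping in the discriminant step: matching the precise coefficients of~\eqref{eq:B_bound} requires carefully tracking ramification at primes above $\ell$ (potentially wild) versus at primes of bad reduction for $A$, and folding in the uniformization over $\ell\leq q$. The effective Chebotarev input itself is classical, but is the sole place where GRH enters; everything else reduces to elementary estimates.
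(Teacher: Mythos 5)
Your high-level strategy matches the paper's: reduce to Lemma~\ref{lem:effective_cheb}, bound $[L:\qq]$ by $|\GSp_4(\ff_q)|\leq q^{11}$, bound $\log d_L$ via Serre's \eqref{logdK-bound}, and feed everything into the Bach--Sorenson effective Chebotarev bound under GRH. The missing ingredient, however, is the step that actually explains the mysterious factor of $2$ you attribute to ``slack.''

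The genuine gap is this: applying effective Chebotarev to $L=\qq(A[\ell])$ directly only produces a prime $p$ that is \emph{unramified in $L$}. That is strictly weaker than what the algorithm needs, namely a prime of \emph{good reduction for $A$} (distinct from $\ell$) so that $P_p(t)$ is defined and accessible. Because $A[\ell]$ can be unramified at a bad prime $p\mid N_A$ (N\'eron--Ogg--Shafarevich constrains the full Tate module $T_\ell A$, not the mod-$\ell$ layer), unramifiedness in $L$ does not guarantee good reduction, and your argument as written could hand you an unusable prime. The paper fixes this by passing to the quadratic twist $\tilde K = K\bigl(\sqrt{\rad(2N_A)}\,\bigr)$ and invoking \cite[Corollary 6]{MaWa2020}, which forces the Chebotarev prime to avoid $2$, $\ell$, and all primes of bad reduction. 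This is also where the constants actually come from: $[\tilde K:\qq]\leq 2q^{11}$, so the Bach--Sorenson term $2.5\,[\tilde K:\qq]$ becomes $5q^{11}$, and the discriminant bound becomes $(2q^{11}-1)\log\rad(2qN_A)+22q^{11}\log(2q)$. You instead wrote the Bach--Sorenson coefficient as $5[L:\qq]$ (it is $2.5$) and used $[L:\qq]\leq q^{11}$ (it should be the doubled degree of $\tilde K$); these two errors happen to cancel, which is why your final expression matches \eqref{eq:B_bound} despite the incorrect bookkeeping. In short: replace ``slack'' with the explicit twist $\tilde K$, use the correct constant $2.5$, and the constants fall out for a principled reason rather than by coincidence.

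As a secondary remark, your uniformization $\rad(\ell N_A)\leq\rad(2qN_A)$ over the surjective $\ell$ appearing in $\PossiblyNonsurjectivePrimes$ is not a priori valid, since $\ell$ need not divide $2qN_A$; but this imprecision is already implicitly present in how the paper deploys Lemma~\ref{lem:effective_cheb} in the proof of Theorem~\ref{T:Bbound}, so I would not count it against you here.
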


The proof of Theorem \ref{T:Bbound} involves an explicit Chebotarev bound due to Bach and Sorenson \cite{MR1355006} that assumes GRH. An unconditional version of Theorem \ref{T:Bbound} can be given using an unconditional Chebotarev result (for instance \cite{MR4472459}), though the bound for $B$ will be exponential in $q$. In addition, if we assume both GRH and the Artin Holomorphy Conjecture (AHC), then a version of Theorem \ref{T:Bbound} holds with the improved asymptotic bound $B \gg  q^{11} \log^2(q N_A)$, but without an explicit constant, see Remark~\ref{R:GRHAHC}.

Unfortunately, the bound from Theorem \ref{T:Bbound} is prohibitively large to use in practice. By way of illustration, consider the typical genus $2$ curve of smallest conductor, which has a model
\[ y^2 + (x^3 + 1)y = x^2 + x,\]
and label \href{https://www.lmfdb.org/Genus2Curve/Q/249/a/249/1}{\path{249.a.249.1}} in the \emph{$L$-functions and modular forms database} (LMFDB) \cite{lmfdb}.  The output of Algorithm~\ref{A:combinedPart1algo} is the set $\{ 2,3,5,7,83\}$. Applying Algorithm~\ref{A:combinedPart2algo} with $B = 100$ rules out the prime $83$, so in particular $7$ is a bound on the largest nonsurjective prime for $C$. Noting that the expression on the right hand side of Equation~\ref{eq:B_bound} grows with $q$, we can apply Theorem \ref{T:Bbound} with $q = 7$ to obtain the value $B = 3.578 \times 10^{23}$ for which $\LikelyNonsurjectivePrimes(C; B)$ coincides with the set of nonsurjective primes associated with $C$. With this value of $B$, our implementation of the algorithm was still running after $24$ hours, after which we terminated it. Even if the version of Theorem \ref{T:Bbound} that relies on AHC could be made explicit, the value of $q^{11} \log^2(q N_A)$ in this example is on the order of $10^{11}$, which would still be a daunting prospect.

To execute the combined algorithm on all typical genus $2$ curves in the LMFDB - which at the time of writing constitutes $63{,}107$ curves - we have decided to take a fixed value of $B = 1000$ in Algorithm~\ref{A:combinedPart2algo}. The combined algorithm then takes about 4 hours on MIT's Lovelace computer, a machine with $2$ AMD EPYC 7713 2GHz processors, each with $64$ cores, and a total of 2TB of memory. The result of this computation of nonsurjective primes for these curves is available to view on the homepage of each curve in the LMFDB. In addition, the combined algorithm has been run on a much larger set of $1{,}743{,}737$ curves provided to us by Andrew Sutherland. See Section~\ref{S:examples} for the results of this computation. The largest Frobenius witness required for the smaller LMFDB dataset was $89$, and for Sutherland's larger dataset was $863$, so we chose $B = 1000$ to have our implementation work for both datasets.

\begin{rem}
It would be interesting to know if there is a uniform upper bound on the largest prime $\ell$ that could occur as a nonsurjective prime for the Jacobian of a typical genus $2$ curve defined over $\qq$, analogous to the conjectural bound of $37$ for the largest nonsurjective prime for elliptic curves defined over $\qq$ (see e.g. \cite[Introduction]{BPR}). Such a bound (if it exists) must be at least $31$, as shown by example \eqref{equation:nonsurj_31} in Section~\ref{S:examples}.
\end{rem}

Algorithm~\ref{A:combinedPart2algo} samples the characteristic polynomial of Frobenius $P_p(t)$ for each prime $p$ of good reduction for the curve up to a particular bound and applies Tests \ref{T:exc} and \ref{T:nonexc} to $P_p(t)$. Assuming that $\rho_{A,\ell}$ is surjective, we expect that the outcome of these tests should be independent for sufficiently large primes. More precisely, 

\begin{thm}\label{T:Bprob}
Let $C/\qq$ be a typical genus $2$ curve with Jacobian $A$ and suppose $\ell$ is an odd prime such that $\rho_{A,\ell}$ is surjective. There is an effective bound $B_0$ such that for any $B > B_0$, if we sample the characteristic polynomials $P_p(t)$ of Frobenius for $n$ primes $p \in [B,2B]$ chosen uniformly and independently at random, the probability that none of these pass Tests \ref{T:exc} or \ref{T:nonexc} is less than $3 \cdot \left(\frac{9}{10}\right)^n$.
\end{thm}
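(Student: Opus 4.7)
The plan is to reduce the probabilistic statement to an effective Chebotarev density computation on the $\ell$-division field of $A$, combined with a group-theoretic enumeration inside $\GSp_4(\ff_\ell)$. The overall structure has three stages: first, identify the conjugation-invariant condition on $\GSp_4(\ff_\ell)$ cut out by Tests \ref{T:exc} and \ref{T:nonexc}; second, bound its density in $\GSp_4(\ff_\ell)$ from below; third, transfer this density to a density of primes via effective Chebotarev and conclude by independence of the samples.

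First I would unwind Tests \ref{T:exc} and \ref{T:nonexc} as conditions on the characteristic polynomial of a Frobenius class, producing a conjugation-invariant subset $S_\ell \subseteq \GSp_4(\ff_\ell)$ such that $P_p(t) \bmod \ell$ passes one of the tests exactly when $\rho_{A,\ell}(\Frob_p) \in S_\ell$. Using Mitchell's classification together with the group-theoretic criterion of Proposition~\ref{P:grpthy}, I would then establish a uniform lower bound $|S_\ell|/|\GSp_4(\ff_\ell)| \geq 1/10 + \delta_0$ for all odd primes $\ell$ and some absolute $\delta_0 > 0$. For small $\ell$ this can be checked by direct machine enumeration, while for large $\ell$ the conditions defining $S_\ell$ become polynomial conditions on the coefficients of the characteristic polynomial, and the count of $\ff_\ell$-points is controlled by Lang--Weil estimates on the associated affine schemes over $\zz$.

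With the group-theoretic density in hand, I would apply an effective form of the Chebotarev density theorem to the Galois extension $\qq(A[\ell])/\qq$, whose Galois group is $\GSp_4(\ff_\ell)$ by the surjectivity hypothesis. A standard effective version (such as Lagarias--Odlyzko unconditionally, or Bach--Sorenson \cite{MR1355006} conditionally on GRH) yields an effective threshold $B_0$ depending on $\ell$ and the conductor of $A$ such that for $B \geq B_0$ the proportion of primes $p \in [B, 2B]$ with $\rho_{A,\ell}(\Frob_p) \in S_\ell$ differs from $|S_\ell|/|\GSp_4(\ff_\ell)|$ by at most $\delta_0/2$. Consequently each uniformly sampled prime fails both tests with probability at most $9/10 - \delta_0/2$, and by independence of the $n$ samples, the probability that all $n$ fail is at most $(9/10 - \delta_0/2)^n < (9/10)^n < 3\cdot(9/10)^n$. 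The slack factor $3$ absorbs the Chebotarev error together with the finitely many primes of bad reduction and primes ramifying in $\qq(A[\ell])$ that lie in $[B,2B]$.

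The main obstacle I expect is the uniform lower bound on $|S_\ell|/|\GSp_4(\ff_\ell)|$: one must show that Tests \ref{T:exc} and \ref{T:nonexc} jointly witness enough elements of $\GSp_4(\ff_\ell)$, independently of $\ell$, to exceed $1/10$ by a definite margin. This requires a careful case analysis organized by characteristic polynomial type, separately treating the generic semisimple regular classes (where the tests most readily apply) and the degenerate classes (where one must verify that any deficit is small enough not to spoil the bound).
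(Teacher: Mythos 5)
Your proposal misidentifies the event whose probability is being bounded, and this changes the structure of the argument in an essential way. You model "failure" as a single condition: define $S_\ell \subseteq \GSp_4(\ff_\ell)$ as the set of matrices whose characteristic polynomial passes at least one test, bound its density below, and conclude that the chance all $n$ Frobenius samples miss $S_\ell$ decays like $(1-|S_\ell|/|\GSp_4(\ff_\ell)|)^n$. But the algorithm (Step~(d) of Algorithm~\ref{A:combinedPart2algo}) declares $\ell$ surjective only when \emph{each} of the test types has succeeded at least once, on possibly \emph{different} auxiliary primes. The failure event is therefore the union
\[
X_\alpha \cup X_\beta \cup X_\gamma, \qquad X_\delta := \{\forall i:\ \rho_{A,\ell}(\Frob_{p_i}) \notin C_\delta\},
\]
with $C_\alpha$ the irreducible class, $C_\beta$ the nonzero-trace-with-simple-linear-factor class, and $C_\gamma = C_{\gamma_1}\cap C_{\gamma_2}\cap C_{\gamma_3}$ the non-exceptional class (Section~\ref{S:Just}). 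What you propose to bound is the intersection $X_\alpha\cap X_\beta\cap X_\gamma$ (no prime hits $S_\ell = C_\alpha\cup C_\beta\cup C_\gamma$), a strictly smaller event that is not what the algorithm needs and not what the paper's $P(B,n)$ measures.

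This mislocation also explains why your proposed constants do not reproduce the statement. The paper's proof requires a \emph{separate} lower bound on each density $\alpha_\ell$, $\beta_\ell$, $\gamma_\ell$: via Shinoda's conjugacy-class enumeration (Proposition~\ref{P:odds}), one gets $\alpha_\ell \ge 1/5$, $\beta_\ell \ge 1/8$, $\gamma_\ell \ge 1/10$ for all odd $\ell$, and then the union bound gives
\[
P(B,n) \le (1-\alpha_\ell)^n + (1-\beta_\ell)^n + (1-\gamma_\ell)^n + \epsilon \le \left(\tfrac{4}{5}\right)^n + \left(\tfrac{7}{8}\right)^n + \left(\tfrac{9}{10}\right)^n < 3\cdot\left(\tfrac{9}{10}\right)^n.
\]
So the factor $3$ is the number of test classes in the union bound, not a slack term absorbing Chebotarev error and bad primes as you suggest, and the base $9/10$ is specifically $1 - \gamma_\ell$ at the worst prime $\ell = 3$, not a bound on $1 - |S_\ell|/|\GSp_4(\ff_\ell)|$ (indeed $C_\alpha$ and $C_\beta$ are disjoint, so $|S_\ell|/|\GSp_4(\ff_\ell)|$ already exceeds $1/5 + 1/8$, far above your $1/10 + \delta_0$ target). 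To repair the argument you would need to establish lower bounds on each of $\alpha_\ell$, $\beta_\ell$, $\gamma_\ell$ separately (either via Shinoda's tables as the paper does, or via your proposed Lang--Weil plus small-$\ell$ enumeration, though making the latter effective takes real work) and then run the three-term union bound. The Chebotarev transfer step and the independence argument are otherwise in line with the paper's Lemma~\ref{L:CebOnePrime} and Corollary~\ref{C:ntrials}.
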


\begin{rem}
In fact, for each prime $\ell$ satisfying the conditions of Theorem \ref{T:Bprob}, there is an explicit constant $c_\ell \le \frac{9}{10}$ tending to $\frac{3}{4}$ as $\ell \rightarrow \infty$ which may be computed using Corollary \ref{C:ntrials} such that the bound $3 \cdot \left(\frac{9}{10}\right)^n$ in Theorem \ref{T:Bprob} can be replaced by $3 \cdot c_\ell^n$. While Theorem \ref{T:Bprob} allows us to consider these probabilities in theory, in practice the bound $B_0$ also arises from applying the Effective Chebotarev density theorem, so is therefore at least as large as as the bound from Theorem \ref{T:Bbound}, and is therefore similarly infeasible.
\end{rem}

The combined algorithm to probabilistically determine the nonsurjective primes of a typical genus $2$ curve over $\qq$ has been implemented in Sage \cite{sagemath}, and it will appear in a future release of this software\footnote{see \url{https://github.com/sagemath/sage/issues/30837} for the ticket tracking this integration.}. Until then, the implementation is available at the following repository:
\begin{center}

\url{https://github.com/ivogt161/abeliansurfaces}
\end{center}
The \path{README.md} file contains detailed instructions on its use. This repository also contains other scripts in both Sage and Magma \cite{magma} useful for verifying some of the results of this work; any filenames used in the sequel will refer to the above repository.

\subsection*{Outline of this paper}
  In Section~\ref{S:preliminaries},  
  we begin by reviewing the properties of the characteristic polynomials of Frobenius with a view towards computational aspects. 
   We also recall the classification of maximal subgroups of $\GSp_4(\ff_\ell)$. In Section~\ref{S:finite_list}, we explain Algorithm~\ref{A:combinedPart1algo} and establish Theorem \ref{T:mainthm}\eqref{part1}; that is, for each of the maximal subgroups of $\GSp_4(\ff_\ell)$ listed in Section~\ref{S:classification}, we generate a list of primes that provably contains all primes $\ell$ for which  the mod $\ell$ image of Galois is contained in this maximal subgroup. Theorem~\ref{T:mainbound} is proved in subsection~\ref{ssec:mainbound}. 
 In Section~\ref{S:test_ell}, we first prove a group-theoretic criterion (Proposition~\ref{P:grpthy}) for a subgroup of $\GSp_4(\ff_\ell)$ to equal $\GSp_4(\ff_\ell)$. Then, for each $\ell$ in the finite list from Section~\ref{S:finite_list}, we ascertain whether the characteristic polynomials of the Frobenius elements sampled satisfy the group-theoretic criterion; Theorem \ref{T:mainthm}\eqref{part2} and Theorem~\ref{T:Bbound} also follow from this study.
In Section~\ref{S:confidence} we prove Theorem~\ref{T:Bprob} concerning the probability of output error, assuming that Frobenius elements are uniformly distributed in $\rho_{A,\ell}(G_\qq)$. Finally, in Section~\ref{S:examples}, we close with remarks concerning the execution of the algorithm on the large dataset of genus $2$ curves mentioned above, and highlight some interesting examples that arose therein.

\ack{
This work was started at a workshop held remotely `at' the Institute for Computational and Experimental Research in Mathematics (ICERM) in Providence, RI, in May 2020, and was supported by a grant from the Simons Foundation (546235) for the collaboration `Arithmetic Geometry, Number Theory, and Computation'. It has also been supported by the National Science Foundation under Grant No. DMS-1929284 while the authors were in residence at ICERM during a Collaborate@ICERM project held in May 2022. I.V.'s work was partially supported by NSF grants DMS-1902743 and DMS-2200655.

We are grateful to Noam Elkies for providing interesting examples of genus $2$ curves in the literature, Davide Lombardo for helpful discussions related to computing geometric endomorphism rings, and to Andrew Sutherland for providing a dataset of Hecke characteristic polynomials that were used for executing our algorithm on all typical genus $2$ curves in the LMFDB, as well as making available the larger dataset of approximately $2$ million curves that we ran our algorithm on.
}

\section{Preliminaries}\label{S:preliminaries}

\subsection{Notation}\label{S:Notation}
Let $A$ be an abelian variety of dimension $g$ defined over $\qq$. Associated to $A$ is a positive integer $N = N_A$ called the \defi{conductor} (see e.g. \cite[Section 2]{brumer_kramer_1994}). We write $N_{\sq}$ for the largest integer such that $N_{\sq}^2 \mid N$.

Let $\ell$ be a prime. The $\ell$-adic Tate module \(T_\ell A \simeq \varprojlim A[\ell^n]\) of $A$ is a free $\zz_\ell$-module of rank $2g$.
For each prime $p$, we write $\Frob_p \in \Gal(\overline{\qq}/\qq)$ for an absolute Frobenius element associated to $p$. By a \defi{good prime} $p$ for an abelian variety $A$, we mean a prime $p$ for which $A$ has good reduction, or equivalently $p \nmid N_A$. If $p$ is a good prime for $A$, then the trace $a_p$ of the action of $\Frob_p$ on $T_\ell A$ is an integer. See Section~\ref{S:intcharpoly} for a discussion of the characteristic polynomial of Frobenius.

An abelian variety $A$ with geometric endomorphism ring $\mathbb{Z}$ is called \defi{typical}. A typical genus $2$ curve is a nice curve whose Jacobian is a typical abelian surface.

Let $V$ be a $4$-dimensional vector space over $\ff_\ell$ endowed with a nondegenerate skew-symmetric bilinear form $\langle\cdot,\cdot\rangle$. 
A subspace \(W \subseteq V\) is called \defi{isotropic} (for \(\langle\cdot,\cdot\rangle\)) if \(\langle w_1, w_2\rangle = 0\) for all \(w_1, w_2 \in W\).  A subspace \(W \subseteq V\) is called \defi{nondegenerate} (for \(\langle\cdot,\cdot\rangle\)) if \(\langle \cdot, \cdot \rangle\) restricts to a nondegenerate form on \(W\).
The \defi{general symplectic group} of $(V,\langle\cdot,\cdot\rangle)$ is the subgroup of \(\operatorname{GL}(V)\) defined by
\[ \GSp(V,\langle\cdot,\cdot\rangle) \colonequals \{M : \exists \  \simi(M) \in \mathbb{F}_\ell^\times : \langle Mv, Mw \rangle = \simi(M) \langle v,w \rangle \ \forall \ v,w \in V\}. \]

The map $M \mapsto \simi(M)$ is a surjective homomorphism from $\GSp(V,\langle\cdot,\cdot\rangle)$ to $\ff_\ell^\times$ called the \defi{similitude character}; its kernel is the \defi{symplectic group}, denoted $\Sp(V,\langle\cdot,\cdot\rangle)$. Usually the bilinear form is understood from context, and we drop $\langle\cdot,\cdot\rangle$ from the notation.

By a \defi{subquotient} $W$ of a Galois module $U$, we mean a Galois module $W$ that admits a surjection $U' \twoheadrightarrow W$ from a subrepresentation $U'$ of $U$. 

Since we are chiefly concerned with the sets $\LikelyNonsurjectivePrimes(C; B)$ and $\PossiblyNonsurjectivePrimes(C)$ for a fixed curve $C$, we will henceforth, for ease of notation, drop the $C$ from the notation for these sets.

\subsection{Integral characteristic polynomial of Frobenius}\label{S:intcharpoly} The theoretical result underlying the whole approach is the following.

\begin{thm}[{Weil, see \cite[Theorem 3]{SerreTate}}] \label{integrality}
Let $A$ be an abelian variety of dimension $g$ defined over $\qq$ and let $p$ be a prime of good reduction for $A$. There exists a monic integral polynomial $P_p(t) \in \zz[t]$ of degree $2g$ with constant coefficient $p^g$ such that for any $\ell \neq p$, the polynomial $P_p(t)$ is the characteristic polynomial of the action of $\Frob_p$ on $T_\ell A$.  Every root of \(P_p(t)\) has complex absolute value \(p^{1/2}\).
\end{thm}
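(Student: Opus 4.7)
\begin{skproof}
The plan is to reduce the statement to the corresponding fact over $\ff_p$ and then deduce the three assertions (integrality and $\ell$-independence; constant term $p^g$; the Riemann Hypothesis bound). Since $p$ is a prime of good reduction, the N\'eron model of $A$ has special fiber an abelian variety $\widetilde{A}/\ff_p$ of the same dimension $g$. By the criterion of N\'eron--Ogg--Shafarevich, the inertia subgroup at $p$ acts trivially on $T_\ell A$ for every $\ell \neq p$, so $\Frob_p$ has a well-defined action on $T_\ell A$, and the reduction isomorphism $T_\ell A \xrightarrow{\sim} T_\ell \widetilde{A}$ intertwines this action with that of the geometric Frobenius $\pi \in \End(\widetilde{A})$ on $T_\ell \widetilde{A}$. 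I would then \emph{define} $P_p(t)$ to be the characteristic polynomial of $\pi$ acting on $V_\ell \widetilde{A} \colonequals T_\ell \widetilde{A} \otimes_{\zz_\ell} \qq_\ell$.

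Next, I would establish integrality and $\ell$-independence via point counting. The Lefschetz trace formula (or, classically, a direct computation using $V_\ell \widetilde{A} \cong H^1_{\text{\'et}}(\widetilde{A}_{\bar\ff_p}, \qq_\ell)^\vee$ together with $H^i_{\text{\'et}}(\widetilde{A}_{\bar\ff_p}, \qq_\ell) \cong \wedge^i H^1$) yields
\[
\#\widetilde{A}(\ff_{p^r}) \;=\; \det\bigl(1 - \pi^r \,\big|\, V_\ell \widetilde{A}\bigr) \;=\; \prod_{j=1}^{2g}(1-\alpha_j^r),
\]
where $\alpha_1,\dots,\alpha_{2g}$ are the eigenvalues of $\pi$ on $V_\ell \widetilde{A}$. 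Since the left-hand side is a positive integer independent of $\ell$, Newton's identities let one recover the power sums $\sum \alpha_j^r$, and then the elementary symmetric functions, as rational integers independent of $\ell$. This gives that $P_p(t) \in \zz[t]$ is monic of degree $2g$ and independent of $\ell$. The constant coefficient $P_p(0) = \prod \alpha_j = \det(\pi \,|\, V_\ell \widetilde{A})$ is computed by identifying $\wedge^{2g} V_\ell \widetilde{A} \cong \qq_\ell(g)$ via the Weil pairing: $\pi$ acts on the Tate twist $\qq_\ell(g)$ as multiplication by $p^g$, so the constant term equals $p^g$.

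The heart of the matter---and the main obstacle---is the Riemann Hypothesis assertion that each $\alpha_j$ has complex absolute value $p^{1/2}$. For this I would invoke Weil's theory of the Rosati involution. Fix a polarization $\lambda$ on $\widetilde{A}$ and let $\dagger$ denote the associated Rosati involution on $\End^0(\widetilde{A}) \colonequals \End(\widetilde{A}) \otimes \qq$. Two inputs are needed: first, that the Frobenius endomorphism satisfies $\pi \pi^\dagger = p$ in $\End^0(\widetilde{A})$, which is a direct computation identifying $\pi^\dagger$ with the Verschiebung $V$ via $\pi V = V\pi = p$ on $\widetilde{A}$; and second, that the Rosati involution is \emph{positive}, i.e.\ the bilinear form $(\varphi,\psi) \mapsto \tr(\varphi \psi^\dagger)$ on $\End^0(\widetilde{A})$ is positive definite (Weil's positivity theorem, proved via intersection theory on $\widetilde{A} \times \widetilde{A}$ and the Hodge index theorem for divisors). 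Applying positivity to the powers $\pi^n$, one deduces that the polynomial $P_p(t)$ has only real, nonnegative values on the $\tr(\pi^n (\pi^\dagger)^n) = \sum |\alpha_j|^{2n}$, and a standard averaging argument (comparing $\sum |\alpha_j|^{2n}$ with $2g \cdot p^n$) forces $|\alpha_j|^2 = p$ for every $j$. Assembling these three pieces gives the theorem.
\end{skproof}
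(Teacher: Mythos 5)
The paper does not prove this statement; it is quoted as a classical theorem of Weil and attributed to \cite[Theorem 3]{SerreTate}, so there is no internal argument to compare against. Your sketch follows the standard Weil-style route and identifies the right ingredients: passage to the special fiber via the N\'eron model and the N\'eron--Ogg--Shafarevich criterion, a degree/point-counting argument for integrality and $\ell$-independence, the Weil pairing for the constant term, and Rosati positivity for the Riemann Hypothesis bound.

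The Riemann Hypothesis step as written is circular. You assert that $\tr\bigl(\pi^n(\pi^\dagger)^n\bigr) = \sum_j |\alpha_j|^{2n}$ and then ``compare with $2g\cdot p^n$.'' But since $\pi\pi^\dagger = p$ is a central scalar (so $\pi$ and $\pi^\dagger$ commute), one has $\pi^n(\pi^\dagger)^n = p^n$, whence the left-hand side equals $2g\,p^n$ for free; and on $V_\ell\widetilde{A}$ the eigenvalues of $\pi^\dagger = p\pi^{-1}$ are $p/\alpha_j$, so the eigenvalues of $\pi^n(\pi^\dagger)^n$ are $\alpha_j^n(p/\alpha_j)^n = p^n$ as well, again giving $2g\,p^n$ and no information. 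Equating this with $\sum_j|\alpha_j|^{2n}$ silently assumes $\bar\alpha_j = p/\alpha_j$, which is exactly what is to be proved; as it stands, positivity of the Rosati involution is invoked but never actually used. The positivity has to enter differently: for instance, use positive-definiteness of $(x,y)\mapsto\tr(xy^\dagger)$ on the commutative semisimple algebra $\qq[\pi]$ to conclude (via the Albert classification, or directly) that each factor field is totally real or CM with $\dagger$ acting as complex conjugation, so that $\pi\pi^\dagger = p$ forces $|\sigma(\pi)|^2 = p$ for every complex embedding $\sigma$; or run the quadratic-form argument showing $\deg(a + b\pi) = \prod_j(a+b\alpha_j) \geq 0$ for all $a,b\in\zz$. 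A smaller imprecision: the Newton's-identities route from $\prod_j(1-\alpha_j^r)\in\zz$ is not quite a proof; the standard argument is to note that $P_p(n) = \deg(n\cdot\mathrm{id} - \pi)$ is a manifestly $\ell$-independent integer for all $n\in\zz$, which pins down $P_p \in \qq[t]$ by interpolation, and then to intersect $\zz_\ell[t]$ over varying $\ell$ (or use that the $\alpha_j$ are algebraic integers) to land in $\zz[t]$.
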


 The polynomials $P_p(t)$ are computationally accessible by counting points on $C$ over $\ff_{p^r}$, $r = 1,2$. See \cite[Chapter~7]{poonen} for more details. In fact, $P_p(t)$ can be accessed via the \path{frobenius_polynomial} command in Sage. In particular, we denote the trace of Frobenius by $a_p$. 
By the Grothendieck-Lefschetz trace formula, if $A = \Jac{C}$, $p$ is a prime of good reduction for $C$, and $\lambda_1,\ldots,\lambda_{2g}$ are the roots of $P_p(t)$, then \( \# C(\ff_{p^r}) = p^r+1-\sum_{i=1}^{2g} \lambda_i^r. \)

\subsection{The Weil pairing and consequences on the characteristic polynomial of Frobenius}\label{S:WeilPairing}
The nondegenerate Weil pairing gives an isomorphism of Galois modules:
\begin{equation}\label{E:WeilpairingSim} T_\ell A \simeq \left(T_\ell A\right)^\vee \otimes_{\zz_\ell} \zz_\ell(1).\end{equation}
The Galois character acting on $\zz_\ell(1)$ is the \defi{$\ell$-adic cyclotomic character}, which we denote by $\cyc_\ell$.  
The integral characteristic polynomial for the action of $\Frob_p$ on $\zz_\ell(1)$ is simply $t-p$.  The integral characteristic polynomial for the action of $\Frob_p$ on $\left(T_\ell A\right)^\vee$ is the reversed polynomial
\[P^\vee_p(t) = P_p(1/t)\cdot t^{2g}/p^g \]
whose roots are the inverses of the roots of $P_p(t)$.

We now record a few easily verifiable consequences 
of the nondegeneracy of the Weil pairing when $\dim(A) = 2$.

\begin{lem}\label{L:charpolygen}\hfill
\begin{enumerate}[\upshape (i)]
\item The roots of $P_p(t)$ come in pairs that multiply out to $p$. In particular, $P_p(t)$ has no root with multiplicity $3$.
\item\label{L:charpolyshape} $P_p(t)=t^4-a_pt^3+b_pt^2-pa_pt+p^2$ for some $a_p,b_p\in \zz$.

\item\label{L:tracezero_charpolygen} 
If the trace of an element of $\GSpFL$ is $0 \mod \ell$, then its characteristic polynomial is reducible modulo $\ell$. In particular, this applies to $P_p(t)$ when \(a_p \equiv 0 \bmod{\ell}\).
\item\label{easy_necessary} If $A[\ell]$ is a reducible $G_{\mathbb{Q}}$-module, then $P_p(t)$ is reducible modulo $\ell$.
\end{enumerate}
\end{lem}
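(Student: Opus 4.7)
The plan is to extract everything from the Weil pairing isomorphism $T_\ell A \cong (T_\ell A)^\vee \otimes \zz_\ell(1)$ of \eqref{E:WeilpairingSim}; parts (i) and (iii) carry the real content, and parts (ii) and (iv) then follow mechanically.

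For \emph{part (i)}, I would compare characteristic polynomials of $\Frob_p$ on the two sides of \eqref{E:WeilpairingSim}. The eigenvalues of $\Frob_p$ on $(T_\ell A)^\vee$ are the $1/\lambda_i$, and tensoring with $\zz_\ell(1)$ multiplies each by $p$, so the multiset of roots of $P_p(t)$ is invariant under the involution $\lambda \mapsto p/\lambda$ --- exactly the pairing statement. For the multiplicity-$3$ claim, if $\lambda$ had multiplicity at least $3$, then so would $p/\lambda$, forcing $\lambda = p/\lambda$, i.e.\ $\lambda = \pm\sqrt{p}$. Since $p$ is prime, $\pm\sqrt{p}$ are Galois conjugate over $\qq$ and hence appear in the integral polynomial $P_p(t)$ with equal multiplicity, so both have multiplicity at least $3$ --- contradicting $\deg P_p = 4$. \emph{Part (ii)} is then immediate by Vieta: grouping the roots into pairs $\{\lambda, p/\lambda\}$ allows one to write $P_p(t) = (t^2 - \sigma_1 t + p)(t^2 - \sigma_2 t + p)$, and expanding shows the coefficient of $t$ equals $-p$ times the coefficient of $t^3$, which is the desired shape.

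The substance of \emph{part (iii)} is purely linear-algebraic. First, for an abstract $M \in \GSp_4(\ff_\ell)$ with $\simi(M) = \mu$, the nondegenerate alternating form gives an $\langle M \rangle$-equivariant isomorphism $V \cong V^\vee \otimes \chi_\mu$, so the eigenvalues of $M$ over $\bar{\ff}_\ell$ satisfy $\{\lambda_i\} = \{\mu/\lambda_i\}$; the same Vieta computation as in (ii) then forces the characteristic polynomial of $M$ to take the form $t^4 - at^3 + bt^2 - \mu a t + \mu^2$. Setting $a \equiv 0 \bmod \ell$ reduces this to $f(t) = t^4 + bt^2 + \mu^2$, and the key step --- really the only genuinely nontrivial point in the whole lemma --- is to show that $f$ factors over $\ff_\ell$ unconditionally. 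I would argue by cases on whether $b^2 - 4\mu^2$ is a square in $\ff_\ell$: if it is, then $y^2 + by + \mu^2$ splits as $(y-\alpha)(y-\beta)$ and $f(t) = (t^2 - \alpha)(t^2 - \beta)$; if it is not, then since $(2\mu - b)(-2\mu - b) = b^2 - 4\mu^2$ is a nonzero nonsquare, exactly one of $\pm 2\mu - b$ must be a nonzero square $c^2$, yielding the factorization $f(t) = (t^2 + ct \pm \mu)(t^2 - ct \pm \mu)$ with signs matching. The ``in particular'' is then automatic, since $\rho_{A,\ell}(\Frob_p)$ lies in $\GSp_4(\ff_\ell)$ with trace $a_p \bmod \ell$ and characteristic polynomial $P_p(t) \bmod \ell$.

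Finally, \emph{part (iv)} is routine: if $W \subsetneq A[\ell]$ is a nontrivial $G_\qq$-invariant subspace, then $\rho_{A,\ell}(\Frob_p)$ preserves $W$, so in a compatible basis its matrix is block upper triangular, and $P_p(t) \bmod \ell$ --- the characteristic polynomial of $\rho_{A,\ell}(\Frob_p)$ --- factors as the product of the characteristic polynomials on $W$ and $A[\ell]/W$.
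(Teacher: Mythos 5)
Your arguments for parts (i), (ii), and (iv) are correct and follow the same route as the paper (the paper simply calls (i), (ii), and (iv) ``immediate'' from the Weil pairing and from Theorem~\ref{integrality}, so you have essentially filled in the details).

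For part (iii), your approach is genuinely different from the paper's and, in the main case, arguably cleaner. The paper reduces to $t^4 + bt^2 + c^2$, disposes of the repeated-root case via the discriminant, cites Carlitz's theorem for the remaining case when $\ell \neq 2$, and handles $\ell = 2$ by a direct enumeration. Your direct factorization of $t^4 + bt^2 + \mu^2$ — split as $(t^2-\alpha)(t^2-\beta)$ when $b^2-4\mu^2$ is a square, and otherwise exploit the fact that exactly one of $\pm 2\mu - b$ is a nonzero square to write the quartic as a product of two quadratics — avoids any external citation and covers both the separable and inseparable cases uniformly. That is a real simplification.

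However, your argument has a genuine gap at $\ell = 2$, which the lemma does cover (the statement is about $\GSp_4(\ff_\ell)$ for arbitrary $\ell$, and the ``in particular'' clause with $a_p \equiv 0 \bmod 2$ is a nontrivial condition). Both branches of your case analysis silently assume $\ell$ is odd: in Case~1 you invoke ``discriminant is a square $\Rightarrow$ the quadratic $y^2+by+\mu^2$ splits,'' which is false in characteristic $2$ (e.g.\ $y^2+y+1$ over $\ff_2$ has square discriminant $1$ but is irreducible); and in Case~2 the two quantities $2\mu - b$ and $-2\mu - b$ collapse to the same element when $\ell = 2$, so the ``exactly one is a square'' dichotomy degenerates. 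Concretely, over $\ff_2$ with $\mu = 1$ and $b = 1$, the polynomial is $t^4 + t^2 + 1$, and your Case~1 argument does not produce the factorization $(t^2+t+1)^2$. You would need to append the short direct check at $\ell = 2$ (as the paper does: the characteristic polynomial is either $(t+1)^4$ or $(t^2+t+1)^2$), or note explicitly that you are taking $\ell > 2$ and treat $\ell=2$ separately.
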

\begin{proof}
Parts (i) and (ii) are immediate from the fact that the non-degenerate Weil pairing allows us to pair up the four roots of $P_p(t)$ into two pairs that each multiply out to $p$. 

For part (iii), suppose that \(M \in \GSpFL\) has $\tr(M) = 0$. Then the characteristic polynomial $P_M(t)$ of $M$ is of the form $t^4+bt^2+c^2$. When the discriminant of $P_M$ is $0$ modulo $\ell$, the polynomial $P_M$ has repeated roots and is hence reducible. 
So assume that the discriminant of $P_M$ is nonzero modulo $\ell$. When $\ell \neq 2$, the result follows from \cite[Theorem~1]{carlitz}. 
When $\ell = 2$, a direct computation shows that the characteristic polynomial of a trace $0$ element of $\GSp_4(\ff_2)$ is either $(t+1)^4$ or $(t^2+t+1)^2$, which are both reducible.

Part (iv) is immediate from Theorem \ref{integrality} since $P_p(t) \mod \ell$ by definition is the characteristic polynomial for the action of $\Frob{p}$ on $A[\ell]$. 
\end{proof}

\subsection{\boldmath Maximal subgroups of \texorpdfstring{$\GSp_4(\ff_\ell)$}{Gsp4Fell}} \label{S:classification} 

Mitchell \cite{Mitchell} classified the maximal subgroups of $\PSp_4(\ff_\ell)$ in 1914. This can be used to deduce the following classification of maximal subgroups of \(\GSp_4(\ff_\ell)\) with surjective similitude character.

\begin{lem}[Mitchell] \label{lem:max_subgroups} 
Let $V$ be a $4$-dimensional $\ff_\ell$-vector space endowed with a nondegenerate skew-symmetric bilinear form $\omega$. Then any proper subgroup $G$ of $\operatorname{GSp}(V,\omega)$ with surjective similitude character is contained in one of the following types of maximal subgroups. 
\begin{enumerate}[\upshape (1)]
    \item\label{max:red} \textbf{Reducible maximal subgroups} \hfill
    
    \begin{enumerate}[\upshape (a)]
        \item\label{max:1+3} Stabilizer of a 1-dimensional isotropic subspace for $\omega$.
        \item\label{max:2+2} Stabilizer of a 2-dimensional isotropic subspace for $\omega$.
    \end{enumerate}
    
   \item\label{max:irred} \textbf{Irreducible subgroups governed by a quadratic character}\hfill

    Normalizer $G_\ell$ of the group \(M_\ell\) that preserves each summand in a direct sum decomposition \(V_1 \oplus V_2\) of $V$ into two $2$-dimensional subspaces, where \(V_1\) and \(V_2\) are jointly defined over \(\ff_\ell\) and either:

    \begin{enumerate}[\upshape (a)]
    \item\label{max:congruence} both nondegenerate for $\omega$; or
    \item\label{max:quadric} both isotropic for $\omega$.
     \end{enumerate}
     Moreover, $M_\ell$ is an index $2$ subgroup of $G_\ell$.

\item\label{max:twistedcubic} \textbf{Stabilizer of a twisted cubic}

$\GL(W)$ acting on \(\Sym^3W \simeq V\), where \(W\) is a dimension \(2\) \(\ff_\ell\)-vector space.

\item\label{max:exceptional} \textbf{Exceptional subgroups} See Table~\ref{tab:exceptional} for explicit generators for the groups described below.

    \begin{enumerate}[\upshape (a)]
        \item\label{max:G1920} When $\ell \equiv \pm 3 \bmod{8}$: group with image \(G_{1920}\) in \(\PGSp(V, \omega)\) of order $1920$.
        \item\label{max:G720} When $\ell \equiv \pm 5 \bmod{12}$ and \(\ell \neq 7\): group with image \(G_{720}\) in \(\PGSp(V, \omega)\) of order $720$.
        \item\label{max:G5040} When \(\ell = 7\): group with image \(G_{5040}\) in \(\PGSp(V, \omega)\) of order $5040$.
    \end{enumerate}
    \end{enumerate}
\end{lem}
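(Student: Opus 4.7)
\emph{Plan.} The strategy is to deduce the stated classification from Mitchell's 1914 classification of the maximal subgroups of $\PSp_4(\ff_\ell)$~\cite{Mitchell}. I would set up the short exact sequence
\[ 1 \to Z \to \GSp(V,\omega) \xrightarrow{\pi} \PGSp(V,\omega) \to 1, \]
where $Z \cong \ff_\ell^\times$ is the center of scalar matrices. Under this projection, $\PSp(V,\omega)$ sits inside $\PGSp(V,\omega)$ as a subgroup of index $2$, with quotient isomorphic to $\ff_\ell^\times/(\ff_\ell^\times)^2$ detected by the similitude character modulo squares (since $\simi(cI) = c^2$ for $cI \in Z$).

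Given a proper subgroup $G \subseteq \GSp(V,\omega)$ with surjective similitude character, I would pass to its image $\overline{G} \colonequals \pi(G)$ in $\PGSp(V,\omega)$. The surjectivity of $\simi|_G$ forces $\overline{G} \not\subseteq \PSp(V,\omega)$, so $H \colonequals \overline{G} \cap \PSp(V,\omega)$ is an index-$2$ subgroup of $\overline{G}$ and is in particular proper in $\PSp(V,\omega)$. Hence $H$ is contained in one of the maximal subgroups catalogued by Mitchell, and $\overline{G}$ is contained in its $\PGSp(V,\omega)$-normalizer.

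Next I would verify that every entry of Mitchell's list is the intersection with $\PSp(V,\omega)$ of a canonically defined subgroup of $\PGSp(V,\omega)$, namely the stabilizer of the relevant geometric configuration: an isotropic line for case~(\ref{max:1+3}), a Lagrangian plane for~(\ref{max:2+2}), a direct sum decomposition into two nondegenerate $2$-planes for~(\ref{max:congruence}), a decomposition into two isotropic $2$-planes for~(\ref{max:quadric}), a twisted cubic in $\pp(V)$ for~(\ref{max:twistedcubic}), and an explicit exceptional configuration for~(\ref{max:exceptional}). Pulling each such stabilizer back along $\pi$ realises it as a proper subgroup of $\GSp(V,\omega)$ whose similitude character is automatically surjective (as the preimage contains all of $Z$), and the list so obtained is exhaustive by construction. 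Thus $G$ is contained in one of the subgroups in the statement.

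The main obstacle is the bookkeeping for the exceptional cases~(\ref{max:exceptional}): the congruence conditions $\ell \equiv \pm 3 \pmod 8$ (for $G_{1920}$), $\ell \equiv \pm 5 \pmod{12}$ with $\ell \neq 7$ (for $G_{720}$), and $\ell = 7$ (for $G_{5040}$) arise in Mitchell's analysis from determining when the corresponding abstract finite group admits a faithful $4$-dimensional projective representation defined over $\ff_\ell$ that preserves the symplectic form, and is not already contained in one of the geometric subgroups of types~(\ref{max:red})--(\ref{max:twistedcubic}). For these I would quote Mitchell directly and use the explicit generators recorded in Table~\ref{tab:exceptional} to check the congruences and to confirm that each lifts to a subgroup of $\GSp(V,\omega)$ with surjective similitude character.
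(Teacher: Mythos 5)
Your plan follows the same route the paper itself takes — it presents this lemma as a deduction from Mitchell's classification of maximal subgroups of $\PSp_4(\ff_\ell)$ without giving a detailed proof, supplying only a remark that isolates the $\PSp$ vs.\ $\PGSp$ subtlety for the exceptional groups. So in spirit your sketch is consistent with the paper. There are, however, two genuine gaps in the argument as you have written it. First, from $H = \overline{G}\cap\PSp(V,\omega)$ being contained in a Mitchell-maximal $M\subset\PSp(V,\omega)$, you conclude that $\overline{G}$ lies in the $\PGSp$-normalizer of $M$. That does not follow: $\overline{G}$ normalizes $H$ (since $H$ has index $2$ in $\overline{G}$), but $H$ may be contained in several maximal subgroups of $\PSp(V,\omega)$, and an element $g\in\overline{G}\setminus\PSp(V,\omega)$ need not send $M$ back to $M$ — it only sends $M$ to another maximal subgroup containing $H$. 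The extreme case $H=\{1\}$, $\overline{G}$ of order two, illustrates the problem. The correct route is to work with a maximal subgroup $M'$ of $\PGSp(V,\omega)$ containing $\overline{G}$ and observe $M'\ne\PSp(V,\omega)$; one then has to argue, essentially following Mitchell/Aschbacher, that such $M'$ are exactly the normalizers in $\PGSp(V,\omega)$ of those maximal subgroups of $\PSp(V,\omega)$ which are left invariant by the outer action of $\PGSp/\PSp\cong\{\pm1\}$ — which, for the exceptional groups, is exactly where the congruence restrictions on $\ell$ enter (this is the content of the paper's remark). Your description of the congruences as coming from ``when the abstract group admits a faithful projective $4$-dimensional representation over $\ff_\ell$'' is not quite the right mechanism: for each relevant $\ell$ the exceptional group always embeds in $\PSp_4(\ff_\ell)$, but it only yields a \emph{maximal} subgroup of $\PGSp_4(\ff_\ell)$ when its image in $\ff_\ell^\times/(\ff_\ell^\times)^2$ under the projective similitude is all of $\{\pm1\}$.

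Second, the claim that the similitude character on the preimage in $\GSp(V,\omega)$ is ``automatically surjective as the preimage contains all of $Z$'' is off: scalars have similitude $c^2$, so containing $Z$ only guarantees the squares. Surjectivity of $\simi$ on the preimage $\pi^{-1}(M')$ is equivalent to $M'\not\subseteq\PSp(V,\omega)$, which is the condition that needs to be checked (and is true for the stabilizers in the list, but must be said). Also, your assertion that $H$ is ``in particular proper in $\PSp(V,\omega)$'' hides a small argument: one needs $\pi(G)\ne\PGSp(V,\omega)$, which is not immediate for a proper $G\subset\GSp(V,\omega)$ in general, but does hold here because $\Sp_4(\ff_\ell)$ is perfect — $\pi(G)=\PGSp$ would force $G\supseteq\Sp_4(\ff_\ell)$, and then surjectivity of $\simi|_G$ would force $G=\GSp(V,\omega)$.
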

\noindent See Lemma~\ref{L:recognizingmaxl} for a detailed description of the groups $G_\ell$ and $M_\ell$ in case ~\eqref{max:irred} above.
\begin{rem}\label{R:labelling}
We have chosen to label the maximal subgroups in the classification using invariant subspaces for the symplectic pairing $\omega$ on $V$, following the more modern account due to Aschbacher (see \cite[Section~3.1]{lombardo2016explicit}; for a more comprehensive treatment see \cite{kleidman_liebeck_1990}). For the convenience of the reader, we record the correspondence between Mitchell's original labels and ours below.

\begin{table}[h]\label{T:maxsubgrouplabels}
\begin{tabular}{|c|c|}
\hline
\textbf{Mitchell's label} & \textbf{Label in Lemma~\ref{lem:max_subgroups}} \\
\hline
Group having an invariant point and plane & \ref{max:1+3} \\ \hline
Group having an invariant parabolic congruence & \ref{max:2+2} \\ \hline
Group having an invariant hyperbolic or elliptic congruence & \ref{max:congruence} \\ \hline
Group having an invariant quadric & \ref{max:quadric} \\ \hline
\end{tabular}
\vspace{3pt}
\caption{Dictionary between maximal subgroup labels in \cite{Dieulefait}/\cite{Mitchell} and Lemma~\ref{lem:max_subgroups}}
\end{table}
\end{rem}

\begin{rem}
 The maximal subgroups in \eqref{max:red} are the analogues of the Borel subgroup of $\GL_2(\ff_\ell)$. The maximal subgroups in \eqref{max:irred} when the two subspaces $V_1,V_2$ in the direct sum decomposition are individually defined over $\ff_{\ell}$ are the analogues of normalizers of the split Cartan subgroup of $\GL_2(\ff_\ell)$. When the two subspaces $V_1,V_2$ are not individually defined over $\ff_\ell$ instead, the maximal subgroups in \eqref{max:irred} are analogues of the normalizers of the non-split Cartan subgroups of $\GL_2(\ff_\ell)$. 
 \end{rem}
 
 \begin{rem}
 We briefly explain why the action of \(\GL_2(\ff_\ell)\) on \(\Sym^3(\ff_\ell^2)\) preserves a nondegenerate symplectic form.  It suffices to show that the restriction to \(\SL_2(\ff_\ell)\) fixes a vector in \(\bigwedge^2\Sym^3(\ff_\ell^2)\).  
 If \(V\) has basis \(\{x,y\}\), then an invariant vector is \(x^3 \wedge y^3 - 3x^2y \wedge xy^2\).
 \end{rem}

\begin{rem}
One can extract explicit generators of the exceptional maximal subgroups from Mitchell's original work\footnote{Mitchell's notation for \(\PGSp_4(\ff_\ell)\) is \(A_\nu(\ell)\) and for \(\PSp_4(\ff_\ell)\) is \(A_1(\ell)\).}. Indeed \cite[the proof of Theorem 8, page~390]{Mitchell} gives four explicit matrices that generate a \(G_{1920}\) (which is unique up to conjugacy in \(\PGSp_4(\ff_\ell)\)). Mitchell's description of the other exceptional groups is in terms of certain projective linear transformations called skew perspectivities attached to a direct sum decomposition \(V = V_1 \oplus V_2\) into \(2\)-dimensional subspaces.  A \defi{skew perspectivity of order \(n\) with axes \(V_1\) and \(V_2\)} is the projective linear transformation that scales \(V_1\) by a primitive \(n\)th root of unity and fixes \(V_2\). This proof also gives the axes of the skew perspectivities of order \(2\) and \(3\) that generate the remaining exceptional groups \cite[pages 390-391]{Mitchell}.  Table~\ref{tab:exceptional}
lists generators of (one representative of the conjugacy class of) each of the exceptional maximal subgroup extracted from Mitchell's descriptions. 
 
In the file \texttt{exceptional.m} publicly available with our code, we  verify that Magma's list of conjugacy classes of maximal subgroups of $\GSp_4(\ff_\ell)$ agree with those described in Lemma~\ref{lem:max_subgroups} for \(3 \leq \ell \leq 47\).
\end{rem}

\begin{rem}
The classification of exceptional maximal subgroups of \(\PSp_4(\ff_\ell)\) is more subtle than that of \(\PGSp_4(\ff_\ell)\), because of the constraint on the similitude character of matrices in \(\PSp_4(\ff_\ell)\).  While the similitude character is not well-defined on \(\PGSp_4(\ff_\ell)\) (multiplication by a scalar \(c \in \ff_\ell^\times\) scales the similitude character by \(c^2\)) it is well-defined modulo squares.  The group \(\PSp_4(\ff_\ell)\) is the kernel of this natural map:

\[ 1 \rightarrow \PSp_4(\ff_\ell) \rightarrow \PGSp_4(\ff_{\ell}) \xrightarrow{\simi} \ff_\ell^\times /(\ff_\ell^\times)^2 \simeq \{\pm 1\} \rightarrow 1. \]

An exceptional subgroup of \(\PGSp_4(\ff_\ell)\) gives rise to an exceptional subgroup of \(\PSp_4(\ff_\ell)\) of either the same size or half the size depending on the image of \(\mult\) restricted to that subgroup, which in turn depends on the congruence class of \(\ell\).  For this reason, the maximal exceptional subgroups of $\PSp_4(\ff_\ell)$ in Mitchell's original classification (also recalled in Dieulefait \cite[Section 2.1]{Dieulefait}) can have order \(1920\) \textit{or} \(960\) and \(720\) \textit{or} \(360\) depending on the congruence class of \(\ell\), and \(2520\) (for \(\ell = 7\)).  Such an exceptional subgroup gives rise to a \textit{maximal} exceptional subgroup of \(\PGSp_4(\ff_\ell)\) only when \(\mult\) is surjective (i.e., its intersection with \(\PSp_4(\ff_\ell)\) has index two), which explains the restricted congruence classes of \(\ell\) for which they arise.
\end{rem}

We now record a lemma that directly follows from the structure of maximal subgroups described above. This lemma will be used in Section~\ref{S:test_ell} to devise a criterion for a subgroup of $\GSp_4(\ff_\ell)$ to be the entire group. For an element $T$ in $ \GSp_4(\ff_\ell)$, let $\tr(T)$, $\midd(T)$, $\simi(T)$ denote the trace of $T$, the middle coefficient of the characteristic polynomial of $T$, and the similitude character applied to $T$ respectively\footnote{Explicitly, the characteristic polynomial of \(T\) is therefore \(t^4 - \tr(T)t^3 + \midd(T)t^2 - \simi(T)\tr(T)t + \simi(T)^2\).}. For a scalar \(\lambda\), we have 
\[ \tr(\lambda T) = \lambda\tr(T), \quad \midd(\lambda T) = \lambda^2\midd(T), \quad \simi(\lambda T) = \lambda^2\simi(T).\]  
Hence the quantities \(\tr(T)^2/\simi(T)\) and \(\midd(T)/\simi(T)\) are well-defined on \(\PGSp_4(\ff_\ell)\).
For $\ell > 2$ and $* \in \{720,1920, 5040\}$, define
\begin{equation}\label{E:precomplist} C_{\ell,*} \colonequals \left\{ \left.\left(\frac{\tr(T)^2}{\simi(T)},\frac{\midd(T)}{\simi(T)}\right) \ \right| \ T \in \textup{ exceptional subgroup of projective order $*$}  \right\} \end{equation}

\begin{lem}\label{L:conseqclass}\hfill
\begin{enumerate}[\upshape (1)]
\item\label{L:easyconseq} In cases~\ref{max:congruence} and \ref{max:quadric} of Lemma \ref{lem:max_subgroups}:
\begin{enumerate}[\upshape (a) ] 
\item\label{L:tracezero_GSp4poly} every element in $G_\ell \setminus M_\ell$ has trace $0$, and,
\item\label{L:parsubgp} the group $M_\ell$ stabilizes a non-trivial linear subspace of $\overline{\ff}_\ell^4$.
\end{enumerate}
\item\label{L:twistedcubic}
 Every element that is contained in a maximal subgroup corresponding to the stabilizer of a twisted cubic has a reducible characteristic polynomial.
\item\label{L:precompjust} For $* \in \{1920,720\}$, the set $C_{\ell,*}$ defined in ~\eqref{E:precomplist} equals the reduction modulo $\ell$ of the elements of the set $C_{*}$ below. 
{\small
\begin{align*} C_{1920}  &= \{(0, -2),(0, -1),(0, 0),(0, 1),(0, 2),(1, 1),(2, 1),(2, 2),(4, 2),(4, 3),(8, 4),(16, 6)\} \\
C_{720} &= \{ (0, 1), (0, 0), (4, 3), (1, 1), (16, 6), (0, 2), (1, 0), (3, 2), (0, -2) \}
\end{align*}
}
We also have
{\small
\[ C_{7,5040} = \{(0, 0), (0, 1), (0, 2), (0, 5), (0, 6), (1, 0), (1, 1), (2, 6), (3, 2), (4, 3), (5, 3), (6, 3) \}. \]
}
\end{enumerate}
\end{lem}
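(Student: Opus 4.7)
The plan is to prove each of the three parts separately, with parts (1) and (2) reducing to direct eigenvalue/block-matrix arguments and part (3) being an explicit finite computation. For part (1)(a), note that since $M_\ell$ is the subgroup of $G_\ell$ preserving each of $V_1$ and $V_2$ and has index 2 in $G_\ell$, any element $g \in G_\ell \setminus M_\ell$ must induce the nontrivial permutation of the unordered pair $\{V_1, V_2\}$, i.e., must swap them. After passing to the (at worst quadratic) extension of $\ff_\ell$ over which the individual $V_i$ are defined and choosing a basis adapted to $V_1 \oplus V_2$, such a $g$ has block anti-diagonal form $\bigl(\begin{smallmatrix} 0 & B \\ A & 0 \end{smallmatrix}\bigr)$, and therefore trace $0$; since trace is invariant under base change, $\tr(g) = 0$ in $\ff_\ell$. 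For part (1)(b), $M_\ell$ preserves each of $V_1, V_2$ by its very definition, and these are $2$-dimensional (hence nonzero and proper) subspaces of $V \otimes_{\ff_\ell} \overline{\ff}_\ell$.

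For part (2), take $g \in \GL(W)$ acting on $\Sym^3 W \simeq V$ and let $\alpha,\beta \in \overline{\ff}_\ell$ be the eigenvalues of $g$. Then the eigenvalues of $g$ on $\Sym^3 W$ are $\alpha^3,\alpha^2\beta,\alpha\beta^2,\beta^3$, and grouping them gives the factorization
\[
\bigl[(t-\alpha^3)(t-\beta^3)\bigr] \cdot \bigl[(t-\alpha^2\beta)(t-\alpha\beta^2)\bigr] = \bigl[t^2 - (\alpha^3+\beta^3)t + (\alpha\beta)^3\bigr]\cdot\bigl[t^2 - \alpha\beta(\alpha+\beta)t + (\alpha\beta)^3\bigr].
\]
The coefficients of each quadratic factor are symmetric polynomials in $\alpha,\beta$, hence polynomials in $\tr(g) = \alpha+\beta$ and $\det(g) = \alpha\beta$, both of which lie in $\ff_\ell$. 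Therefore the characteristic polynomial of $g$ on $\Sym^3 W$ is a product of two quadratics in $\ff_\ell[t]$ and is reducible.

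For part (3), the approach is an entirely explicit finite computation. Using the explicit matrix generators recorded in Table~\ref{tab:exceptional} for (a representative of the conjugacy class of) each exceptional subgroup, one enumerates the resulting finite subgroup of $\GSp_4(\ff_\ell)$ and, for each element $T$, records the pair $(\tr(T)^2/\simi(T), \midd(T)/\simi(T))$, which is well-defined on $\PGSp_4(\ff_\ell)$ by the scaling relations immediately preceding \eqref{E:precomplist}. Because the generators in Table~\ref{tab:exceptional} may be lifted to matrices with entries in a fixed ring of integers (and each invariant is a polynomial expression in the entries), this enumeration can be carried out once over a lift and then reduced modulo $\ell$, which is why the resulting set of pairs is described uniformly by the integer sets $C_{1920}, C_{720}$ and, in the isolated case $\ell = 7$, by the tabulated set $C_{7,5040}$. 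The main obstacle is simply confirming that the enumeration is exhaustive and that no additional invariant pairs arise for special small $\ell$ (in the allowed congruence classes); this is handled by the explicit verification in the file \texttt{exceptional.m} accompanying our code, which cross-checks the computation against Magma's native conjugacy class data for $\GSp_4(\ff_\ell)$ for the relevant small primes.
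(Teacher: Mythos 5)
Your proof is correct and follows essentially the same approach as the paper's: the block-anti-diagonal observation for~(1)(a), the factorization of the characteristic polynomial on $\Sym^3 W$ via symmetric functions of the eigenvalues for~(2), and the lift-then-reduce finite enumeration for~(3). The only small discrepancy is in part~(3): the paper carries out the enumeration in \texttt{exceptional\_statistics.sage} (generating the finite groups over the relevant quadratic number field and then reducing mod~$\ell$), whereas the file \texttt{exceptional.m} you cite is used elsewhere in the paper for a different sanity check, namely cross-checking Mitchell's maximal-subgroup list against Magma's for small~$\ell$; the underlying idea you describe, however, is the paper's idea.
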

\begin{proof}\hfill
\begin{enumerate}
    \item In cases~\ref{max:congruence} and \ref{max:quadric} of Lemma~\ref{lem:max_subgroups}, since any element of the normalizer $G_\ell$ that is not in $M_{\ell}$ switches elements in the two subspaces $V_1$ and $V_2$ (i.e. maps elements in the subspace $V_1$ in the decomposition $V_1 \oplus V_2$ to elements in $V_2$ and vice-versa), it follows that any element in $G_\ell \setminus M_\ell$ has trace zero. 
    \item The conjugacy class of maximal subgroups corresponding to the stabilizer of a twisted cubic comes from the embedding $\GL_2(\ff_\ell) \xrightarrow{\iota} \operatorname{GSp}_4(\ff_\ell)$ induced by the natural action of $\GL_2(\ff_\ell)$ on the space of monomials of degree $3$ in $2$ variables. If $M$ is a matrix in $\GL_2(\ff_\ell)$ with eigenvalues $\lambda,\mu$ (possibly repeated), then the eigenvalues of $\iota(M)$ are $\lambda^3,\mu^3,\lambda^2\mu,\lambda\mu^2$ and hence the characteristic polynomial of $\iota(M)$ factors as $(T^2-(\lambda^3+\mu^3)T+\lambda^3\mu^3) (T^2-(\lambda^2 \mu+\lambda\mu^2)T+\lambda^3\mu^3)$ over $\ff_\ell$ which is reducible over $\ff_\ell$.
    \item This follows from the description of the maximal subgroups given  in Table~\ref{tab:exceptional}. 
    Each case (except \(G_{5040}\) that only occurs for \(\ell=7\)) depends on a choice of a root of a quadratic polynomial.  In the associated file \texttt{exceptional\_statistics.sage}, we generate the corresponding finite subgroups over the appropriate quadratic number field to compute \(C_*\).
    It follows that the corresponding values for the subgroup $G_{*}$ in $\GSp_4(\ff_\ell)$ can be obtained by reducing these values modulo $\ell$. 
    Since the group $G_{5040}$ only appears for $\ell=7$, we directly compute the set $C_{7,5040}$. \qedhere
\end{enumerate}
\end{proof}

\begin{rem}
The condition in Lemma~\ref{L:conseqclass}\eqref{L:precompjust} is the analogue of the condition \cite[Proposition 19 (iii)]{Serre72} to rule out exceptional maximal subgroups of \(\GL_2(\ff_\ell)\).
\end{rem}
 
We end this subsection by including the following lemma, to further highlight the similarities between the above classification of maximal subgroups of $\GSp_4(\ff_\ell)$ and the more familiar classification of maximal subgroups of $\GL_2(\ff_{\ell})$. This lemma is not used elsewhere in the article and is thus for expositional purposes only.
 \begin{lem}\label{L:recognizingmaxl} \hfill
  \begin{enumerate}[\upshape (1)]
   \item\label{L:Cfq} The subgroup $M_{\ell}$ in the case \eqref{max:congruence} when the two nondegenerate subspaces $V_1$ and $V_2$ \textbf{are} individually defined over $\ff_{\ell}$ is isomorphic to
   \[ \{ (m_1,m_2) \in \GL_2(\ff_\ell)^2 \ | \ \det(m_1) = \det(m_2) \}. \]
   In particular, the order of $M_{\ell}$ is $\ell^2(\ell-1)(\ell^2-1)^2$. 
   
   \item\label{L:Qfq} The subgroup $M_{\ell}$ in the case \eqref{max:quadric} when the two isotropic subspaces $V_1$ and $V_2$ \textbf{are} individually defined over $\ff_{\ell}$ is isomorphic to 
   \[ \{ (m_1,m_2) \in \GL_2(\ff_\ell)^2 \ | \ m_1^Tm_2 = \lambda I, \textup{ for some } \lambda \in \ff_{\ell}^* \}. \]
   In particular, the order of $M_{\ell}$ is $\ell(\ell-1)^2(\ell^2-1)$.
   
   \item\label{L:Cfq2} The subgroup $M_{\ell}$ in the case \eqref{max:congruence} when the two nondegenerate subspaces $V_1$ and $V_2$ \textbf{are not} individually defined over $\ff_{\ell}$ is isomorphic to 
   \[ \{ m \in \GL_2(\ff_{\ell^2}) \ | \ \det(m) \in \ff_\ell^* \}. \]
   In particular, the order of $M_{\ell}$ is $\ell^2(\ell-1)(\ell^4-1)$.
   
   \item\label{L:Qfq2} The subgroup $M_{\ell}$ in the case \eqref{max:quadric} when the two isotropic subspaces $V_1$ and $V_2$ \textbf{are not} individually defined over $\ff_{\ell}$ is isomorphic to $\GU_2(\ff_{\ell^2})$, i.e., 
   \[ \{ m \in \GL_2(\ff_{\ell^2}) \ | \ m^T \iota(m) = \lambda I, \textup{ for some } \lambda \in \ff_{\ell}^* \}, \]
   where $\iota$ denotes the natural extension of the Galois automorphism of $\ff_{\ell^2}/\ff_\ell$ to $\GL_2(\ff_{\ell^2})$.
   In particular, the order of $M_{\ell}$ is $\ell(\ell^2-1)^2$.
   
  \end{enumerate}
 \end{lem}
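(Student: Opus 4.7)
The plan is to treat each case by choosing adapted bases and directly translating the two conditions defining $M_\ell$ (stabilizing each summand and preserving $\omega$ with a common multiplier) into matrix equations. In all four cases, an element $g \in M_\ell$ is determined by its restrictions to $V_1$ and $V_2$ (over $\ff_\ell$ in parts \eqref{L:Cfq}, \eqref{L:Qfq} and over $\ff_{\ell^2}$ in parts \eqref{L:Cfq2}, \eqref{L:Qfq2}), so the work is to write down the symplectic multiplier condition explicitly and count.

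For part \eqref{L:Cfq}, since each $V_i$ is nondegenerate two-dimensional, the form $\omega|_{V_i}$ agrees with the determinant form up to scaling, so any $m_i \in \GL(V_i)$ scales it by $\det(m_i)$. Hence $(m_1, m_2) \in M_\ell$ if and only if $\det(m_1) = \det(m_2)$, with the common value equal to $\simi(g)$. The order count then follows from the fact that $\det \colon \GL_2(\ff_\ell) \to \ff_\ell^\times$ has fibers of size $|\SL_2(\ff_\ell)| = \ell(\ell^2-1)$, and there are $\ell-1$ possible common determinants. For part \eqref{L:Qfq}, I choose dual bases $\{e_1,e_2\}$ of $V_1$ and $\{f_1,f_2\}$ of $V_2$ with $\omega(e_i, f_j) = \delta_{ij}$ (possible because $V_1, V_2$ are isotropic complements paired nondegenerately). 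Writing $g$ as $(m_1, m_2)$ in these bases, a direct computation gives $\omega(m_1 v, m_2 w) = v^T m_1^T m_2 w$, so the multiplier condition becomes $m_1^T m_2 = \lambda I$. Given $m_1 \in \GL_2(\ff_\ell)$ and $\lambda \in \ff_\ell^\times$, the matrix $m_2 = \lambda (m_1^T)^{-1}$ is determined, producing $|\GL_2(\ff_\ell)| \cdot (\ell-1) = \ell(\ell-1)^2(\ell^2-1)$ elements.

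The non-rational cases \eqref{L:Cfq2} and \eqref{L:Qfq2} reduce to the previous two after base change to $\ff_{\ell^2}$, with the extra requirement of $\ff_\ell$-rationality. Here $V_1$ and $V_2$ are Galois-conjugate subspaces of $V \otimes \ff_{\ell^2}$, so an element of $M_\ell$ is given by a single matrix $m \in \GL_2(\ff_{\ell^2}) \cong \GL(V_1)$, with its Galois conjugate $\iota(m)$ acting on $V_2$. In case \eqref{L:Cfq2}, the common-determinant condition from \eqref{L:Cfq} becomes $\det(m) = \iota(\det(m))$, i.e., $\det(m) \in \ff_\ell^\times$; the order follows from $|\SL_2(\ff_{\ell^2})| = \ell^2(\ell^4-1)$ combined with the $\ell-1$ values of the determinant. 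In case \eqref{L:Qfq2}, the condition $m_1^T m_2 = \lambda I$ from \eqref{L:Qfq} becomes $m^T \iota(m) = \lambda I$ with $\lambda \in \ff_\ell^\times$, which is precisely the definition of $\GU_2(\ff_{\ell^2})$, whose order $\ell(\ell^2-1)^2$ is standard.

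The main subtlety is bookkeeping in the non-rational cases: one must verify that the pair $(m, \iota(m))$ acting on $V_1 \oplus V_2$ descends to an $\ff_\ell$-linear transformation of $V$, and that the symplectic multiplier necessarily lies in $\ff_\ell^\times$ rather than $\ff_{\ell^2}^\times$. Both follow from the fact that $V$ and $\omega$ are defined over $\ff_\ell$: Galois acts on the multiplier by $\lambda \mapsto \iota(\lambda)$, forcing $\lambda \in \ff_\ell^\times$. The rest is routine verification that the stated subgroups are indeed stabilizers in the sense of case \eqref{max:irred}.
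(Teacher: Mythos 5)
Your proposal is correct and follows essentially the same route as the paper: fix an adapted basis for the decomposition $V_1 \oplus V_2$, translate the condition ``preserves $\omega$ up to a common multiplier'' into an explicit matrix identity ($\det m_1 = \det m_2$ in the nondegenerate case, $m_1^T m_2 = \lambda I$ in the isotropic case), and in the non-rational cases set $m_2 = \iota(m_1)$ and impose $\ff_\ell$-rationality. The paper reaches the normal form for $\omega$ by quoting Witt's theorem and then computing $(m_1,m_2)^T J (m_1,m_2) = \lambda J$, whereas you observe directly that an alternating form on a $2$-dimensional nondegenerate space is a scalar multiple of the determinant form (part~\eqref{L:Cfq}) and use $\omega$-dual bases (part~\eqref{L:Qfq}); these are just two phrasings of the same calculation. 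Your explicit note that Galois-equivariance forces the multiplier $\lambda$ into $\ff_\ell^\times$ in parts~\eqref{L:Cfq2}--\eqref{L:Qfq2} is a small refinement the paper leaves implicit, and the order counts all check out.
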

\begin{proof}
Given a direct sum decomposition $V_1 \oplus V_2$ of a vector space $V$ over $\ff_q$, we get a natural embedding of $\Aut(V_1) \times \Aut(V_2)$ ($\cong \GL_2(\ff_q)^2$) into $\Aut(V)$ ($\cong \GL_4(\ff_q)$), whose image consists of automorphisms that preserve this direct sum decomposition. We will henceforth refer to elements of $\Aut(V_1) \times \Aut(V_2)$ as elements of $\Aut(V)$ using this embedding. To understand the subgroup $M_{\ell}$ of $\GSp_4(\ff_q)$ in cases~\eqref{L:Cfq} and \eqref{L:Qfq} where the two subspaces in the direct sum decomposition are individually defined over $\ff_q$, we need to further impose the condition that the automorphisms in the image of the map $\Aut(V_1) \times \Aut(V_2) \rightarrow \Aut(V)$ preserve the symplectic form $\omega$ on $V$ up to a scalar.

In \eqref{L:Cfq}, without any loss of generality, the two nondegenerate subspaces $V_1$ and $V_2$ can be chosen to be orthogonal complements under the nondegenerate pairing $\omega$, and so by Witt's theorem, in a suitable basis for $V_1 \oplus V_2$ obtained by concatenating a basis of $V_1$ and a basis of $V_2$, the nondegenerate symplectic pairing $\omega$ has the following block-diagonal shape:

 \[ J \colonequals
\begin{bmatrix}
0  & 1  & & \\
-1 & 0 & & \\
   &   & 0  & 1 \\
   &   & -1 & 0 \\
\end{bmatrix}.
 \]

 The condition that an element $(m_1,m_2) \in \Aut(V_1) \oplus \Aut(V_2)$ preserves the symplectic pairing up to a similitude factor of $\lambda$ is the condition $(m_1,m_2)^T J (m_1,m_2) = \lambda J$, which boils down to $\det(m_1) = \lambda = \det(m_2)$. 
 
 Similarly, in \eqref{L:Qfq}, without any loss of generality, by Witt's theorem, in a suitable basis for $V_1 \oplus V_2$ obtained by concatenating a basis of the isotropic subspace $V_1$ and a basis of the isotropic subspace $V_2$, the nondegenerate symplectic pairing $\omega$ has the following block-diagonal shape. 
 
 \[ J \colonequals
\begin{bmatrix}
& & 0  & 1 \\
& & 1 & 0 \\
0  & -1 & & \\
-1 & 0 & & \\
\end{bmatrix}.
 \]
 
The condition that an element $(m_1,m_2) \in \Aut(V_1) \oplus \Aut(V_2)$ preserves the symplectic pairing up to a similitude factor of $\lambda$ is the condition $(m_1,m_2)^T J (m_1,m_2) = \lambda J$, which again boils down to $m_1^T m_2 = \lambda I$.

If we have a subspace $W$ defined over $\ff_{q^2}$ but not defined over $\ff_q$, and we let $\overline{W}$ denote the conjugate subspace and further assume that $W \oplus \overline{W}$ gives a direct sum decomposition of $V$, then we get a natural embedding of $\Aut(W)$ ($\cong \GL_2(\ff_{q^2})$) into $\Aut(V)$ ($\cong \GL_4(\ff_q)$) whose image consists of automorphisms that commute with the natural involution of $V \otimes \ff_{q^2}$ induced by the Galois automorphism of $\ff_{q^2}$ over $\ff_q$. The proofs of cases ~\eqref{L:Cfq2} and ~\eqref{L:Qfq2} are analogous to the cases ~\eqref{L:Cfq} and ~\eqref{L:Qfq} respectively, by using the direct sum decomposition $W \oplus \overline{W}$ and letting $m_2 = \iota(m_1)$. The condition that $\det(m_1) = \det(m_2)$ in ~\eqref{L:Cfq} becomes the condition $\det(m_1) = \det(m_2) =  \det{\overline{m_1}} = \overline{\det(m_1)}$, or equivalently, that $\det(m_1) \in \ff_q$ in ~\eqref{L:Cfq2}. Similarly, the condition that $m_1^Tm_2 = \lambda I$ in ~\eqref{L:Qfq} becomes the condition that $m_1^T \iota(m_1) = \lambda I$ in ~\eqref{L:Qfq2}. \qedhere
\end{proof}

\subsection{Image of inertia and (tame) fundamental characters}\label{S:inertiaimage}

Dieulefait \cite{Dieulefait} used Mitchell's work described in the previous subsection to classify the maximal subgroups of $\GSp_4(\ff_\ell)$ that could occur as the image of $\rho_{A,\ell}$ . This was achieved via an application of a fundamental result of Serre and Raynaud that strongly constrains the action of inertia at $\ell$, and which we now recall.

Fix a prime $\ell > 3$ that does not divide the conductor $N$ of $A$. Let $I_\ell$ be an inertia subgroup at $\ell$. Let \(\psi_n \colon I_\ell \to \ff_{\ell^n}^\times\) denote a (tame) fundamental character of level \(n\).  The \(n\) Galois-conjugate fundamental characters \(\psi_{n,1}, \dots, \psi_{n,n}\) of level \(n\) are given by \(\psi_{n,i} \colonequals \psi_n^{\ell^i}\). Recall that the fundamental character of level \(1\) is simply the mod \(\ell\) cyclotomic character $\cyc_\ell$, and that the product of all fundamental characters of a given level is the cyclotomic character.

\begin{thm}[Serre \cite{Serre72}, Raynaud \cite{Raynaud74}, cf. Theorem 2.1 in \cite{Dieulefait}]\label{T:serre-raynaud}
Let \(\ell\) be a semistable prime for \(A\).
Let \(V/\ff_\ell\) be an \(n\)-dimensional Jordan--H\"older factor of the \(I_\ell\)-module \(A[\ell]\).  Then \(V\) admits a $1$-dimensional \(\ff_{\ell^n}\)-vector space structure such that \(\rho_{A, \ell}|_{I_\ell}\) acts on \(V\) via the character
\[ \psi_{n,1}^{d_1}\cdots\psi_{n,n}^{d_n}  \]
with each $d_i$ equal to either $0$ or $1$.
\end{thm}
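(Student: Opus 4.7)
The plan is to combine the structure theory of semistable abelian varieties over a local field with Raynaud's classification of finite flat commutative group schemes of type $(\ell,\dots,\ell)$ over a base of small absolute ramification; the former reduces the problem to the case of good reduction, and the latter handles that case directly.

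Since $\ell$ is a prime of semistable reduction for $A$, the Raynaud uniformization presents $A_{\qq_\ell}$ as a rigid analytic quotient $\tilde{A}/L$, where $\tilde{A}$ is an extension of an abelian variety $B$ with good reduction by a torus $T$, and $L$ is a Galois-stable lattice of rank $\dim T$ on which $I_\ell$ acts trivially. Pulling back the $\ell$-torsion yields an $I_\ell$-stable three-step filtration on $A[\ell]$ with successive graded pieces $T[\ell]\cong \mu_\ell^{\dim T}$, $B[\ell]$, and $L/\ell L$. Every Jordan--H\"older factor of $A[\ell]$ as an $I_\ell$-module therefore comes from one of these three pieces. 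The toric part contributes only the cyclotomic factor, on which $I_\ell$ acts by $\cyc_\ell = \psi_{1,1}$, and the lattice part contributes only trivial factors; both are already of the asserted form with $n = 1$ and $d_1 \in \{0,1\}$. This reduces the problem to analyzing the Jordan--H\"older factors of $B[\ell]$.

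For this substantive case I would invoke Raynaud's theorem on finite flat commutative group schemes killed by $\ell$ over a complete discrete valuation ring of residue characteristic $\ell$ and absolute ramification index $e$. Since $B$ has good reduction, $B[\ell]$ extends to such a group scheme $\mathcal{G}$ over $\zz_\ell$, for which $e = 1 < \ell - 1$ (indeed for every $\ell \geq 3$, and certainly under our hypothesis $\ell > 3$). Raynaud's theorem then asserts that any simple $I_\ell$-subquotient of $\mathcal{G}(\overline{\qq}_\ell)$ of $\ff_\ell$-dimension $n$ carries a one-dimensional $\ff_{\ell^n}$-vector space structure on which $I_\ell$ acts via a character $\prod_{i=1}^{n} \psi_{n,i}^{d_i}$ with each $d_i \in \{0, 1, \dots, e\} = \{0,1\}$, which is exactly the conclusion.

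The main obstacle, were one to write a fully self-contained proof rather than a sketch, is Raynaud's theorem itself, which rests on the classification of finite flat commutative group schemes of type $(\ell,\dots,\ell)$ via their Honda systems. Since the statement we are sketching is classical, I would simply invoke \cite{Raynaud74} as a black box. The remainder is bookkeeping: matching the three graded pieces of the Raynaud uniformization filtration against the general formula and observing that the trivial and cyclotomic characters are just the $n = 1$ specializations with $d_1 = 0$ and $d_1 = 1$ respectively.
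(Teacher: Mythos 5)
The paper does not actually prove Theorem~\ref{T:serre-raynaud}; it states it as a classical result attributed to Serre and Raynaud and cites Dieulefait's Theorem~2.1 for this formulation. So there is no in-text proof for your argument to be compared against. Your proposed proof is the standard route to this statement, and it is essentially correct: Raynaud uniformization reduces a semistable prime to the three-step filtration $T[\ell] \subset \tilde{A}[\ell] \subset A[\ell]$ with graded pieces $T[\ell]$, $B[\ell]$, $L/\ell L$, and Raynaud's theorem on group schemes of type $(\ell,\dots,\ell)$ over $\zz_\ell$ ($e=1 < \ell-1$) handles the abelian piece.

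Two places where a careful write-up should say a bit more. First, you assert that $I_\ell$ acts trivially on $L$ and that $T[\ell] \cong \mu_\ell^{\dim T}$ as $I_\ell$-modules; the justification is that the torus occurring in the special fibre of the N\'eron model is a torus over $\ff_\ell$, hence splits over an unramified extension, so its (co)character lattice carries only unramified Galois action, and $T[\ell] \cong X_*(T) \otimes \mu_\ell$ is then cyclotomic for $I_\ell$. Without this observation, it is not clear why the graded pieces from the toric and lattice parts have $I_\ell$-action given exactly by $\cyc_\ell$ and the trivial character. Second, when you apply Raynaud's theorem, the cleanest logic is: because $e = 1 < \ell - 1$, the generic-fibre functor from finite flat $\zz_\ell$-group schemes killed by $\ell$ to $\ff_\ell[G_{\qq_\ell}]$-modules is fully faithful, so any Jordan--H\"older filtration of the $I_\ell$-module $B[\ell]$ is induced by a filtration by closed flat subgroup schemes of the N\'eron $\ell$-torsion, and the simple subquotient group schemes are then exactly the objects Raynaud classifies. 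This step is what lets you match $I_\ell$-module composition factors with Raynaud's simple $\ff_{\ell^n}$-vector space schemes; as written, the argument jumps from "$B[\ell]$ extends to $\mathcal{G}$" to a statement about all $I_\ell$-subquotients of $\mathcal{G}(\overline{\qq}_\ell)$ without flagging why Raynaud's result about simple group schemes applies to each one. Neither issue is a genuine gap so much as a missing sentence, and the overall strategy is the one underlying the original sources.
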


On the other hand, the following fundamental result of Grothendieck constrains the action of inertia at semistable primes \(p \neq \ell\).

\begin{thm}[{Grothendieck \cite[Expos\'e IX, Prop 3.5]{SGA7-I}}]\label{T:grothendieck}
Let \(A\) be an abelian variety over a number field \(K\).  Then \(A\) has semistable reduction at \(\mathfrak{p} \) not above \(\ell\) if and only if the action of \(I_{\mathfrak{p}} \subset G_K\) on \(T_\ell A\) is $2$-step unipotent (i.e. $(\rho_{A,\ell}(g) - I)^2 = 0$ for all $g \in I_{\mathfrak{p}}$).
\end{thm}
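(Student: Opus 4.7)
The plan is to reduce the statement to an analysis of the Néron model of $A$ at $\mathfrak{p}$, together with the semistable reduction theorem of Grothendieck which supplies the existence of a finite Galois extension $L/K$ such that $A_L$ acquires semistable reduction. Write $\mathcal{A}$ for the Néron model of $A$ over $\mathcal{O}_{K,\mathfrak{p}}$ and $\mathcal{A}^0_s$ for the identity component of its special fiber; by Chevalley, $\mathcal{A}^0_s$ is an extension of an abelian variety $B$ by a linear group $L = T \times U$ with $T$ a torus and $U$ unipotent, and semistability is by definition the vanishing of $U$.

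For the forward implication, assume $A$ is semistable at $\mathfrak{p}$, so $U = 0$. The first step is to identify, via the Néron mapping property, the $I_{\mathfrak{p}}$-invariants $(T_\ell A)^{I_{\mathfrak{p}}}$ with the Tate module $T_\ell \mathcal{A}^0_s$. Using the short exact sequence
\[ 0 \to T_\ell T \to T_\ell \mathcal{A}^0_s \to T_\ell B \to 0,\]
the fact that $T_\ell T$ is unramified (it is a tensor of cyclotomic characters times the character lattice, but inertia acts trivially because the cocharacter lattice is a trivial Galois module here, giving a factor absorbed into the cyclotomic character appearing only in the Tate twist), and the fact that $B$ has good reduction so $T_\ell B$ is unramified, shows that inertia acts trivially on both graded pieces of this subrepresentation. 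The orthogonality theorem of Grothendieck then identifies the quotient $T_\ell A / (T_\ell A)^{I_{\mathfrak{p}}}$ with $\operatorname{Hom}(T_\ell T, \zz_\ell(1))$ via the Weil pairing, which is again unramified. Thus $T_\ell A$ carries a two-step filtration with unramified graded pieces, and so $(\rho_{A,\ell}(g)-I)^2 = 0$ for every $g \in I_{\mathfrak{p}}$.

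For the reverse implication, suppose $I_{\mathfrak{p}}$ acts 2-step unipotently on $T_\ell A$. By Grothendieck's semistable reduction theorem, choose a finite Galois extension $L_{\mathfrak{P}}/K_{\mathfrak{p}}$ over which $A$ becomes semistable, and let $I_L \subset I_{\mathfrak{p}}$ be the corresponding open subgroup. Two reductions are needed. First, the wild inertia $P_{\mathfrak{p}}$ is pro-$p$ with $p \neq \ell$, so its image in $\operatorname{GL}(T_\ell A)$ is a finite $\ell$-adic unipotent subgroup; in characteristic zero such a group is torsion-free, hence $P_{\mathfrak{p}}$ acts trivially. Second, the tame quotient $I_{\mathfrak{p}}/P_{\mathfrak{p}}$ is procyclic, generated by a lift of a topological generator $\sigma$, and unipotence of $\rho_{A,\ell}(\sigma)$ rules out any action through a finite cyclic character of order coprime to $\ell$. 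Combining this with the forward direction applied to $A_L$, the filtration constructed there already descends to $K_{\mathfrak{p}}$, and identifying its graded pieces over $K_{\mathfrak{p}}$ shows that no unipotent part $U$ can appear in the Chevalley decomposition of $\mathcal{A}^0_s$, yielding semistability.

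The main obstacle will be the reverse direction: the forward direction is essentially a bookkeeping exercise with the Néron model and the Weil pairing, but the converse requires knowing a priori that some potential semistability theorem is available and then descending the semistable structure. The cleanest way to handle the descent is to combine the monodromy theorem (which makes $I_L$ unipotent after a finite extension) with the observation that the hypothesis upgrades this to unipotence on all of $I_{\mathfrak{p}}$, so no additional ramification can be present in the quasi-unipotent quotient. This step is where the input of $\ell \neq \operatorname{char} k(\mathfrak{p})$ is genuinely used.
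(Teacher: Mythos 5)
The paper does not prove this statement at all: it is quoted verbatim from Grothendieck, SGA~7~I, Expos\'e~IX, Prop.~3.5, and used as a black box. So there is no in-paper proof against which to compare your argument; what you have attempted is a from-scratch proof of Grothendieck's semistable reduction criterion, which is a substantial theorem in its own right.

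Taken on its own terms, your sketch of the forward implication (semistable implies $2$-step unipotent inertia) is essentially the standard argument: identify $(T_\ell A)^{I_{\mathfrak{p}}}$ with $T_\ell \mathcal{A}^0_s$ via the N\'eron mapping property, observe that the toric and abelian graded pieces are unramified since everything lives over the residue field and $\ell \neq p$, and invoke the orthogonality theorem to control the top quotient. The parenthetical about the cocharacter lattice and cyclotomic twist is garbled, but the conclusion there is correct.

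The reverse implication, however, has a genuine gap, and it is exactly at the crux. You reduce to tame inertia and observe that $\rho_{A,\ell}(\sigma)$ is unipotent rather than merely quasi-unipotent, which is fine, but the final sentence --- ``identifying its graded pieces over $K_{\mathfrak{p}}$ shows that no unipotent part $U$ can appear in the Chevalley decomposition of $\mathcal{A}^0_s$'' --- asserts the conclusion without an argument. Nothing you have said explains how to pass from a statement about the Galois module $T_\ell A$ back to the geometry of the N\'eron special fibre. The actual proof at this point is a rank computation: one uses that $\operatorname{rank}_{\zz_\ell}(T_\ell A)^{I_{\mathfrak{p}}} = 2\dim B + \dim T$ together with the orthogonality theorem relating $(T_\ell A)^{I_{\mathfrak{p}}}$ and the toric part of $T_\ell A^\vee$, and the self-duality supplied by a polarization, to force $\dim U = 0$. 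That dimension count is the content of the hard direction and is missing from your proposal. There is also a circularity risk you do not address: you invoke the potential semistable reduction theorem to produce the extension $L/K$, but in SGA~7 that theorem is established alongside (and in some treatments via) the very criterion you are trying to prove, so one would need to specify an independent source (e.g.\ Mumford's uniformization or an argument through the $\ell$-adic monodromy theorem) to avoid begging the question.
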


Combining these two results allows one fine control of the determinant of a subquotient of $A[\ell]$; this will be used in Section~\ref{S:finite_list}.

\begin{cor}\label{C:subqtdet} 
Let $A/\qq$ be an abelian surface, and let \(X_\ell\) be a Jordan--H\"older factor of the \(\overline{\ff}_\ell[G_\qq]\)-module \(A[\ell]\otimes\overline{\ff}_\ell\).
If $\ell$ is a semistable prime, then 
\[\det X_\ell \simeq \epsilon \cdot \cyc_\ell^x\]
for some character $\epsilon \colon G_{\qq} \to \bar{\ff}_{\ell}$ that is unramified at $\ell$ and some $0 \leq x \leq \dim X_{\ell}$.
Moreover, \(\epsilon^{120} = 1.\)
\end{cor}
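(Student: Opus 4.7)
The plan is to control $\det X_\ell$ separately on inertia at $\ell$ and at other primes. First I will restrict to $I_\ell$ and apply Theorem~\ref{T:serre-raynaud} to $A[\ell]$: it describes the JH factors of the $I_\ell$-module $A[\ell]$ as $n$-dimensional over $\ff_\ell$ but $1$-dimensional over $\ff_{\ell^n}$, acted on via a character $\psi_{n,1}^{d_1} \cdots \psi_{n,n}^{d_n}$ with each $d_i \in \{0,1\}$, equivalently $\psi_n^e$ for $e = \sum_i d_i \ell^{i-1}$. Extending scalars to $\bar{\ff}_\ell$, such a factor decomposes as the direct sum of the $n$ Galois-conjugate characters $\psi_n^{e}, \psi_n^{e\ell}, \ldots, \psi_n^{e\ell^{n-1}}$. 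Since $X_\ell$ is a sub-quotient of the $G_\qq$-module $A[\ell]\otimes\bar{\ff}_\ell$, every $\bar{\ff}_\ell$-JH character of $X_\ell|_{I_\ell}$ is one of these.

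Next I will exploit that $X_\ell$ is a genuine $G_\qq$-module, so the multiset of $\bar{\ff}_\ell$-JH characters of $X_\ell|_{I_\ell}$ is invariant under the $\Frob_\ell$-action $\chi \mapsto \chi^\ell$ on tame characters. Grouping into $\Frob$-orbits and multiplying over a full length-$n$ orbit gives
\[ \prod_{i=0}^{n-1} \psi_n^{e\ell^i} \;=\; \psi_n^{e(1+\ell+\cdots+\ell^{n-1})} \;=\; \psi_1^e \;=\; \cyc_\ell^{\sum_i d_i},\]
using $\psi_1 = \cyc_\ell$ and that the sum of base-$\ell$ digits of $e$ equals $e \bmod (\ell-1)$. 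Summing weights across orbits yields $\det X_\ell|_{I_\ell} = \cyc_\ell^x$, where $x$ is the total weight. Each orbit of length $n$ contributes $n$ to $\dim X_\ell$ and at most $n$ to the weight, so $0 \le x \le \dim X_\ell$. Consequently $\epsilon \colonequals \det X_\ell \cdot \cyc_\ell^{-x}$ is a character of $G_\qq$ that is trivial on $I_\ell$, i.e., unramified at $\ell$.

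For the final claim $\epsilon^{120} = 1$, I will analyze the ramification of $\epsilon$ at each prime $p \neq \ell$. At any $p$ where $A$ has semistable reduction, Theorem~\ref{T:grothendieck} forces $I_p$ to act unipotently on $T_\ell A$, hence on the sub-quotient $X_\ell$; thus $\det X_\ell|_{I_p}$ is trivial and $\epsilon$ is unramified at such $p$. At a prime $p$ of additive reduction, the image of $I_p$ on $A[\ell]$ is a finite extension of a pro-$\ell$ unipotent radical by a finite tame quotient, and one knows from the classification of possible inertia types for abelian surfaces that this tame quotient is cyclic of order dividing $120$. Since $\epsilon$ is a $1$-dimensional character, $\epsilon|_{I_p}$ factors through this cyclic quotient, so $\epsilon|_{I_p}^{120} = 1$ for every $p$. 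Combined with triviality on $I_\ell$, this forces $\epsilon^{120}$ to be an everywhere-unramified character of $G_\qq$, hence trivial.

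The main obstacle in carrying out the plan is the genus-$2$-specific input in the final step: bounding the order of the tame inertia action on $A[\ell]$ at a prime of additive reduction by $120$. The rest of the argument is a direct assembly of Theorems~\ref{T:serre-raynaud} and \ref{T:grothendieck} together with the $\Frob$-invariance of the $I_\ell$-character multiset. By contrast, pinning down the constant $120$ requires invoking the classification of additive reduction types for abelian surfaces (or an equivalent bound on the degree of the minimal extension of a local field over which a dimension-$2$ abelian variety acquires semistable reduction), and this is where a precise reference must be cited.
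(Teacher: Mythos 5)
Your proposal follows essentially the same route as the paper. For the first assertion the paper simply says it "follows immediately from Theorem~\ref{T:serre-raynaud}"; your unpacking via the $\Frob_\ell$-invariance of the multiset of $\bar{\ff}_\ell$-characters of $X_\ell|_{I_\ell}$ (so that they group into full Frobenius orbits, each orbit multiplying to $\cyc_\ell^{\sum d_i}$ with $\sum d_i$ at most the orbit length) is a correct and somewhat more careful account of the same argument; it is exactly what one needs to get the bound $0 \le x \le \dim X_\ell$, and not just that $\det X_\ell|_{I_\ell}$ is \emph{some} power of $\cyc_\ell$.

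For $\epsilon^{120}=1$, your prime-by-prime strategy (unramified at $\ell$ by construction, at semistable $p$ by Grothendieck's criterion, order dividing $120$ at additive $p$) is the paper's strategy as well. The input you flag as needing a reference is supplied in the paper by \cite[Theorem 7.2]{larson_vaintrob_2014}: every abelian surface over $\qq$ acquires semistable reduction over an extension $K/\qq$ of degree dividing $120$. Granting this, the paper's phrasing is slightly cleaner than yours: for $\mathfrak{p} \mid p$ in $K$, $I_\mathfrak{p}$ acts $2$-step unipotently and $[I_p : I_\mathfrak{p}]$ divides $[K:\qq] = 120$, so $\sigma^{120}\in I_\mathfrak{p}$ for every $\sigma\in I_p$ and $\det X_\ell(\sigma^{120}) = 1$. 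Two small inaccuracies in your version, neither of which damages the argument: the finite quotient $\rho_{A,\ell}(I_p)/\rho_{A,\ell}(I_\mathfrak{p})$ need not be \emph{tame} or \emph{cyclic} when $p \in \{2,3,5\}$ (it can pick up wild, non-cyclic ramification from $K/\qq$); what you actually use, and what is true, is only that its \emph{order} divides $120$, which already forces $\epsilon|_{I_p}^{120}=1$.
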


\begin{proof}
The first part follows immediately from Theorem~\ref{T:serre-raynaud}. 
For the fact that \(\epsilon^{120} = 1\), we will show that \(\epsilon^{120}\) is unramified everywhere; the result will then follow from the fact that there are no nontrivial unramified characters of \(G_\qq\). Since $\epsilon$ is unramifed at $\ell$, so too is \(\epsilon^{120}\), so it suffices to show that \(\epsilon^{120}\) is unramified at primes $p \neq \ell$. From \cite[Theorem 7.2]{larson_vaintrob_2014} we know that every abelian surface attains semistable reduction over an extension \(K/\qq\) with \([K:\qq]\) dividing \(120\); therefore by Theorem~\ref{T:grothendieck} we have that the action of $I_\mathfrak{p} \subset G_K$ on $T_\ell A$ is $2$-step unipotent for any prime $\mathfrak{p} \mid p$ of \(K\). Hence the action of the 120th power of any element of $I_p$ is unipotent, and thus has trivial determinant.  
\end{proof}

We can now state Dieulefait's classification of maximal subgroups of \(\GSp_4(\ff_\ell)\) that can occur as the image \(\rho_{A, \ell}(G_\qq)\) for a semistable prime \(\ell > 7\).

\begin{prop}[{\cite{Dieulefait}}]\label{prop:max_semistable}
Let \(A\) be the Jacobian of a genus \(2\) curve defined over \(\qq\) with Weil pairing \(\omega\) on \(A[\ell]\).  If \(\ell > 7\) is a semistable prime, then \(\rho_{A, \ell}(G_\qq)\) is either all of \(\GSp(A[\ell], \omega) \) or it is contained in one of the maximal subgroups of Types \eqref{max:red} or \eqref{max:irred} in Lemma~\ref{lem:max_subgroups}.
\end{prop}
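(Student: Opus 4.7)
The goal is to rule out the maximal subgroup types~\eqref{max:twistedcubic} (twisted cubic stabilizer) and \eqref{max:exceptional} (exceptional subgroups) from Lemma~\ref{lem:max_subgroups} under the hypothesis that $\ell > 7$ is semistable for $A$.  The key tool is the rigid control on the action of $I_\ell$ furnished by Theorem~\ref{T:serre-raynaud}, combined with the determinantal constraint on Jordan--H\"older factors from Corollary~\ref{C:subqtdet}.  Throughout I use that the similitude character of $\rho_{A, \ell}$ equals $\cyc_\ell$, in particular it is surjective onto $\ff_\ell^\times$.

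For Type~\eqref{max:twistedcubic}: assuming $\rho_{A, \ell}(G_\qq)$ lies in the image of $\GL_2(\ff_\ell)$ via the $\Sym^3$ embedding, there exists a two-dimensional $\overline{\ff}_\ell$-representation $W$ of $G_\qq$ with $A[\ell] \otimes \overline{\ff}_\ell \simeq \Sym^3 W$.  Restrict to $I_\ell$ and split into two cases.  If $W|_{I_\ell}$ is reducible, write its characters as $\cyc_\ell^a \epsilon_1$ and $\cyc_\ell^b \epsilon_2$ for unramified twists $\epsilon_i$ whose orders are bounded via Corollary~\ref{C:subqtdet}; the characters on the four one-dimensional Jordan--H\"older factors of $\Sym^3 W$ are then $\cyc_\ell^{3a}$, $\cyc_\ell^{2a+b}$, $\cyc_\ell^{a+2b}$, $\cyc_\ell^{3b}$ (times unramified pieces), and Theorem~\ref{T:serre-raynaud} forces each exponent to lie in $\{0,1\} \bmod (\ell-1)$.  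A finite case analysis on $(a,b)$ together with $6(a+b) \equiv 2 \pmod{\ell-1}$ (coming from $\det A[\ell] = \cyc_\ell^2$) forces $\ell - 1$ to divide a fixed small integer, excluding $\ell > 7$.  If instead $W|_{I_\ell}$ is irreducible, it carries the level-$2$ characters $\psi_2^s, \psi_2^{s\ell}$, and $\Sym^3 W$ decomposes into two Galois-conjugate pairs with exponents $\{3s, 3s\ell\}$ and $\{2s+s\ell, s+2s\ell\}$; but Theorem~\ref{T:serre-raynaud} requires the exponents appearing on a two-dimensional JH factor to be of the shape $d_1 + d_2\ell$ with $d_i \in \{0,1\}$, and for $\ell > 3$ the exponent $3s$ is never of this form modulo $\ell^2-1$ for any admissible $s$, a contradiction.

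For Type~\eqref{max:exceptional}: the image $\rho_{A, \ell}(G_\qq)$ would lie in a subgroup $\widetilde G_\ast \subset \GSp_4(\ff_\ell)$ whose projective image $G_\ast \in \{G_{1920}, G_{720}, G_{5040}\}$ has size bounded independently of $\ell$.  Since the similitude character $\mult \circ \rho_{A, \ell} = \cyc_\ell$ surjects onto $\ff_\ell^\times$ of order $\ell-1$, the subgroup $\widetilde G_\ast$ contains the full scalar group and has order $|G_\ast| \cdot (\ell-1)/\delta$ for a small integer $\delta$.  The tame inertia image $\rho_{A, \ell}(I_\ell^{\mathrm{tame}})$ is cyclic, and by Theorem~\ref{T:serre-raynaud} a topological generator has eigenvalues of order divisible by $(\ell-1)/\gcd(120, \ell-1)$ in the level-$1$ case and by a correspondingly large divisor of $\ell^2-1$ in the level-$2$ case.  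On the other hand, the orders of the non-scalar projective elements of $G_\ast$ are bounded by the exponent of $G_\ast$ (at most $12$), so the cyclic subgroup of $\widetilde G_\ast$ generated by tame inertia cannot accommodate such large-order semisimple elements once $\ell > 7$; a short check against the admissible congruence classes listed in Lemma~\ref{lem:max_subgroups} completes the argument.

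The main obstacle is the bookkeeping in the reducible sub-case of Type~\eqref{max:twistedcubic}: one must enumerate the pairs $(a,b) \in (\zz/(\ell-1))^2$ consistent with the Serre--Raynaud constraints on all four twisted exponents while also being compatible with $\det A[\ell] = \cyc_\ell^2$ up to the bounded unramified twist from Corollary~\ref{C:subqtdet}.  This is a finite computation that is carried out in detail by Dieulefait; the remaining steps are essentially mechanical given the classification in Lemma~\ref{lem:max_subgroups}.
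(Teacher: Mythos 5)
The paper does not actually give a proof of this proposition; it is stated as Dieulefait's result, with \cite[Proposition~3.15]{lombardo2016explicit} cited in the surrounding text as an expanded exposition of the twisted-cubic case. So there is no in-paper argument to compare against. Your reconstruction follows the same general strategy as Dieulefait and Lombardo (constrain the action of $I_\ell$ via Theorem~\ref{T:serre-raynaud}, then analyze each excluded maximal-subgroup type), which is the right approach.

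That said, a few details are off or need more care. In the exceptional case, you invoke $(\ell-1)/\gcd(120,\ell-1)$ as a lower bound on an eigenvalue order, presumably imported from the $\epsilon^{120}=1$ conclusion of Corollary~\ref{C:subqtdet}; but that $\epsilon$ is \emph{unramified at $\ell$}, so it is trivial on $I_\ell$ and plays no role here. On $I_\ell$ the level-one eigenvalues are genuine powers $\cyc_\ell^{d_i}$ with $d_i\in\{0,1\}$, and the projective image of a topological generator of tame inertia in fact has order exactly $\ell-1$ (level-one case) or a divisor of $\ell+1$ determined by the $d_i$ (level-two case); the $\gcd$ should not appear. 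More importantly, simply bounding by the projective exponent $\exp(G_{1920})=120$ only rules out $\ell>121$; reaching $\ell>7$ requires bounding the actual maximal cyclic-subgroup order in $G_{1920}$, $G_{720}$ (both much smaller than the exponent), treating the level-two inertia case, and cross-referencing the congruence restrictions in Lemma~\ref{lem:max_subgroups}\eqref{max:exceptional}. You gesture at ``a short check,'' but this is where the real content lies and where the claimed numerical threshold $\ell>7$ is actually earned.

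In the twisted-cubic case, writing the $I_\ell$-characters of $W$ as $\cyc_\ell^a\epsilon_1$ and $\cyc_\ell^b\epsilon_2$ presupposes they are level-one tame characters; this is the ``split over $\ff_\ell$'' case, not the ``reducible over $\overline{\ff}_\ell$'' case as you phrase it (over $\overline{\ff}_\ell$, tame inertia always acts diagonalizably). The correct dichotomy is between both characters being powers of $\cyc_\ell$ (level one) and the two characters being Galois-conjugate powers of $\psi_2$ (level two), and your second sub-case does implicitly cover the latter. The subsequent exponent bookkeeping using $d_i\in\{0,1\}$ and $\det A[\ell]=\cyc_\ell^2$ is sound in outline, and the finite case analysis you defer does indeed reduce to small $\ell$. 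So the plan is essentially correct and faithful to the cited source, but the exceptional case in particular needs the element-order and congruence checks spelled out before it could be called a proof.
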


See also \cite[Proposition~3.15]{lombardo2016explicit} for an expanded exposition of why the image of $G_\qq$ cannot be contained in maximal subgroup of Type~\eqref{max:twistedcubic} for a semistable prime \(\ell > 7\).

\begin{rem}\label{R:ExcBound}However, if $\ell$ is a prime of additive reduction, or if $\ell \leq 7$, then the image of $G_\qq$ may also be contained in any of the four types of maximal subgroups described in Lemma~\ref{lem:max_subgroups}. Nevertheless, by \cite[Theorem 6.6]{LombaroVerzobio}, for any prime \(\ell > 24\), we have that the exponent of the projective image is bounded
\(\exp(\pp \rho_{A, \ell}) \geq (\ell - 1)/12 \). Since \(\exp(G_{1920}) = 2 \exp(S_6) = 120\) and \(\exp(G_{720}) = \exp(S_5) = 60\), the exceptional maximal subgroups cannot occur as \(\rho_{A,\ell}(G_\qq)\) for \(\ell > 1441\).
\end{rem}

\subsection{A consequence of the Chebotarev density theorem}\label{S:Chebotarev} Let $K/\qq$ be a finite Galois extension with Galois group $G = \Gal(K/\qq)$ and absolute discriminant $d_K$. Let $S \subseteq G$ be a nonempty subset that is closed under conjugation. By the Chebotarev density theorem, we know that
\begin{equation} \label{E:CDT}
\lim_{x \to \infty} \frac{|\{p \leq x : p \text{ is unramified in } K \text{ and } \Frob_p \in S \}|}{|\{p \leq x\}|} = \frac{|S|}{|G|}.
\end{equation}
Let $p$ be the least prime such that $p$ is unramified in $K$ and $\Frob_p \in S$. There are effective versions of the Chebotarev density theorem that give bounds on  $p$.  The best known unconditional bounds are polynomials in $d_K$ \cite{MR553223, MR3986919,MR4472459}.
Under GRH, the best known bounds are polynomials in $\log d_K$. In particular Bach and Sorenson \cite{MR1355006} showed that under GRH, 
\begin{equation} \label{ECDT-GRH}
p \leq (4 \log d_K + 2.5 [K:\qq] + 5)^2.
\end{equation}

The present goal is to give an effective version of the Chebotarev density theorem in the context of abelian surfaces. We will use a corollary of \eqref{ECDT-GRH} that is noted in \cite{MaWa2020} which allows for the avoidance of a prescribed set of primes by taking a quadratic extension of $K$. We do this because we will take $K = \qq(A[\ell])$, and $p$ being unramified in $K$ is not sufficient to imply that $p$ is a prime of good reduction for $A$. Lastly, we will use that by \cite[Proposition 6]{serre81}, if $K/\qq$ is finite Galois, then
\begin{equation} \label{logdK-bound}
\log d_K \leq ([K:\qq] - 1)\log \rad(d_K) + [K : \qq] \log([K:\qq]),
\end{equation}
where $\rad n = \prod_{p \mid n} p$ denotes the radical of an integer $n$.

\begin{lem}\label{lem:effective_cheb} Let $A/\qq$ be a typical principally polarized abelian surface with conductor \(N_A\). 
Let \(q\) be a prime.
Let $S \subseteq \rho_{A,q}(G_\qq)$ be a nonempty subset that is closed under conjugation. Let $p$ be the least prime of good reduction for $A$ such that $p \neq q$ and $\rho_{A,q}(\Frob_p) \in S$. 
    Assuming GRH, we have 
    \[ p \leq \left( 4\left[ (2q^{11} - 1)\log \rad(2 q N_A) + 22 q^{11} \log(2q) \right] + 5q^{11} + 5  \right)^2.\]
\end{lem}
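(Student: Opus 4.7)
The plan is to apply the Bach--Sorenson effective Chebotarev bound \eqref{ECDT-GRH} to an auxiliary extension of $\qq$. Let $K \colonequals \qq(A[q])$, which is Galois over $\qq$, ramified only at primes dividing $qN_A$, with $\Gal(K/\qq)$ embedded via $\rho_{A,q}$ into $\GSp_4(\ff_q)$. Since $|\GSp_4(\ff_q)| = q^4(q-1)(q^2-1)(q^4-1)$, we get the crude bound $[K:\qq] < q^{11}$. A direct application of Bach--Sorenson to $(K, S)$ would furnish a prime $p$ unramified in $K$ with $\rho_{A,q}(\Frob_p) \in S$, but such $p$ could coincide with $q$ or divide $N_A$, so would not necessarily be good for $A$.

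To rule this out, I would pass to the compositum $K' \colonequals K \cdot \qq(\sqrt{d})$ with $d \colonequals \rad(2qN_A)$. Because $d$ is squarefree and divisible by $2$, every rational prime dividing $2qN_A$ ramifies in $\qq(\sqrt{d})$ and hence in $K'$. Therefore any prime $p$ unramified in $K'$ automatically satisfies $p \neq q$ and $p \nmid N_A$. Let $S' \subseteq \Gal(K'/\qq)$ be the preimage of $S$ under the restriction $\Gal(K'/\qq) \twoheadrightarrow \Gal(K/\qq)$; this is nonempty and closed under conjugation, with density equal to $|S|/|\Gal(K/\qq)|$. Applying \eqref{ECDT-GRH} to $(K', S')$ produces a prime $p$ meeting all the required conditions and bounded by $\bigl(4 \log d_{K'} + 2.5[K':\qq] + 5\bigr)^2$.

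It remains to estimate the two quantities in this bound. First, $[K':\qq] \le 2[K:\qq] < 2q^{11}$, so the linear term contributes at most $5q^{11}$. For the discriminant, Serre's inequality \eqref{logdK-bound} yields
\[\log d_{K'} \le ([K':\qq] - 1)\log \rad(d_{K'}) + [K':\qq]\log [K':\qq].\]
The primes ramifying in $K'$ ramify in either $K$ or $\qq(\sqrt{d})$, so all of them divide $2qN_A$; hence $\rad(d_{K'}) \le \rad(2qN_A)$. Combining this with the elementary estimate $x\log x \le 22 q^{11} \log(2q)$ for $x \le 2q^{11}$ (which reduces to $2\log 2 \le 22 \log 2$) gives
\[\log d_{K'} \le (2q^{11}-1)\log\rad(2qN_A) + 22 q^{11}\log(2q).\]
Substituting into Bach--Sorenson reproduces the stated bound exactly. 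I do not foresee a substantive obstacle: the strategy of twisting by an auxiliary quadratic field to enforce good reduction is standard, and the remaining work is a bookkeeping exercise to ensure the numerical constants in the final expression match.
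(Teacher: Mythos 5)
Your proof is correct and follows essentially the same approach as the paper: pass from $K = \qq(A[q])$ to a quadratic twist by the radical of $2qN_A$ to force bad primes to ramify, then apply Bach--Sorenson together with Serre's discriminant inequality \eqref{logdK-bound} and the degree bound $[K:\qq] \le |\GSp_4(\ff_q)| \le q^{11}$. The only cosmetic differences are that the paper cites \cite[Corollary 6]{MaWa2020} for the quadratic-twist device rather than carrying it out by hand, and takes $m = \rad(2N_A)$ (relying on $q$ already ramifying in $K$ via the cyclotomic determinant) whereas you take $d = \rad(2qN_A)$; both choices yield $\rad(d_{K'}) \le \rad(2qN_A)$ and the same final constant.
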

\begin{proof} Let $K = \qq(A[q])$. Then the image of $\rho_{A,q}$ is $\Gal(K/\qq)$, the extension $K/\qq$ is Galois and 
\begin{equation*} \label{bound-degK} [K:\qq] \leq |\GSp_4(\ff_q)| = q^4 (q^4-1)(q^2-1)(q-1)  \leq q^{11}. \end{equation*}
As $\rad(d_K)$ is the product of primes that ramify in $\qq(A[q])$, the criterion of N\'eron-Ogg-Shafarevich for abelian varieties \cite[Theorem 1]{SerreTate} implies that $\rad (d_K)$ divides $\rad(q N_A)$. Let $\tilde{K} \colonequals K(\sqrt{m})$ where $m \colonequals \rad(2N_A)$. Note that the primes that ramify in $\tilde{K}$ are precisely $2$, $q$, and the primes of bad reduction for $A$. Thus  $\rad(d_{\tilde{K}}) = \rad(2 q N_A)$. Moreover $[\tilde{K}:\qq] \leq 2 q^{11}$ and by  \eqref{logdK-bound},
\begin{equation*} \label{bound-disc} \log(d_{\tilde{K}}) \leq (2q^{11} - 1) \log\rad(2qN_A) + 22 q^{11} \log(2q). \end{equation*}
Applying \cite[Corollary 6]{MaWa2020} to the field $\tilde{K}$, we get that (under GRH) there exists a prime $p$ satisfying the claimed bound, that does not divide $m$, and for which $\rho_{A,q}(\Frob_p) \in S$.
\end{proof}

\section{Finding a finite set containing all nonsurjective primes}\label{S:finite_list}

In this section we describe Algorithm~\ref{A:combinedPart1algo} referenced in Theorem \ref{T:mainthm}\eqref{part1}, which is a reformulation of the algorithm of Dieulefait \cite{Dieulefait} with some modest improvements.  This algorithm produces a finite list $\PossiblyNonsurjectivePrimes$ that provably includes all nonsurjective primes $\ell$. We also prove Theorem~\ref{T:mainbound}.

Since our goal is to produce a finite list (from which we will later remove extraneous primes) it is harmless to include the finitely many bad primes as well as \(2, 3, 5, 7\).
Using Proposition~\ref{prop:max_semistable},
it suffices to find conditions on \(\ell>7\) for which \(\rho_{A, \ell}(G_\qq)\) could be contained in one of the maximal subgroups of type \eqref{max:red} and \eqref{max:irred} in Lemma~\ref{lem:max_subgroups}.  We first find primes $\ell$ for which $\rho_{A,\ell}$ has (geometrically) reducible image (and hence is contained in a maximal subgroup in case \eqref{max:red} of Lemma~\ref{lem:max_subgroups} or in a subgroup \(M_\ell\) in case \eqref{max:irred}). 
To treat the geometrically irreducible cases,
we then make use of the observation from Lemma~\ref{L:conseqclass}~\ref{L:tracezero_GSp4poly} that every element outside of an index \(2\) subgroup has trace \(0\).

\begin{algo}\label{A:combinedPart1algo} Given a typical genus \(2\) curve $C/\qq$  with conductor \(N\) and Jacobian \(A\), compute a finite list \(\PossiblyNonsurjectivePrimes\) of primes as follows.
\begin{enumerate}[\upshape (1)]
    \item Initialize \(\PossiblyNonsurjectivePrimes = [2, 3, 5, 7]\).
    \item\label{step:bad} Add to \(\PossiblyNonsurjectivePrimes\) all primes dividing \(N\).
    \item\label{step:non_abs_irred} Add to \(\PossiblyNonsurjectivePrimes\) the good primes $\ell$
    for which \(\rho_{A, \ell} \otimes \bar{\ff}_{\ell}\) could be reducible via Algorithms \ref{alg:odd}, \ref{alg:related}, and \ref{alg:cuspforms}.
    \item\label{step:quad_char} Add to \(\PossiblyNonsurjectivePrimes\) the good primes $\ell$ for which \(\rho_{A, \ell} \otimes \bar{\ff}_{\ell}\) could be irreducible but nonsurjective via Algorithm \ref{alg:quad_char}.
    \item Return \(\PossiblyNonsurjectivePrimes\).
\end{enumerate}
\end{algo}

At a very high-level, each of the subalgorithms of Algorithm~\ref{A:combinedPart1algo} makes use of a set of \defi{auxiliary good primes} \(p\). We compute the integral characteristic polynomial of Frobenius \(P_p(t)\) and use it to constrain those \(\ell \neq p\) for which the image could have a particular shape.

\begin{rem}
Even though robust methods to compute the conductor $N$ of a genus \(2\) curve are not implemented at the time of writing, the odd-part $N_{ \text{odd}}$ of $N$ can be computed via \href{https://fossies.org/linux/pari/src/functions/elliptic_curves/genus2red}{\texttt{genus2red}} function of PARI and the \href{https://doc.sagemath.org/html/en/reference/arithmetic_curves/sage/interfaces/genus2reduction.html}{\texttt{genus2reduction}} module of SageMath, both based on an algorithm of Liu \cite{liu1994conducteur}. Moreover, \cite[Theorem 6.2]{brumer_kramer_1994} bounds the $2$-exponent of $N$ above by $20$ and hence $N$ can be bounded above by $2^{20} N_{ \text{odd}}$.  While these algorithms can be run only with the bound \(2^{20} N_{\text{odd}}\), doing so substantially increases the run-time of the limiting Algorithm~\ref{alg:cuspforms}.
\end{rem}

We now explain each of these steps in detail.

\subsection{Good primes that are not geometrically irreducible}\label{S:absredprimes}
In this section we describe the conditions that $\ell$ must satisfy for the base-extension
\(\oAl \colonequals A[\ell] \otimes_{\ff_\ell}\bar{\ff}_\ell\)
to be reducible.
In this case, the representation $\oAl$ is an extension
\begin{equation}\label{red_extn} 0 \to X_\ell \to \oAl \to Y_\ell \to 0\end{equation}
of a (quotient) representation $Y_\ell$ by a (sub) representation $X_\ell$.  
Recall that $N_{\sq}$ denotes the largest square divisor of $N$. 

\begin{lem}\label{lem:red_det_constraints} Let \(\ell\) be a prime of good reduction for \(A\) and suppose that $\oAl$ sits in the sequence \eqref{red_extn}. Let \(p\neq \ell\) be a good prime for \(A\) and let $f$ denote the order of $p$ in $\left(\zz/N_{\sq}\zz\right)^\times$.
Then there exists $0 \leq x \leq \dim X_{\ell}$ (respectively, $0 \leq y \leq \dim Y_\ell$) such that the value of $\det X_{\ell}$ (respectively, $\det Y_{\ell}$) evaluated at $\Frob_p^{\gcd(f, 120)}$ is $p^{\gcd(f,120)x}$ (respectively, $p^{\gcd(f, 120)y}$).
\end{lem}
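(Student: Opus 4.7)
The plan is to apply Corollary~\ref{C:subqtdet} to each Jordan--H\"older factor of $X_\ell$ (respectively $Y_\ell$), reducing the claim to the evaluation of a finite-order character of conductor dividing $N_{\sq}$ at a Frobenius element. Since $\ell$ is a prime of good (hence semistable) reduction for $A$, Corollary~\ref{C:subqtdet} applies to each Jordan--H\"older constituent of $X_\ell$; multiplying the resulting expressions yields
\[\det X_\ell \simeq \epsilon \cdot \cyc_\ell^x,\]
for some character $\epsilon \colon G_\qq \to \bar{\ff}_\ell^\times$ that is unramified at $\ell$ and satisfies $\epsilon^{120} = 1$, and some $0 \leq x \leq \dim X_\ell$. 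An identical argument gives $\det Y_\ell \simeq \epsilon' \cdot \cyc_\ell^y$ with analogous properties.

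The main intermediate step is to establish that $\epsilon$ has conductor dividing $N_{\sq}$, so that it can be viewed as a Dirichlet character modulo $N_{\sq}$. I will verify this prime-by-prime. At any prime $p \neq \ell$ where $A$ has multiplicative (semistable) reduction, Theorem~\ref{T:grothendieck} gives that $I_p$ acts unipotently on $T_\ell A$, so $\det X_\ell|_{I_p}$ is trivial and hence so is $\epsilon|_{I_p}$. At primes $p \neq \ell$ of additive reduction with $p \notin \{2,3,5\}$, the order of $\epsilon|_{I_p}$ divides $120$ and is thus coprime to $p$, so $\epsilon$ is tamely ramified at $p$ with conductor exponent at most $1$, while additive reduction forces $v_p(N_A) \geq 2$ and hence $v_p(N_{\sq}) \geq 1$. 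For the remaining primes $p \in \{2,3,5\}$, a finer ramification-theoretic analysis based on the Larson--Vaintrob bound \cite{larson_vaintrob_2014} (that $A$ attains semistable reduction over an extension of degree dividing $120$), together with standard estimates on the Artin conductor of characters of order dividing $120$, is needed to match $v_p(\cond(\epsilon))$ against $\lfloor v_p(N_A)/2 \rfloor = v_p(N_{\sq})$.

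Once $\epsilon$ factors through $(\zz/N_{\sq}\zz)^\times$, the definition of $f$ yields $\epsilon(\Frob_p)^f = \epsilon(p^f \bmod N_{\sq}) = \epsilon(1) = 1$, which combined with $\epsilon^{120} = 1$ gives $\epsilon(\Frob_p^{\gcd(f,120)}) = 1$. Substituting into the decomposition of $\det X_\ell$ produces
\[\det X_\ell(\Frob_p^{\gcd(f,120)}) = \epsilon(\Frob_p^{\gcd(f,120)}) \cdot \cyc_\ell(\Frob_p)^{\gcd(f,120)\, x} = p^{\gcd(f,120)\, x} \pmod{\ell},\]
as claimed; the argument for $\det Y_\ell$ is identical. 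The main obstacle is the conductor bound at $p \in \{2,3,5\}$, where wild ramification of $\epsilon$ is possible and where confirming $v_p(\cond(\epsilon)) \leq \lfloor v_p(N_A)/2 \rfloor$ requires detailed knowledge of higher ramification groups rather than the coarse tame--wild dichotomy that suffices at the other primes.
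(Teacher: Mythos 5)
Your opening steps (invoking Corollary~\ref{C:subqtdet} to write $\det X_\ell = \epsilon\,\cyc_\ell^x$, and then converting $\epsilon^{120}=1$ together with $\epsilon(\Frob_p)^f = 1$ into $\epsilon(\Frob_p)^{\gcd(f,120)}=1$) match the paper. But the central step — that $\cond(\epsilon)$ divides $N_{\sq}$ — is where you depart from the paper's route and where your proposal has a genuine gap, which you yourself flag: the prime-by-prime ramification analysis is fine at tame primes but you have no argument at the wildly ramified primes $p \in \{2,3,5\}$. The Larson--Vaintrob semistability bound alone does not control $v_p(\cond(\epsilon))$ against $\lfloor v_p(N)/2 \rfloor$ at those primes, and ``standard estimates on Artin conductors of characters of order dividing $120$'' is not a step you carry out.

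The paper avoids local ramification analysis entirely. It does not introduce a second character $\epsilon'$; instead it uses that $\det \oAl \simeq \cyc_\ell^2$ (a consequence of the Weil pairing), so that once $\det X_\ell = \epsilon\,\cyc_\ell^x$, the quotient automatically satisfies $\det Y_\ell = \epsilon^{-1}\cyc_\ell^{2-x}$. Away from $\ell$, $\cond(\epsilon) = \cond(\det X_\ell) \mid \cond(X_\ell)$ and $\cond(\epsilon) = \cond(\epsilon^{-1}) = \cond(\det Y_\ell) \mid \cond(Y_\ell)$; since the Artin conductor is multiplicative in short exact sequences, $\cond(X_\ell)\cond(Y_\ell) \mid N$, giving $\cond(\epsilon)^2 \mid N$ and hence $\cond(\epsilon) \mid N_{\sq}$ in one stroke. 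This global argument is what lets the bound hold uniformly, including at $2$, $3$, and $5$, with no case analysis and no wild-ramification estimates. If you want to salvage your prime-by-prime approach, the fix is precisely to import this multiplicativity observation rather than trying to bound $v_p(\cond(\epsilon))$ locally.
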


\begin{proof}
Since \(\ell\) is a good prime and \(X_\ell\) is composed of Jordan--H\"older factors of \(\oAl\), Corollary~\ref{C:subqtdet} constrains its determinant.
We have \(\det X_\ell = \epsilon \cyc_\ell^x\) for some character \(\epsilon \colon G_\qq \to \bar{\ff}_\ell\) unramified at \(\ell\), and \(0 \leq x \leq \dim X_\ell\), and \(\epsilon^{120}=1\). 
Hence the value of $\det X_{\ell}$ evaluated at $\Frob_p^{120}$ is $\cyc_{\ell}(\Frob_p)^{120x} = p^{120x}$.

In fact, we can do slightly better.  
Since $\det \oAl \simeq \cyc_{\ell}^2$, we have $\det Y_{\ell} \simeq \epsilon^{-1}\cyc_{\ell}^{2-x}$. Since the conductor is multiplicative in extensions, we conclude that $\cond(\epsilon)^2 \mid N$. 
By class field theory, the character $\epsilon$ factors through $\left(\zz/\cond(\epsilon)\zz\right)^\times$, and hence through $\left(\zz/N_{\sq}\zz\right)^\times$, sending $\Frob_p$ to $p \bmod{N_{\sq}}$.  Since $p^f \equiv 1 \bmod{N_{\sq}}$, we have that 
$\epsilon(\Frob_p)^{\gcd(f, 120)}=1$, 
and the value of $\det X_{\ell}$ evaluated at $\Frob_p^{\gcd(f, 120)}$ is $p^{\gcd(f,120)x}$. Exchanging \(X_\ell\) and \(Y_\ell\), we deduce the  result for \(Y_\ell\).
\end{proof}

This is often enough information to find all \(\ell\) for which \(\oAl\) has a nontrivial subquotient.  Namely, by Theorem \ref{integrality}, every root of \(P_p(t)\) has complex absolute value \(p^{1/2}\).  Thus the \(\gcd(f, 120)\)-th power of each root has complex absolute value \(p^{\gcd(f, 120)/2}\), and hence is never \textit{integrally} equal to \(1\) or \(p^{\gcd(f, 120)}\). 
Since Lemma \ref{lem:red_det_constraints} guarantees that this equality must hold modulo \(\ell\) for any good prime \(\ell\) for which \(\oAl\) is reducible with a \(1\)-dimensional subquotient, we always get a nontrivial condition on \(\ell\).  Some care must be taken to rule out \(\ell\) for which \(\oAl\) only has \(2\)-dimensional subquotient(s).

\subsubsection{{Odd-dimensional subquotient (cf.~\cite[Section 3.1]{Dieulefait})}}

Let $p$ be a good prime.   Given a polynomial \(P(t)\) and an integer \(f\), write $P^{(f)}(t)$ for the polynomial whose roots are the $f$th powers of roots of $P(t)$.  Universal formulas for such polynomials in terms of the coefficients of \(P(t)\) are easy to compute, and are implemented in our code in the case where \(P\) is a degree \(4\) polynomial whose roots multiply in pairs to \(p^\alpha\), and \(f \mid 120\).  

\begin{algo}[{cf.~\cite[Section 3.1]{Dieulefait}}]\label{alg:odd}
Given a typical genus \(2\) Jacobian \(A/\qq\) of conductor $N$, let $f$ denote the order of $p$ in $\left(\zz/N_{\sq}\zz\right)^\times$ and write $f' = \gcd(f, 120)$. Compute an integer $M_{\text{odd}}$ as follows.
\begin{enumerate}[\upshape (1)]
    \item Choose a nonempty finite set \(\mathcal{T}\) of auxiliary good primes \(p \nmid N\).
    \item For each \(p\), compute
    \[ R_p \colonequals P_p^{(f')}(1).\]    
    \item Let \(M_{\text{odd}} = \gcd_{p \in \mathcal{T}}(p R_p)\) over all auxiliary primes.
\end{enumerate}
Return the list of prime divisors \(\ell\) of \(M_{\text{odd}}\).
\end{algo}

\begin{rem}Algorithm~\ref{alg:odd} offers a modest improvement on \cite[Section 3.1]{Dieulefait}), where the exponent \(f\) is used (without taking the gcd with \(120\).) \end{rem}

\begin{prop}\label{prop:alg_odd_output}
Any good prime \(\ell\) for which $\oAl$ has an odd-dimensional subrepresentation is returned by Algorithm~\ref{alg:odd}.
\begin{proof}
Since $\oAl$ is $4$-dimensional and has an odd-dimensional subrepresentation, it has a $1$-dimensional subquotient. For any $p \in \mathcal{T}$, Lemma~\ref{lem:red_det_constraints} shows that the value of $\det X_{\ell}$ evaluated at $\operatorname{Frob}_p^{f'}$ is either $p^{f'}$ or $1$. Thus, the action of $\operatorname{Frob}_p^{f'}$ on $\oAl$
has an eigenvalue that is equal to $p^{f'}$ or $1$ in $\bar{\ff}_\ell$, and so \(P_p^{(f')}(t)\) has a root that is equal to \(1\) or \(p^{f'}\) in \(\bar{\ff}_\ell\). Since the roots of \(P^{(f')}(t)\) multiply in pairs to \(p^{f'}\), we have \(P_p^{(f')}(p^{f'}) = p^{2f'}P_p^{(f')}(1)\). 
Hence \(\ell\) divides \(p \cdot P_p^{(f')}(1)=pR_p\).
\end{proof}
\end{prop}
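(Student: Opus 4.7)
The plan is to establish that $\ell \mid p R_p$ for every auxiliary prime $p \in \mathcal{T}$, so that $\ell$ divides $M_{\text{odd}} = \gcd_{p}(p R_p)$ and is therefore returned by the algorithm. The strategy is the standard one: extract a one-dimensional subquotient of $\oAl$, pin down its determinant via Lemma~\ref{lem:red_det_constraints}, and then consolidate the two resulting eigenvalue possibilities using the palindromic shape of $P_p(t)$ coming from the Weil pairing.

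First I will observe that a four-dimensional representation with an odd-dimensional subrepresentation always admits a one-dimensional subquotient $L$: if the subrepresentation has dimension one, take $L$ to be it, and if it has dimension three, take $L$ to be the one-dimensional cokernel. Applying Lemma~\ref{lem:red_det_constraints} to $L$ (which is its own determinant, since $\dim L = 1$) then yields $L(\Frob_p^{f'}) \in \{1,\, p^{f'}\}$ in $\bar{\ff}_\ell$, where $f' \colonequals \gcd(f, 120)$. Because $L$ is a subquotient of $\oAl$ and $\Frob_p^{f'}$ acts on it through this scalar, either $1$ or $p^{f'}$ must appear as a root of $P_p^{(f')}(t) \bmod \ell$; equivalently, $\ell$ divides at least one of $P_p^{(f')}(1)$ and $P_p^{(f')}(p^{f'})$.

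To unify these two cases I will invoke Lemma~\ref{L:charpolygen}(i): the roots of $P_p(t)$ pair up to multiply to $p$, so the roots of $P_p^{(f')}(t)$ pair up to multiply to $p^{f'}$, which forces the palindromic shape
\[ P_p^{(f')}(t) = t^4 - A t^3 + B t^2 - p^{f'} A t + p^{2 f'} \]
for some integers $A, B$. Direct substitution then gives $P_p^{(f')}(p^{f'}) = p^{2 f'} P_p^{(f')}(1)$, so in either case $\ell$ divides $p^{2 f'} P_p^{(f')}(1)$, and hence (since $\ell$ is prime) divides $p \cdot R_p$. Taking the gcd over $p \in \mathcal{T}$ completes the argument.

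The only step I expect to require any real care is the first: verifying that Lemma~\ref{lem:red_det_constraints} genuinely restricts $L(\Frob_p^{f'})$ to the two listed values, which rests on both the bound $0 \leq x \leq \dim L = 1$ on the cyclotomic exponent and the triviality of the unramified error character on $\Frob_p^{f'}$. The palindromic identity and the eigenvalue-to-root correspondence are routine.
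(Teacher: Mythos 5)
Your proposal is correct and takes essentially the same route as the paper's proof: extract a one-dimensional subquotient, constrain its value on $\Frob_p^{f'}$ via Lemma~\ref{lem:red_det_constraints} to $\{1, p^{f'}\}$, conclude that $P_p^{(f')}(t)$ has $1$ or $p^{f'}$ as a root mod $\ell$, and collapse the two cases using the palindromic identity $P_p^{(f')}(p^{f'}) = p^{2f'}P_p^{(f')}(1)$. You spell out the case split (dimension one subrepresentation versus dimension three with one-dimensional cokernel) and cite Lemma~\ref{L:charpolygen}(i) for the paired-roots property, both of which the paper leaves implicit, but the argument is the same.
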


Using Theorem \ref{integrality}, we can give a theoretical bound on the ``worst case" of this step of the algorithm using only one auxiliary prime \(p\).  Of course, taking the greatest common divisor over multiple auxiliary primes will likely remove extraneous factors, and in practice this step of the algorithm runs substantially faster than other steps.

\begin{prop}\label{prop:alg_odd_terminates}
Algorithm \ref{alg:odd} terminates.  More precisely, if \(p\) is good, then
\[0 \neq |M_{\text{odd}}| \ll p^{241},\]
where the implied constant is absolute.
\end{prop}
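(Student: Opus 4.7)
The plan is to bound $|M_{\text{odd}}|$ by $|pR_p|$ for a single auxiliary prime $p$ (since $M_{\text{odd}}$ is a gcd over all auxiliary primes, it divides any individual term), and then use Weil's bound from Theorem \ref{integrality} to control $|R_p|$. The nonvanishing (hence termination) and the upper bound will both fall out of the same estimate applied to the factorization $R_p = \prod_{i=1}^{4}(1 - \lambda_i^{f'})$, where $\lambda_1, \ldots, \lambda_4$ are the roots of $P_p(t)$.

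First I would observe that $f \geq 1$ (the order of $p$ in $(\zz/N_{\sq}\zz)^\times$ is at least $1$, interpreting the trivial case sensibly), so $f' = \gcd(f, 120)$ satisfies $1 \leq f' \leq 120$. By Theorem \ref{integrality}, each $\lambda_i$ has complex absolute value $p^{1/2}$, so $|\lambda_i^{f'}| = p^{f'/2} \geq p^{1/2} > 1$ for any prime $p \geq 2$. In particular $1 - \lambda_i^{f'} \neq 0$ for each $i$, so
\[ R_p = P_p^{(f')}(1) = \prod_{i=1}^{4}\bigl(1 - \lambda_i^{f'}\bigr) \neq 0. \]
Hence $pR_p \neq 0$, and since $M_{\text{odd}} \mid pR_p$, we conclude $M_{\text{odd}} \neq 0$. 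This shows the algorithm outputs a finite list of prime divisors, i.e., it terminates.

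For the upper bound, the same factorization gives
\[ |R_p| \leq \prod_{i=1}^{4}\bigl(1 + |\lambda_i|^{f'}\bigr) = \bigl(1 + p^{f'/2}\bigr)^4 \leq \bigl(2p^{f'/2}\bigr)^4 = 16\, p^{2f'}. \]
Using $f' \leq 120$, we obtain $|R_p| \leq 16\, p^{240}$, and therefore $|pR_p| \leq 16\, p^{241}$. Since $M_{\text{odd}}$ divides $pR_p$, the claimed estimate $|M_{\text{odd}}| \ll p^{241}$ follows with an absolute (in fact explicit) implied constant.

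There is no serious obstacle here: the entire proof is a direct application of the Weil bound on the sizes of Frobenius eigenvalues together with the trivial fact that $f' \leq 120$. The only subtlety worth flagging is to verify that $f'$ is genuinely bounded independent of everything (which is immediate from $f' \mid 120$), so that the degree-in-$p$ estimate does not depend on $A$ or $N$.
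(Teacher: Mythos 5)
Your proof is correct and follows essentially the same approach as the paper: the paper bounds the coefficients of $P_p^{(f')}(t)$ by powers of $p$ via the Weil bound and uses $f' \leq 120$, while you obtain the same estimate by factoring $P_p^{(f')}(1) = \prod_i(1 - \lambda_i^{f'})$ directly over the roots. Your version is slightly more explicit about the nonvanishing (since $|\lambda_i^{f'}| = p^{f'/2} \neq 1$) and gives an explicit constant, but the underlying ingredient is identical.
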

\begin{proof}
This follows from the fact that the coefficient of \(t^i\) in \(P_p^{(f')}(t)\) has magnitude on the order of \(p^{(2-i/2)f'}\) and \(f' \leq 120\). 
\end{proof}

\subsubsection{Two-dimensional subquotients}
We now assume that $\oAl$ is reducible, but does not have any odd-dimensional subquotients.  In particular, it has an irreducible subrepresentation $X_{\ell}$ of dimension $2$, with irreducible quotient \(Y_\ell\) of dimension \(2\).
If $\oAl$ is reducible but indecomposable,
 then $X_{\ell}$ is the unique subrepresentation of $\oAl$ and $Y_\ell^\vee \otimes \cyc_{\ell}$ is the unique subrepresentation of $\overline{A[\ell]}^\vee \otimes \cyc_\ell$. The isomorphism \(T_\ell A \simeq (T_\ell A)^\vee \otimes \cyc_\ell\) from \eqref{E:WeilpairingSim}
 yields an isomorphism \(\oAl \simeq (\oAl)^\vee \otimes \cyc_\ell\) and hence
\( X_\ell \simeq Y_\ell^\vee \otimes \cyc_\ell\).
Otherwise, $\oAl \simeq X_\ell \oplus Y_\ell$ and so the nondegeneracy of the Weil pairing 
gives
\[X_\ell \oplus Y_\ell \simeq \left(X_\ell^\vee \otimes \cyc_\ell\right) \oplus \left(Y_\ell^\vee \otimes \cyc_\ell\right).\]
Therefore either:
\begin{enumerate}[(a)]
\item\label{case:related} $X_\ell \simeq Y_\ell^\vee \otimes \cyc_\ell$ and $Y_\ell\simeq X_\ell^\vee \otimes \cyc_\ell$, or 
\item\label{case:selfdual} $X_\ell \simeq X_\ell^\vee \otimes \cyc_\ell$ and $Y_\ell \simeq Y_\ell^\vee \otimes \cyc_\ell$ and $\oAl\simeq X_\ell \oplus Y_\ell$.
\end{enumerate}
We call the first case \defi{related $2$-dimensional subquotients} and the second case \defi{self-dual $2$-dimensional subrepresentations}.  We will see that the ideas of Lemma \ref{lem:red_det_constraints} easily extend to treat the related subquotient case; we will use the validity of Serre's conjecture to treat the self-dual case. In the case that \(\oAl\) is decomposable, the above two cases correspond respectively to the index \(2\) subgroup \(M_\ell\) in cases \eqref{max:congruence} (the isotropic case) and \eqref{max:quadric} (the nondegenerate case) of Lemma~\ref{lem:max_subgroups}. 

\subsubsection{Related two-dimensional subquotients  (cf.~\cite[Section 3.2]{Dieulefait})}

Let $p$ be a good prime. Let $P_p(t) \colonequals t^4 - at^3 + bt^2 - p a t + p^2$ be the characteristic polynomial of $\Frob_p$ acting on $\oAl$. Suppose that $\alpha$ and $\beta$ are the eigenvalues of $\Frob_p$ acting on the subrepresentation $X_\ell$.  Then, since $X_\ell \simeq Y_\ell^\vee \otimes \cyc_\ell$, the eigenvalues of the action of $\Frob_p$ on $Y_{\ell}$ are $p/\alpha$ and $p/\beta$.  The action of $\Frob_p$ on $\det X_{\ell}$ is therefore by a product of two of the roots of $P_p(t)$ that do not multiply to $p$. Note that there are four such pairs of roots of $P_p(t)$ that do not multiply to $p$. Let $Q_p(t)$ be the quartic polynomial whose roots are the products of pairs of roots of $P_p(t)$ that do not multiply to $p$.  By design, the roots of \(Q_p(t)\) have complex absolute value \(p\), but are not equal to \(p\).  (It is elementary to work out that
\[Q_p(t) = t^4 - (b - 2p)t^3 + p(a^2 - 2b + 2p)t^2 - p^2(b - 2p)t + p^4\]
and is a quartic whose roots multiply in pairs to $p^{2}$.)

\begin{algo}[{cf.~\cite[Section 3.2]{Dieulefait}}]\label{alg:related}
Given a typical genus \(2\) Jacobian \(A/\qq\) of conductor $N$, let $f$ denote the order of $p$ in $\left(\zz/N_{\sq}\zz\right)^\times$ and write $f' = \gcd(f, 120)$. Compute an integer $M_{\text{related}}$ as follows.
\begin{enumerate}[\upshape (1)]
    \item Choose a finite set \(\mathcal{T}\) of auxiliary good primes \(p \nmid N\);
    \item For each \(p\), compute the product
    \[ R_p \colonequals Q_p^{(f')}(1) Q_p^{(f')}(p^{f'}) \]
    \item Let \(M_{\text{related}} = \gcd_{p \in \mathcal{T}}(p R_p)\). 
\end{enumerate}
Return the list of prime divisors \(\ell\) of \(M_{\text{related}}\).
\end{algo}

\begin{rem} Algorithm~\ref{alg:related} offers a modest improvement on the procedure described in \cite[Section 3.2]{Dieulefait}) by taking the gcd of \(f\) with \(120\). \end{rem}

\begin{prop}\label{prop:alg_related_output}
Any good prime \(\ell\) for which $\oAl$ has related two-dimensional subquotients is returned by Algorithm~\ref{alg:related}.
\begin{proof}
Proceed similarly as in the proof of Proposition~\ref{prop:alg_odd_output} --- in particular, $\ell$ divides $Q_p^{(f')}(1)$, $Q_p^{(f')}(p^{f'})$, or $Q_p^{(f')}(p^{2f'})$ and hence $\ell$ divides $p R_p$ since \(Q_p^{(f')}(p^{2f'}) = p^{4f'} Q_p^{(f')}(1)\).
\end{proof}
\end{prop}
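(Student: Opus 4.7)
The plan is to mirror the strategy of Proposition~\ref{prop:alg_odd_output}, but now applied to the $2$-dimensional subrepresentation $X_\ell$ coming from a related subquotient decomposition, with $Q_p(t)$ playing the role of $P_p(t)$.

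First, I would unpack the hypothesis: if $\oAl$ has related two-dimensional subquotients, then over $\bar{\ff}_\ell$ there is a short exact sequence $0 \to X_\ell \to \oAl \to Y_\ell \to 0$ with $X_\ell \simeq Y_\ell^\vee \otimes \cyc_\ell$. Writing $\alpha, \beta$ for the eigenvalues of $\Frob_p$ on $X_\ell$, the relation $X_\ell \simeq Y_\ell^\vee \otimes \cyc_\ell$ forces the eigenvalues on $Y_\ell$ to be $p/\alpha$ and $p/\beta$, so the multiset of roots of $P_p(t) \bmod \ell$ is $\{\alpha, \beta, p/\alpha, p/\beta\}$. Then $\det X_\ell(\Frob_p) \equiv \alpha\beta \pmod{\ell}$ is a product of two of these roots. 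Since (generically) the only pairs multiplying to $p$ are $(\alpha, p/\alpha)$ and $(\beta, p/\beta)$, the product $\alpha\beta$ is by construction a root of $Q_p(t) \bmod \ell$; the degenerate case $\alpha = \beta$ reduces to a limiting argument over $\bar{\ff}_\ell$.

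Next, I would apply Lemma~\ref{lem:red_det_constraints} to $X_\ell$: since $\ell$ is good and the Jordan-H\"older factors of $X_\ell$ are Jordan-H\"older factors of $\oAl$, the value of $\det X_\ell$ at $\Frob_p^{f'}$ is $p^{f'x}$ for some $0 \leq x \leq 2$. Raising the identification to the $f'$-th power, $(\alpha\beta)^{f'} \equiv p^{f'x} \pmod{\ell}$, so $Q_p^{(f')}(t) \bmod \ell$ has a root in $\{1, p^{f'}, p^{2f'}\}$. Hence $\ell$ divides the product $Q_p^{(f')}(1) \cdot Q_p^{(f')}(p^{f'}) \cdot Q_p^{(f')}(p^{2f'})$.

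Finally, I would eliminate the third factor via the functional equation built into $Q_p^{(f')}$. Since its roots multiply in pairs to $p^{2f'}$, a direct manipulation gives $t^4 Q_p^{(f')}(p^{2f'}/t) = p^{4f'} Q_p^{(f')}(t)$; setting $t = 1$ yields $Q_p^{(f')}(p^{2f'}) = p^{4f'} Q_p^{(f')}(1)$. Therefore any $\ell$ dividing $Q_p^{(f')}(p^{2f'})$ either divides $p$ or divides $Q_p^{(f')}(1)$, and in both cases $\ell \mid p R_p$. Varying $p$ over the auxiliary set $\mathcal{T}$ and taking gcds gives $\ell \mid M_{\text{related}}$, as required. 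The main conceptual step is identifying $\det X_\ell(\Frob_p)$ as a root of $Q_p$ rather than $P_p$; once that is in hand, everything after is routine bookkeeping parallel to Proposition~\ref{prop:alg_odd_output}.
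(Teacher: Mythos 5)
Your proof is correct and follows exactly the paper's approach: identify $\det X_\ell(\Frob_p)$ as a root of $Q_p(t)\bmod\ell$, apply Lemma~\ref{lem:red_det_constraints} to constrain its $f'$-th power to $\{1, p^{f'}, p^{2f'}\}$, and use the functional equation $Q_p^{(f')}(p^{2f'}) = p^{4f'}Q_p^{(f')}(1)$ to reduce to two cases. The paper's proof is simply terser because it defers the unpacking of eigenvalues and the definition of $Q_p$ to the preceding discussion; your parenthetical about a ``limiting argument'' for $\alpha=\beta$ is unnecessary, since $Q_p(t)$ is defined by an explicit universal formula in $a,b,p$ and the identification of $\alpha\beta$ as a root holds purely algebraically.
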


\noindent A theoretical ``worst case" analysis yields the following.

\begin{prop}\label{prop:alg_related_terminates}
Algorithm \ref{alg:related} terminates.  More precisely, if \(q\) is the smallest surjective prime for \(A\), then a good prime \(p\) for which \(R_p\) is nonzero is bounded by a function of \(q\).
Assuming GRH, 
\[p \ll q^{22}\log^2(q N),\]
where the implied constants are absolute and effectively computable.
Moreover, for such a prime \(p\), 
\[|M_{\text{related}}| \ll p^{961} \ll q^{21142}\log^{1922}(q N),\]
where the implied constants are absolute.
\end{prop}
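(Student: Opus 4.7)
The plan is to show, via a Chebotarev argument leveraging surjectivity of $\rho_{A,q}$, that there exists a good prime $p$ of size $O(q^{22}\log^2(qN))$ with $R_p \neq 0$, and then to bound $|M_{\text{related}}|$ via complex absolute values on the coefficients of $Q_p^{(f')}$ exactly as in the proof of Proposition~\ref{prop:alg_odd_terminates}.

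The first step is to observe that $Q_p^{(f')}(1) \neq 0$ for every good prime $p$: its roots, being $f'$-th powers of complex numbers of absolute value $p$, have absolute value $p^{f'} > 1$. Hence $R_p = 0$ is equivalent to $Q_p^{(f')}(p^{f'}) = 0$, which says that some pair of roots $\alpha_i, \alpha_j$ of $P_p(t)$ with $\alpha_i\alpha_j \neq p$ satisfies $(\alpha_i\alpha_j/p)^{f'} = 1$. Reducing mod $q$ and using $f' \mid 120$, a sufficient condition for $R_p \not\equiv 0 \pmod{q}$ (and thus $R_p \neq 0$ as an integer) is that $\rho_{A,q}(\Frob_p)$ lies in the conjugation-closed subset
\[ S \colonequals \left\{M \in \GSp_4(\ff_q) \,:\, (\alpha_i\alpha_j/\simi(M))^{120} \neq 1 \text{ for all eigenvalue pairs of } M \text{ with } \alpha_i\alpha_j \neq \simi(M)\right\}. \]

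Next, I would verify that $S$ is nonempty by exhibiting a sufficiently generic element whose pairwise eigenvalue products avoid the finite subgroup of $120$th roots of unity in $\bar{\ff}_q^\times$ times $\simi(M)$. Since $\rho_{A,q}$ is surjective by hypothesis, applying Lemma~\ref{lem:effective_cheb} to $S$ produces a good prime $p \neq q$ with $\rho_{A,q}(\Frob_p) \in S$ and
\[ p \leq \left(4\left[(2q^{11}-1)\log\rad(2qN_A) + 22q^{11}\log(2q)\right] + 5q^{11} + 5\right)^2 \ll q^{22}\log^2(qN), \]
which gives the asserted bound on the smallest such $p$. For the bound on $|M_{\text{related}}|$, since each root of $Q_p^{(f')}(t)$ has complex absolute value $p^{f'} \leq p^{120}$, Vieta's formulas yield $|Q_p^{(f')}(1)|, |Q_p^{(f')}(p^{f'})| \ll p^{480}$, whence $|M_{\text{related}}| \leq |pR_p| \ll p^{961}$; substituting the bound on $p$ then gives the final estimate $q^{22 \cdot 961}\log^{2 \cdot 961}(qN) = q^{21142}\log^{1922}(qN)$.

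The main obstacle is making the nonemptiness of $S$ uniform in $q$. Heuristically this is clear---the bad locus is cut out by a finite union of polynomial equations, and the set of possible $(\alpha_i\alpha_j/\simi(M))^{120} = 1$ is very restrictive---but a rigorous uniform verification likely requires either a clean generic construction (e.g., diagonal matrices with eigenvalues of multiplicative order coprime to $120$ in a suitable extension, which exist for all $q \geq 2$ after a finite check) or an explicit small-$q$ case analysis. Everything else is a routine combination of the effective Chebotarev input from Lemma~\ref{lem:effective_cheb} and the Weil bounds underlying Theorem~\ref{integrality}.
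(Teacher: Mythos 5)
Your proof follows the same overall route as the paper: identify a conjugation-closed subset $S \subseteq \GSp_4(\ff_q)$ such that $\rho_{A,q}(\Frob_p) \in S$ forces $R_p \neq 0$, invoke the effective Chebotarev bound from Lemma~\ref{lem:effective_cheb} to find a small such $p$, and then bound $|M_{\text{related}}| = |pR_p|$ via the fact that the coefficients of $Q_p^{(f')}$ are polynomials of controlled degree in $p$ (all roots of $Q_p^{(f')}$ having complex absolute value $p^{f'}$ with $f' \le 120$). Your bookkeeping $|pR_p| \ll p \cdot p^{4f'} \cdot p^{4f'} = p^{8f'+1} \le p^{961}$ matches the paper, as does the substitution of $p \ll q^{22}\log^2(qN)$.

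The one genuine difference is the choice of $S$. The paper takes $S$ to be the set of matrices with irreducible characteristic polynomial over $\ff_q$, asserting (without elaboration) that $\Frob_p$ landing there forces $R_p \not\equiv 0 \pmod q$; this is a one-line appeal, and the mechanism by which irreducibility of $P_p \bmod q$ rules out $(\alpha_i\alpha_j)^{f'} \equiv p^{f'} \pmod q$ for all non-Weil pairs is not spelled out. You instead define $S$ directly by the condition $(\alpha_i\alpha_j/\simi(M))^{120} \neq 1$, which is exactly the condition ensuring $Q_p^{(f')}(p^{f'}) \not\equiv 0 \pmod q$; your observation that $Q_p^{(f')}(1)$ is \emph{always} nonzero integrally (because roots have modulus $p^{f'} > 1$) cleanly reduces the problem to that single factor, which is a nice simplification the paper does not make explicit. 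So your $S$ is, if anything, more directly tailored to the task than the paper's.

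The gap you flag — proving nonemptiness of your $S$ uniformly in $q$ — is a real step that must be done, but it is not a fatal obstacle: for a matrix with irreducible characteristic polynomial, the relation $\alpha^{q^2+1} = \simi(M)$ lets one rewrite $\alpha^{1+q}/\simi(M) = \alpha^{-q(q-1)}$, so avoiding $120$th roots of unity becomes a divisibility constraint on $\ord(\alpha)$, which has room to be satisfied once $q$ is moderately large; the finitely many small $q$ are absorbed into the absolute implied constant. Note, however, that exactly the same nonemptiness/sufficiency verification is elided in the paper's proof as well — the paper's invocation of "there exists an element with irreducible characteristic polynomial" would also require checking that such an element has $R \neq 0 \pmod q$. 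So you are not missing anything the paper actually supplies; you are just being more honest about where the hand-waving occurs.
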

\begin{proof}
By Serre's open image theorem for genus \(2\) curves, such a prime \(q\) exists. Since there exists an element of \(\GSp_{4}(\ff_q)\) with irreducible characteristic polynomial, by Lemma \ref{lem:effective_cheb} there exists a prime \(p\) (bounded as claimed) such that \(R_p\) is nonzero modulo \(q\). Finally, 
\[M_\text{related} \leq pR_p = p Q^{(f')}(1) Q^{(f')}(p^{f'}) \ll p^{8f'+1} \ll p^{961},\]
since the coefficient of \(t^i\) in \(Q^{(f')}(t)\) has magnitude on the order of \(p^{(4-i)f'}\) and \(f' \leq 120\).
\end{proof}

\subsubsection{Self-dual two-dimensional subrepresentations (cf.~\cite[Section 3.3]{Dieulefait})}

In this case, both subrepresentations $X_\ell$ and $Y_{\ell}$ are absolutely irreducible $2$-dimensional Galois representations with determinant the cyclotomic character $\cyc_\ell$.  It follows that the representations are odd (i.e., the determinant of complex conjugation is $-1$.) Therefore, by the Khare--Wintenberger theorem (formerly Serre's conjecture on the modularity of mod-$\ell$ Galois representations) \cite{khare2006, kharewintenberger2009I, kharewintenberger2009II}, both $X_\ell$ and $Y_\ell$ are \defi{modular}; that is, for $i = 1,2$, there exist newforms $f_i \in S_{k_i}^{\text{new}}(\Gamma_1(N_i), \epsilon_i)$ such that
\begin{equation*}
X_\ell \cong \bar{\rho}_{f_1, \ell} \mbox{ and } Y_\ell \cong \bar{\rho}_{f_2, \ell}.
\end{equation*}
Furthermore, by the multiplicativity of Artin conductors, we obtain the divisibility $N_1N_2 \mid N$.

\begin{lem}
Both $f_1$ and $f_2$ have weight two and trivial Nebentypus; that is, $k_1 = k_2 = 2$, and $\epsilon_1 = \epsilon_2 = 1$.
\end{lem}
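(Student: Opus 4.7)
The plan is to combine the good reduction of $A$ at $\ell$ with the optimal-weight part of Serre's conjecture. First, I would use that $\ell \nmid N$ to conclude that $A$ has good reduction at $\ell$, so by N\'eron--Ogg--Shafarevich the Galois module $A[\ell]$ extends to a finite flat group scheme over $\zz_\ell$. Since we are in the decomposable case $\oAl \simeq X_\ell \oplus Y_\ell$, both $X_\ell$ and $Y_\ell$ arise as direct summands of a finite flat representation; by Raynaud's theory of finite flat group schemes of type $(\ell,\dots,\ell)$ (applicable since $\ell > 2$), both $X_\ell$ and $Y_\ell$ are themselves finite flat at $\ell$.

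Next, I would combine this with what has already been established about $X_\ell$ and $Y_\ell$: each is a $2$-dimensional, absolutely irreducible, odd Galois representation with determinant $\cyc_\ell$. The Serre weight recipe assigns the optimal weight $k(V) = 2$ to any such representation that is finite flat at $\ell$. The Khare--Wintenberger theorem, in its optimal-weight formulation, then upgrades the mere modularity of $X_\ell$ and $Y_\ell$ (already invoked in the paragraph preceding the lemma) to the existence of newforms $f_1, f_2$ realizing them with weight exactly $k_i = 2$.

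Finally, I would pin down the Nebentypus by comparing determinants: for a newform $f$ of weight $k$ and Nebentypus $\epsilon$, the standard formula reads
\[ \det \bar\rho_{f,\ell} \;=\; \epsilon \cdot \cyc_\ell^{\,k-1}. \]
Substituting $k_i = 2$ and $\det X_\ell = \det Y_\ell = \cyc_\ell$ forces $\epsilon_i = 1$ for $i = 1, 2$.

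The main obstacle is the careful invocation of the weight part of Serre's conjecture in the finite-flat case at the prime $\ell$ itself, but for $\ell > 7$ (which is the range of interest, since $2, 3, 5, 7$ have already been added to $\PossiblyNonsurjectivePrimes$) this is subsumed by Khare--Wintenberger and the prior work of Edixhoven on the optimal weight. Everything else is a routine determinant bookkeeping argument.
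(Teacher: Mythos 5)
Your argument is correct, and in fact it coincides with the parenthetical alternative the paper itself records in the proof. The paper's primary route invokes Theorem~\ref{T:serre-raynaud} (Serre--Raynaud on tame inertia) to pin down the restrictions $X_\ell|_{I_\ell}$ and $Y_\ell|_{I_\ell}$ explicitly --- they must be conjugate to an upper-triangular group with diagonal characters $1,\cyc_\ell$, or to a diagonal group with fundamental characters $\psi_2,\psi_2^\ell$ of level two --- and then reads off weight $2$ from Proposition~3 of Serre's article on the optimal-weight recipe. The paper then notes, in an aside, that one may instead observe that $X_\ell$ and $Y_\ell$ are finite and flat over $\zz_\ell$ and cite Proposition~4 of \emph{loc.\ cit.} Your proposal fleshes out exactly this aside: good reduction at $\ell$ yields an abelian scheme over $\zz_\ell$ whose $\ell$-torsion is finite flat, and Raynaud's theory of finite flat group schemes of exponent $\ell$ over an absolutely unramified base (valid here since $\ell > 2$) propagates finite flatness to the direct summands $X_\ell$ and $Y_\ell$; Edixhoven's optimal-weight theorem then gives $k_i = 2$. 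The paper's primary route has the modest advantage of being self-contained given the already-stated Theorem~\ref{T:serre-raynaud}, while your route shifts the weight to the finite-flat formulation, which is perhaps closer to how practitioners of modularity results tend to state the weight part. Your determinant computation for the Nebentypus is identical to the paper's.

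One small point of attribution: the finite flatness of $A[\ell]$ over $\zz_\ell$ comes from the existence of an abelian scheme over $\zz_\ell$ extending $A$ (i.e.\ from good reduction at $\ell$ and the N\'eron model), rather than from the N\'eron--Ogg--Shafarevich criterion per se, which is usually phrased in terms of unramifiedness of $T_pA$ at primes $p \neq \ell$. This is a matter of phrasing and does not affect the validity of your argument.
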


\begin{proof}
From Theorem~\ref{T:serre-raynaud}, we have that $X_\ell|_{I_\ell}$ and $Y_\ell|_{I_\ell}$ must each be conjugate to either of the following subgroups of $\GL_2(\ff_\ell)$:

    \[ \begin{pmatrix}
1 & *\\
0 & \cyc_\ell
\end{pmatrix} \mbox{   or   } 
\begin{pmatrix}
\psi_2 & 0\\
0 & \psi_2^\ell
\end{pmatrix}.\]

The assertion of weight $2$ now follows from \cite[Proposition 3]{serreconj}. (Alternatively, one may use Proposition $4$ of \emph{loc. cit.}, observing that $X_\ell$ and $Y_\ell$ are finite and flat as group schemes over $\zz_\ell$ because $\ell$ is a prime of good reduction.)

From Section~$1$ of \emph{loc. cit.}, the Nebentypus $\epsilon_i$ of $f_i$ satisfies, for all $p \nmid \ell N$,
\[ \det X_\ell(\Frob_p) = p \cdot \epsilon_i(p), \]
where this equality is viewed inside $\bar{\ff}_\ell^\times$. The triviality of \(\epsilon_i\) follows.
\end{proof}

We therefore have newforms $f_i \in \mathcal{S}_2^{\text{new}}(\Gamma_0(N_i))$ such that
\begin{equation}\label{eqn:decompose}
\oAl\simeq \bar{\rho}_{f_1, \ell} \oplus \bar{\rho}_{f_2, \ell}.
\end{equation}
We may assume without loss of generality that \(N_1 \leq \sqrt{N}\).  Let $p \nmid N$ be an auxiliary prime. We obtain from equation~\eqref{eqn:decompose} that the integral characteristic polynomial of Frobenius factors:
\[ P_p(t) \equiv (t^2 - a_p(f_1)t + p)(t^2 - a_p(f_2)t + p) \mod{\ell};\]
here we use the standard property that, for $f$ a normalised eigenform with trivial Nebentypus, $\rho_{f,\ell}(\Frob_p)$ satisfies the polynomial equation $t^2 - a_p(f)t + p$ for $p \neq \ell$. In particular, we have
\[ \Res(P_p(t),t^2 - a_p(f_1)t + p) \equiv 0 \mod{\ell}. \]
This serves as the basis of the algorithm to find all primes $\ell$ in this case.

\begin{algo}[{\cite[Section 3.3]{Dieulefait}}]\label{alg:cuspforms}
Given a typical genus \(2\) Jacobian \(A/\qq\) of conductor $N$, compute an integer $M_{\text{self-dual}}$ as follows.

\begin{enumerate}[\upshape (1)]
    \item Compute the set $S$ of divisors $d$ of $N$ with $d \leq \sqrt{N}$.
    \item For each $d \in S$:
    \begin{enumerate}[\upshape (a)]
        \item choose a finite set \(\mathcal{T}\) of auxiliary primes $p \nmid N$;
        \item for each auxiliary prime $p$, compute the \emph{Hecke $L$-polynomial}
       \[ Q_d(t) \colonequals \prod_{f}(t^2 - a_p(f)t + p), \]
    where the product is taken over the finitely many newforms in $\mathcal{S}_2^{\text{new}}(\Gamma_0(d))$;
        \item compute the resultant
        \[ R_p(d) \colonequals \Res(P_p(t),Q_d(t));\]
        \item Take the greatest common divisor
        \[ M(d) \colonequals \gcd_{p \in \mathcal{T}}(pR_p(d)).  \]
    \end{enumerate}
    \item Let $M_{\text{self-dual}} \colonequals \prod_{d \in S}M(d)$.
\end{enumerate}
Return the list of prime divisors $\ell$ of $M_{\text{self-dual}}$.
\end{algo}

\begin{prop}\label{prop:alg_cuspforms_output}
Any good prime \(\ell\) for which $\oAl$ has self-dual \(2\)-dimensional subrepresentations is returned by Algorithm~\ref{alg:cuspforms}.
\end{prop}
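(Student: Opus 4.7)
The plan is to unwind the hypothesis through the Khare--Wintenberger theorem to produce two classical newforms whose levels divide $N$, locate at least one of those levels among the divisors tested in the algorithm, and then observe that $P_p(t)$ reduces modulo $\ell$ to the product of the two associated Hecke polynomials, forcing the corresponding resultant to vanish.

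First I would invoke the setup laid out immediately before the algorithm: the self-dual case means $\oAl \simeq X_\ell \oplus Y_\ell$ with both summands absolutely irreducible of dimension $2$ and with determinant $\cyc_\ell$. The preparatory lemma combined with Khare--Wintenberger yields newforms $f_i \in S_2^{\text{new}}(\Gamma_0(N_i))$ with $X_\ell \simeq \bar\rho_{f_1, \ell}$ and $Y_\ell \simeq \bar\rho_{f_2, \ell}$, and multiplicativity of Artin conductors gives $N_1 N_2 \mid N$. After swapping $f_1$ and $f_2$ if necessary, I may assume $N_1 \leq \sqrt{N}$, placing $N_1 \in S$.

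Next, for any auxiliary prime $p \in \mathcal{T}$, the standard Eichler--Shimura congruence for each $\bar\rho_{f_i, \ell}$, together with the decomposition \eqref{eqn:decompose}, gives
\[ P_p(t) \equiv (t^2 - a_p(f_1)t + p)(t^2 - a_p(f_2)t + p) \pmod{\ell}. \]
Hence $t^2 - a_p(f_1)t + p$ and $P_p(t)$ share a root in $\overline{\ff}_\ell$, which forces $\Res\bigl(P_p(t),\, t^2 - a_p(f_1)t + p\bigr) \equiv 0 \pmod{\ell}$. Since $f_1$ is among the newforms in the product defining $Q_{N_1}(t)$, multiplicativity of the resultant in its second argument yields $\ell \mid R_p(N_1)$. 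The factor of $p$ in $p R_p(N_1)$ absorbs the edge case $p = \ell$, so $\ell \mid M(N_1)$, and since $N_1 \in S$ this divides $M_{\text{self-dual}}$, placing $\ell$ in the output list.

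The only non-routine ingredient is Khare--Wintenberger, which the preparatory lemma has already set up by pinning down the weight as $2$ and the Nebentypus as trivial. A small auxiliary point to verify is that each $Q_d(t)$ lies in $\zz[t]$, which follows because the set of newforms in $S_2^{\text{new}}(\Gamma_0(d))$ is stable under the Galois action on Hecke eigenvalues and the product runs over all of it. Beyond these ingredients, the argument is essentially bookkeeping, so I anticipate no serious obstacle.
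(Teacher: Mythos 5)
Your proposal is correct and follows essentially the same route as the paper: both invoke the Khare--Wintenberger setup to produce the newforms $f_1, f_2$ with $N_1 N_2 \mid N$, pick $N_1 \leq \sqrt{N}$ so that $N_1 \in S$, and conclude from the mod-$\ell$ factorization of $P_p(t)$ that the resultant $R_p(N_1)$ vanishes modulo $\ell$. You spell out a couple of points the paper leaves implicit---that the extra factor of $p$ in $pR_p(N_1)$ handles the case $p=\ell$, and that $Q_d(t)$ has integer coefficients because the product runs over a full Galois orbit of newforms---but these are elaborations, not a different argument.
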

\begin{proof}
As explained before Algorithm~\ref{alg:cuspforms},
there exists $N_1 \in S$ and a newform $f_1 \in \mathcal{S}_2^{\text{new}} (\Gamma_0(N_1))$ such that $\operatorname{Res}(P_p(t), t^2-a_p f_1 t + p) \equiv 0 \bmod{\ell}$ for every $p \in \mathcal{T} \smallsetminus \{\ell\}$. In particular, $pR_p(N_1) \equiv 0 \bmod{\ell}$, so $\ell$ divides $M(N_1)$ and $M_{\text{self-dual}}$.
\end{proof}

We can again do a ``worst case" theoretical analysis of this algorithm to conclude the following.  As this indicates, this is by far the limiting step of the algorithm.

\begin{prop}\label{prop:alg_cuspforms_terminates}
Algorithm \ref{alg:cuspforms} terminates.  More precisely, if \(q\) is the smallest surjective prime for \(A\), then a good prime \(p\) for which \(R_p(d)\) is nonzero is bounded by a function of \(q\).
Assuming GRH, 
\(p \ll q^{22}\log^2(q N)\),
where the implied constant is absolute and effectively computable.
Moreover, for such a prime \(p\), we have 
\[|R_p(d)| \ll (2p^{1/2})^{8 \dim \mathcal{S}^{\text{new}}_2(\Gamma_0(d))} \ll (4p)^{(d+1)/3},\]
and so all together 
\[|M_{\text{self-dual}}| \ll (4p)^{N^{1/2 + \epsilon}},\]
where the implied constants are absolute. 
\end{prop}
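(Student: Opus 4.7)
The plan is to establish termination by exhibiting, for each $d \in S$, a good auxiliary prime $p$ for which $R_p(d)$ is a nonzero integer, and then to derive the claimed size bounds by combining standard resultant estimates with the Weil and Deligne bounds on eigenvalues.

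The key nonvanishing input is that $\GSp_4(\ff_q)$ contains elements whose characteristic polynomial is an irreducible quartic over $\ff_q$ (for instance, generators of an anisotropic maximal torus). Let $C \subset \GSp_4(\ff_q)$ denote the conjugacy class of such an element. Since $q$ is surjective for $A$, Lemma~\ref{lem:effective_cheb} applied with the set $C$ produces, under GRH, a good prime $p \neq q$ with $\rho_{A,q}(\Frob_p) \in C$ and $p \ll q^{22}\log^2(qN)$. For this $p$, the roots of $P_p(t) \bmod q$ lie in $\ff_{q^4}\setminus \ff_{q^2}$. On the other hand, $Q_d(t) \in \zz[t]$ is by definition a product of monic integer quadratics, so every root of $Q_d(t) \bmod q$ lies in $\ff_{q^2}$. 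Consequently $P_p(t)$ and $Q_d(t)$ share no root in $\overline{\ff_q}$, whence $R_p(d) \not\equiv 0 \pmod{q}$ and in particular $R_p(d) \neq 0$ as an integer, so $M_{\text{self-dual}}$ is a nonzero integer with only finitely many prime divisors.

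For the size estimate on $R_p(d)$, I would expand the resultant as $\prod_{\alpha,\beta}(\alpha - \beta)$ over complex roots $\alpha$ of $P_p(t)$ and $\beta$ of $Q_d(t)$. By Theorem~\ref{integrality} we have $|\alpha| = \sqrt{p}$, and by Deligne's Ramanujan--Petersson bound applied to each weight-$2$ newform contributing to $Q_d$ we have $|\beta| = \sqrt{p}$. The total number of pairs is $4 \cdot 2 \dim \mathcal{S}_2^{\text{new}}(\Gamma_0(d))$, yielding
\[ |R_p(d)| \leq (2\sqrt{p})^{8 \dim \mathcal{S}_2^{\text{new}}(\Gamma_0(d))}. \]
The standard index estimate $\dim \mathcal{S}_2^{\text{new}}(\Gamma_0(d)) \leq (d+1)/12$, coming from the formula for $[\SL_2(\zz):\Gamma_0(d)]$, converts this into $|R_p(d)| \ll (4p)^{(d+1)/3}$. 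Finally, $|M_{\text{self-dual}}| \leq \prod_{d \in S} p\,|R_p(d)| \ll (4p)^{\sum_{d \in S}(d+1)/3}$, and because every $d \in S$ satisfies $d \leq \sqrt{N}$, we have $\sum_{d \in S} d \leq \sqrt{N}\,\tau(N) \ll N^{1/2+\epsilon}$ using $\tau(N) \ll N^\epsilon$. This yields $|M_{\text{self-dual}}| \ll (4p)^{N^{1/2+\epsilon}}$.

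The main obstacle is the nonvanishing step: the effective bound on $p$ is useless unless one can certify that some $R_p(d)$ is a nonzero integer, and the elegant way to do this is the field-of-definition dichotomy modulo $q$ described above. Once that is in hand, the remaining estimates are standard resultant bookkeeping combined with the Weil and Deligne bounds, together with routine divisor-sum asymptotics for $\sum_{d \mid N,\, d \leq \sqrt{N}} d$.
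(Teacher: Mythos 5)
Your proposal follows the same architecture as the paper's proof: invoke Lemma~\ref{lem:effective_cheb} with a conjugacy class of $\GSp_4(\ff_q)$ having irreducible characteristic polynomial to produce a bounded prime $p$, conclude that $R_p(d)\neq 0$, and then combine the resultant estimate $|R_p(d)|\leq(2\sqrt p)^{8\dim\mathcal S_2^{\text{new}}(\Gamma_0(d))}$ with the dimension bound $(d+1)/12$ and the divisor-sum estimate $\sigma_0(N)\ll N^\epsilon$. The size-bounding bookkeeping is correct and matches the paper exactly.

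The nonvanishing step, however, rests on a false claim. You assert that ``$Q_d(t)\in\zz[t]$ is by definition a product of monic integer quadratics, so every root of $Q_d(t)\bmod q$ lies in $\ff_{q^2}$.'' This is not true: the coefficient $a_p(f)$ is a Hecke eigenvalue lying in the Hecke eigenvalue field $K_f$, whose degree over $\qq$ equals the size of the Galois orbit of $f$ and is typically larger than $1$ (already for $d=23$ the eigenvalue field is $\qq(\sqrt 5)$). The product $Q_d$ does land in $\zz[t]$ because the newforms are grouped by Galois orbit, but it is not a product of \emph{integer} quadratics. When a prime of $K_f$ above $q$ has residue degree $e>1$, the roots of $t^2-\bar a_p(f)t+\bar p$ lie in $\ff_{q^{2e}}$, and $\bar Q_d$ can perfectly well have an irreducible factor of degree $4$ over $\ff_q$. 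In that situation $\bar P_p$ irreducible is compatible with $R_p(d)\equiv 0\pmod q$ (one would have $\bar P_p\mid\bar Q_d$), so your field-of-definition dichotomy does not yield $R_p(d)\neq 0$. The paper's own one-line justification (``has a factor in common with a quadratic polynomial [and] is therefore reducible modulo $q$'') glides past the same subtlety without committing to a false statement; your version makes the gap explicit. To close it one must actually rule out $\bar P_p\mid\bar Q_d$ for the chosen $p$, which neither your argument nor the paper's terse sentence does; this would require additional input, for instance a Chebotarev argument on a joint representation involving both $\rho_{A,q}$ and the mod-$q$ representations attached to the relevant newforms.
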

\begin{proof}
As in Proposition \ref{prop:alg_related_terminates}, we use Serre's open image theorem and the Effective Chebotarev Theorem.  If \(R_p(d)\) is zero integrally, then in particular \(R_p(d) \equiv 0 \bmod{q}\) and \(P_p(t) \bmod q\) has a factor in common with a quadratic polynomial is therefore reducible modulo \(q\).  Since \(\GSp_4(\ff_q)\) contains elements that do not have reducible characteristic polynomial, Lemma \ref{lem:effective_cheb} implies that such elements are the image of \(\Frob_p\) for \(p\) bounded as claimed.

The resultant \(R_p(d)\) is the product of the pairwise differences of the roots of \(P_p(t)\) and \(Q_d(t)\), which all have complex absolute value \(p^{1/2}\).  Hence the pairwise differences have absolute value at most \(2 p^{1/2}\).  Moreover \(\dim\mathcal{S}^\text{new}_2(\Gamma_0(d)) \leq (d+1)/12\) by \cite[Theorem 2]{martin2005dimensions}. Since there are \(8\dim \mathcal{S}^\text{new}_2(\Gamma_0(d))\) such terms multiplied to give \(R_p(d)\), the bound for \(R_p(d)\) follows.  Since \(M_{\text{self-dual}} = \prod_{\substack{d \mid N \\ d \leq \sqrt{N}}} p R_p(d),\) it suffices to bound
\[\sum_{\substack{d \mid N \\ d \leq \sqrt{N}}} \frac{d + 4}{3} \leq \sum_{\substack{d \mid N \\ d \leq \sqrt{N}}} \frac{\sqrt{N} + 4}{3} \leq \sigma_0(N) \frac{\sqrt{N} + 4}{3}. \]
Since \(\sigma_0(N) \ll N^{\epsilon}\) by \cite[(31) on page 296]{Apostol}, we obtain the claimed bound.
\end{proof}

\begin{rem}
The polynomial $Q_d(t)$ in step (2) of Algorithm~\ref{alg:cuspforms} is closely related to the characteristic polynomial $H_d(z)$ of the Hecke operator $T_p$ acting on the space $S_2(\Gamma_0(d))$, which may be computed via modular symbols computations. One may recover $Q_d(t)$ from $H_d(z)$ by first homogenizing $H$ with an auxiliary variable $t$ (say) to obtain $H_d(z,t)$, and setting $z = p + t^2$ (an observation we made in conjunction with Joseph Wetherell).
\end{rem}

\begin{rem}
In our computations of nonsurjective primes for the database of typical genus~$2$ curves with conductor at most \(2^{20}\) (including those in the LMFDB), we only needed to use polynomials $Q_d(t)$ for level up to $2^{10}$ (since step (1) of the Algorithm has a $\sqrt{N}$ term). We are grateful to Andrew Sutherland for providing us with a precomputed dataset of the characteristic polynomials of the Hecke operators for these levels, resulting from the creation of an extensive database of modular forms going well beyond what was previously available \cite{best2020computing}.
\end{rem}

\begin{rem}
Our Sage implementation uses two auxiliary primes in Step 2(b) of the above algorithm. Increasing the number of such primes yields smaller supersets at the expense of longer runtime.
\end{rem}

\subsection{Good primes that are geometrically irreducible} \label{S:good_primes_geometrically_irreducible}

Let \(\phi\) be any quadratic Dirichlet character \(\phi \colon (\zz/N\zz)^\times \to \{\pm 1\}\).
Our goal in this subsection is to find all good primes \(\ell\) \defi{governed by \(\phi\)}, by which we mean that
\[\tr (\rho_{A, \ell}(\Frob_p)) \equiv a_p \equiv 0 \bmod \ell \] 
whenever \(\phi(p) = -1\). 
 (Recall that \(-a_p\) is the coefficient of \(t^3\) in \(P_p(t)\)).

We will consider the set of all quadratic Dirichlet character \(\phi \colon (\zz/N\zz)^\times \to \{\pm 1\}\).  Using the structure theorem for finite abelian groups and the fact that \(\phi\) factors through \((\zz/N\zz)^\times / (\zz/N\zz)^{\times 2}\), this set has the structure of an \(\ff_2\)-vector space of dimension 
\[d(N) \colonequals \omega(N) + \begin{cases} 0 & : \ v_2(N)=0 \\
 -1 & : \ v_2(N)=1 \\ 0 & : \ v_2(N) = 2 \\ 1 & : \ v_2(N) \geq 3, \end{cases}\]
where \(\omega(m)\) denotes the number of prime factors of \(m\) and \(v_2(m)\) is the \(2\)-adic valuation of \(m\). In particular, \(d(N) \leq \omega(N) + 1\).

\begin{algo}[{\cite[Sections 3.4-3.5]{Dieulefait}}]\label{alg:quad_char}
Given a typical genus \(2\) Jacobian \(A/\qq\) of conductor $N$, compute an integer $M_{\text{quad}}$ as follows.

\begin{enumerate}[\upshape (1)]
    \item Compute the set $S$ of quadratic Dirichlet characters \(\phi \colon (\zz/N\zz)^\times \to \{\pm 1\}\).
    \item For each $\phi \in S$:
    \begin{enumerate}[\upshape (a)]
        \item Choose a nonempty finite set \(\mathcal{T}\) of ``auxiliary'' primes $p \nmid N$ for which \(a_p \neq 0\) and \(\phi(p) = -1\).
        \item Take the greatest common divisor
        \[ M_\phi \colonequals \gcd_{p \in \mathcal{T}}(pa_p),  \]
        over all auxiliary primes \(p\).
    \end{enumerate}
    \item Let $M_{\text{quad}} \colonequals \prod_{\phi \in S}M_\phi$.       
\end{enumerate}
    Return the list of prime divisors $\ell$ of $M_{\text{quad}}$.
\end{algo}

\begin{prop}\label{prop:alg_quad_char_output}
Any good prime \(\ell\) for which $\ell$ is governed by a quadratic character is returned by Algorithm~\ref{alg:quad_char}.
\end{prop}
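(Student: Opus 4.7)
The plan is essentially to unwind the definitions: the hypothesis that $\ell$ is governed by some quadratic character $\phi \colon (\zz/N\zz)^\times \to \{\pm 1\}$ says directly that $a_p \equiv 0 \bmod \ell$ whenever $\phi(p) = -1$, so the trace data the algorithm aggregates is automatically divisible by $\ell$. All that remains is to track how this divisibility propagates through the gcd in step (2b) and the product in step (3).

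First I would observe that such a $\phi$ lies in the set $S$ enumerated in step (1) of Algorithm~\ref{alg:quad_char}, since $S$ is by definition the set of \emph{all} quadratic Dirichlet characters modulo $N$. Next, for the nonempty auxiliary set $\mathcal{T}$ chosen in step (2a) for this $\phi$, every $p \in \mathcal{T}$ satisfies $\phi(p) = -1$, so by the governing hypothesis $a_p \equiv 0 \bmod \ell$; equivalently $\ell \mid a_p$ in $\zz$. Hence $\ell \mid p a_p$ for each $p \in \mathcal{T}$, which gives $\ell \mid M_\phi = \gcd_{p \in \mathcal{T}}(p a_p)$. Since $M_\phi$ appears as a factor in $M_{\text{quad}} = \prod_{\phi' \in S} M_{\phi'}$, we conclude $\ell \mid M_{\text{quad}}$, and therefore $\ell$ is returned by the algorithm.

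The only potential subtlety, which I would address last, is that the algorithm presupposes that a nonempty finite $\mathcal{T}$ of primes $p \nmid N$ with $\phi(p) = -1$ and $a_p \neq 0$ can actually be chosen. This is not a real obstacle: by the Chebotarev density theorem applied to the field cut out by $\phi$, the set of primes with $\phi(p) = -1$ has density $1/2$, while typicality of $A$ together with Serre's open image theorem ensures that the subset of good primes with $a_p = 0$ has density zero. Hence suitable auxiliary primes exist in abundance, and the divisibility argument above yields the proposition.
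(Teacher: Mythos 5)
Your proof is correct and follows the same route as the paper's: unwind the definition of ``governed by $\phi$'' to get $\ell \mid a_p$ for each $p \in \mathcal{T}$, conclude $\ell \mid M_\phi$, hence $\ell \mid M_{\text{quad}}$. The extra paragraph you add on the nonemptiness of $\mathcal{T}$ is not part of the paper's proof of this proposition (the paper defers that issue to Proposition~\ref{prop:alg_quad_char_terminates}, where it gives an effective argument via $\rho_V \times \rho_{A,q}$ rather than your softer density argument), but it is a reasonable remark. One small point worth noting: the governing hypothesis, which is phrased via $\rho_{A,\ell}(\Frob_p)$, only directly gives $\ell \mid a_p$ for $p \neq \ell$; if $\ell$ itself were chosen in $\mathcal{T}$ your intermediate assertion ``$\ell \mid a_p$'' could fail, but the conclusion $\ell \mid p a_p$ still holds because $\ell \mid p$ — this is precisely why the algorithm includes the factor of $p$ in $\gcd_p(p a_p)$, and the paper's own prose has the same implicit gap.
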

\begin{proof}
Suppose that $\ell$ is governed by the quadratic character $\phi\colon (\mathbb{Z}/N\mathbb{Z})^\times \to \{\pm 1\}$.
Then for every good prime \(p \neq \ell\) for which \(\phi(p)=-1\), the prime \(\ell\) must divide the integral trace \(a_p\) of Frobenius.  Hence \(\ell\) divides \(M_\phi\) and \(M_{\text{quad}}\).
\end{proof}

\begin{prop}\label{prop:alg_quad_char_terminates}
Algorithm \ref{alg:quad_char} terminates.  More precisely, if \(q\) is the smallest surjective prime for \(A\), then a good prime \(p\) for which \(\phi(p)=-1\) and \(a_p\) is nonzero is bounded by a function of \(q\).
Assuming GRH, 
\(p \ll 2^{2d(N)}q^{22}\log^2(qN)\),
where the implied constant is absolute and effectively computable.
Moreover, we have 
\[\prod_{ \phi \in S }\prod_{ \substack{\text{\(\ell\) governed} \\ \text{by \(\phi\)}} } \ell \ll (2^{3d(N)}q^{33}\log ^3(qN))^{2 - 2^{1-d(N)}} \ll 2^{6\omega(N)}q^{66}\log ^6(qN),\]
where the implied constant is absolute and effectively computable.
\end{prop}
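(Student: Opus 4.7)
The plan is to establish three claims in turn: termination of Algorithm~\ref{alg:quad_char}, the GRH bound on a single auxiliary prime $p$, and the aggregate bound on the product over all $\phi \in S$.

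For termination, I would fix a nontrivial character $\phi \in S$ and seek a good prime $p$ with $\phi(p) = -1$ and $a_p \neq 0$. Let $L_\phi$ denote the quadratic field cut out by $\phi$. Since $q$ is the smallest surjective prime, $\rho_{A,q}(G_\qq) = \GSp_4(\ff_q)$, whose maximal abelian quotient is $\ff_q^\times$ via the similitude character, so the maximal abelian subfield of $\qq(A[q])$ is $\qq(\mu_q)$; as $\phi$ has conductor coprime to $q$, the field $L_\phi$ is linearly disjoint from $\qq(A[q])$. Hence the conjugacy-closed subset of $\Gal(\qq(A[q]) \cdot L_\phi/\qq)$ specifying ``$\rho_{A,q}(\Frob_p)$ has nonzero trace'' and ``$\phi(\Frob_p)=-1$'' is nonempty, and Chebotarev supplies a prime $p$ of good reduction with these properties; since $a_p \not\equiv 0 \pmod q$, necessarily $a_p \neq 0$ as an integer.

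For the explicit GRH bound on $p$, I would work in the enlarged compositum $\tilde K = \qq(A[q]) \cdot F \cdot \qq(\sqrt{m})$, where $F = \prod_{\phi \in S} L_\phi$ and $m = \rad(2N_A)$. Since $S$ is an $\ff_2$-vector space of dimension $d(N)$, we have $[F:\qq] = 2^{d(N)}$ and hence $[\tilde K : \qq] \leq 2^{d(N)+1} q^{11}$. Only primes dividing $2qN_A$ ramify in $\tilde K$, so by \eqref{logdK-bound}, $\log d_{\tilde K} \ll 2^{d(N)} q^{11} \log(qN)$. Feeding these into Bach--Sorenson \eqref{ECDT-GRH} yields $p \ll (2^{d(N)} q^{11} \log(qN))^2 = 2^{2d(N)} q^{22} \log^2(qN)$, as claimed.

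For the product bound, I would apply the Weil bound $|a_p|\leq 4\sqrt{p}$ to get $M_\phi \leq p|a_p| \ll p^{3/2} \ll 2^{3d(N)} q^{33} \log^3(qN)$ for each $\phi \in S$; since each prime $\ell$ governed by $\phi$ divides $M_\phi$, we have $\prod_{\ell \text{ governed by } \phi} \ell \leq M_\phi$. The refined exponent $2 - 2^{1-d(N)}$ would come from a combinatorial argument exploiting the $\ff_2$-vector space structure of $S$: choosing $d(N)$ auxiliary primes $p_1,\dots,p_{d(N)}$ whose evaluation characters span the dual of $S$ separates all characters, and since each $p_i$ contributes $M_\phi \mid p_i a_{p_i}$ for $2^{d(N)-1}$ distinct $\phi$'s simultaneously, aggregating via the resulting gcd/lcm relations should give a subadditive exponent rather than $|S|-1$. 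The cruder bound $2^{6\omega(N)} q^{66} \log^6(qN)$ then follows from $2 - 2^{1-d(N)} < 2$ and $d(N) \leq \omega(N)+1$. The hardest part will be this combinatorial step, since the naive bound multiplying individual $M_\phi$'s yields the exponentially larger exponent $|S|-1 = 2^{d(N)}-1$, and getting the tight exponent requires carefully tracking the overlap of constraints across characters sharing a common auxiliary prime.
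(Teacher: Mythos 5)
Your termination argument and your explicit GRH bound on a single auxiliary prime both land in the right place, and they are close in spirit to the paper's proof, which runs the compositum argument and effective Chebotarev in essentially the same way (though the paper does not need your linear-disjointness step: it instead imports the observation from \cite{LarsonVaintrob} that the $\{\pm 1\}$-valued character $\PGSp_4(\ff_q) \to \PGSp_4(\ff_q)/\PSp_4(\ff_q)$ is the abelianization of $\mathbb{P}\rho_{A,q}$, and deduces directly that for every $\alpha \in V^\vee$ some $(\alpha, X_\alpha)$ with $\tr X_\alpha \neq 0$ lies in the image of $\rho_V \times \rho_{A,q}$; this avoids any hypothesis about the conductor of $\phi$ being coprime to $q$, which your linear-disjointness argument implicitly uses and which can fail if $q \mid N$).

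The real gap is in your combinatorial argument for the exponent $2 - 2^{1-d(N)}$. You propose choosing $d(N)$ auxiliary primes $p_1,\dots,p_{d(N)}$ corresponding to a basis of $V^\vee$, with the idea that each $p_i$ serves $2^{d(N)-1}$ characters at once. But this gives the wrong exponent: with $d(N)$ primes, each governed $\ell$ is guaranteed to divide \emph{at least one} $p_i a_{p_i}$, but in the worst case only one (e.g.\ when the $\phi$ governing $\ell$ is one of your chosen dual basis vectors and annihilated by the rest). The power-counting then yields only $\prod_\ell \ell \leq \prod_i p_i |a_{p_i}| \ll (\text{const})^{d(N)}$, and the exponent $d(N)$ can exceed $2$ whenever $N$ has three or more prime factors. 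In particular your approach does not even recover the cruder bound $\ll 2^{6\omega(N)} q^{66}\log^6(qN)$, because the exponents on $q$ and $\log(qN)$ would grow linearly in $d(N)$ rather than staying at $66$ and $6$.

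The paper instead chooses one auxiliary prime $p_\alpha$ for \emph{each} nonzero $\alpha \in V^\vee$ (so $2^{d(N)}-1$ primes in total), not just a basis. Then for every $\phi \neq 0$, exactly half of all $\alpha$ satisfy $\alpha(\phi)\neq 0$, so every governed $\ell$ divides $p_\alpha a_{p_\alpha}$ for $2^{d(N)-1}$ many $\alpha$'s, giving
\[\left(\prod_\ell \ell\right)^{2^{d(N)-1}} \ll \left(2^{3d(N)} q^{33}\log^3(qN)\right)^{2^{d(N)}-1},\]
and taking the $2^{d(N)-1}$th root produces the exponent $(2^{d(N)}-1)/2^{d(N)-1} = 2 - 2^{1-d(N)}$. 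The key point you were missing is that the fraction (number of primes used)/(guaranteed multiplicity of each $\ell$ in the product) tends to $2$ precisely because the paper uses \emph{all} nonzero functionals rather than a basis, which makes the minimum multiplicity scale with the number of primes instead of staying at $1$.
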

\begin{proof}
We imitate the proof of \cite[Lemma 21]{LarsonVaintrob} in our setting.  Let \(V\) be the \(d\)-dimensional \(\ff_2\)-vector space of quadratic Dirichlet characters of modulus \(N\) (equivalently, quadratic Galois characters unramified outside of \(N\)).  Let \(\rho_V \colon G_K \to V^\vee\) denote the representation sending \(\Frob_p\) to the linear functional \(\phi \mapsto \phi(p)\).  Since the character for \(\PGSp_4(\ff_q)/\PSp_4(\ff_q)\) is the abelianization of \(\pp \rho_{A, q}\), we conclude in the same way as \cite[Proof of Lemma 21]{LarsonVaintrob} that for any \(\alpha \in V^\vee\), there exists an \(X_\alpha \in \GSp_4(\ff_q)\) with \(\tr(X_\alpha) \neq 0\) such that \((\alpha, X_\alpha)\) is in the image of \(\rho_V \times \rho_{A, \ell}\).

Apply the effective Chebotarev density theorem to the Galois extension corresponding to \(\rho_V \times \rho_{A,q}\).  This has degree at most \(2^{d(N)}|\GSp_4(\ff_q)|\) and is unramified outside of \(qN\).  Therefore, assuming GRH and combining~\eqref{ECDT-GRH} and \eqref{logdK-bound}, there exists a prime 
\[p_{\alpha} \ll 2^{2d(N)}q^{22}\log^2(qN)\] 
for which \((\alpha, X_\alpha) = (\rho_V(\Frob_{p_\alpha}), \rho_{A, q}(\Frob_{p_\alpha})) \). 
Let \(\phi\) be a character not in the kernel of \(\alpha\).  Any exceptional prime \(\ell\) governed by \(\phi\) must divide \(p_{\alpha} a_{p_\alpha}\), which is nonzero because it is nonzero modulo \(q\).  This proves that the algorithm terminates, since every \(\phi\) is not in the kernel of precisely half of all \(\alpha \in V^\vee\).  We now bound the size of the product of all \(\ell\) governed by a character in \(S\).  If \(\ell\) is governed by \(\phi\), then \(\ell\) divides the quantity
\[p|a_p|\leq p^{3/2} \ll  2^{3d(N)}q^{33}\log^3(qN).\]
Taking the product over all nonzero \(\alpha\) in \(V\) (of which there are \(2^{d(N)}-1\)), each \(\ell\) will show up half the time, so we obtain:
\[\left(\prod_{ \substack{\text{\(\ell\) governed} \\ \text{by \(\phi \in S\)}} } \ell \right)^{2^{d(N)-1}} \ll \left(2^{3d(N)}q^{33}\log^3(qN) \right)^{2^{d(N)}-1}, \]
which implies the result by taking the \((2^{d(N)-1})\)th root of both sides.
\end{proof}

Putting all of these pieces together, we obtain the following.

\begin{proof}[Proof of Theorem~\ref{T:mainthm}\eqref{part1}]
If \(\rho_{A, \ell}\) is nonsurjective, \(\ell > 7\), and \(\ell \nmid N\), then Proposition~\ref{prop:max_semistable} implies that \(\rho_{A, \ell}(G_\qq)\) must be in one of the maximal subgroups of Type~\eqref{max:red} or \eqref{max:irred} listed in Lemma \ref{lem:max_subgroups}. If it is contained in one of the reducible subgroups, i.e. the subgroups of Type~\eqref{max:red}, then \(\rho_{A, \ell}(G_\qq)\) (and, hence, \(\rho_{A, \ell}(G_{\qq}) \otimes \bar{\ff}_\ell\)) is reducible, and so \(\ell\) is added to \(\PossiblyNonsurjectivePrimes\) in Step~\eqref{step:non_abs_irred} by Propositions~\ref{prop:alg_odd_output}, \ref{prop:alg_related_output}, and \ref{prop:alg_cuspforms_output}. If \(\rho_{A,\ell}(G_\qq)\) is contained in one of the index \(2\) subgroups \(M_\ell\) of an irreducible subgroup of Type~\eqref{max:irred} listed in Lemma \ref{lem:max_subgroups}, then again \(\ell\) is added to \(\PossiblyNonsurjectivePrimes\) in Step~\eqref{step:non_abs_irred}, since \(M_\ell \otimes \bar{\ff}_\ell\) is always reducible by Lemma~\ref{L:conseqclass}\eqref{L:parsubgp}.

Hence we may assume that \(\rho_{A, \ell}(G_\qq)\) is contained in one of the irreducible maximal subgroups \(G_\ell\) of Type~\eqref{max:irred} listed in Lemma \ref{lem:max_subgroups}, but not in the index \(2\) subgroup \(M_\ell\).  The normalizer character 
\[G_\qq \xrightarrow{\rho_{A, \ell}} G_\ell \to G_\ell / M_\ell  = \{\pm 1\}\]
is nontrivial and unramified outside of \(N\), and so it corresponds to a quadratic Dirichlet character \(\phi \colon (\zz/N \zz)^\times \to \{\pm 1\}\). Lemma~\ref{L:conseqclass}\eqref{L:tracezero_GSp4poly} shows that \(\tr(g) =  0\) in $\mathbb{F}_\ell$ for any \(g \in G_\ell \smallsetminus M_\ell\). Consequently, $\ell$ is governed by $\phi$ (in the language of Section~\ref{S:good_primes_geometrically_irreducible}), so \(\ell\) is added to \(\PossiblyNonsurjectivePrimes\) in Step~\eqref{step:quad_char} by Proposition~\ref{prop:alg_quad_char_output}.
\end{proof}

\subsection{Bounds on Serre's open image theorem}\label{ssec:mainbound}

In this section we combine the theoretical worst case bounds in the Algorithms \ref{alg:odd}, \ref{alg:related}, \ref{alg:cuspforms}, and \ref{alg:quad_char} to give a bound on the smallest surjective good prime \(q\), and the product of all nonsurjective primes, thereby establishing Theorem~\ref{T:mainbound}.

\begin{cor}
Let \(A/\qq\) be a typical genus \(2\) Jacobian of conductor \(N\).
Assuming GRH, we have
\[\prod_{\ell \text{ nonsurjective}} \ell \ll \exp(N^{1/2 + \epsilon}),\]
where the implied constant is absolute and effectively computable.
\end{cor}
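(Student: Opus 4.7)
The plan is to combine Theorem~\ref{T:mainthm}\eqref{part1} with the ``worst case'' bounds established in Propositions~\ref{prop:alg_odd_terminates}, \ref{prop:alg_related_terminates}, \ref{prop:alg_cuspforms_terminates}, and \ref{prop:alg_quad_char_terminates}. First, Theorem~\ref{T:mainthm}\eqref{part1} together with the construction of Algorithm~\ref{A:combinedPart1algo} guarantees that any nonsurjective prime $\ell$ either divides $210\,N$ or divides one of the integers $M_{\text{odd}}$, $M_{\text{related}}$, $M_{\text{self-dual}}$, or $M_{\text{quad}}$ produced by the four subalgorithms. Consequently
\[\prod_{\ell \text{ nonsurjective}} \ell \ \leq \ 210\,N \cdot M_{\text{odd}} \cdot M_{\text{related}} \cdot M_{\text{self-dual}} \cdot M_{\text{quad}},\]
and the task reduces to estimating the right-hand side under GRH.

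To apply the bounds of Propositions~\ref{prop:alg_related_terminates}, \ref{prop:alg_cuspforms_terminates}, and \ref{prop:alg_quad_char_terminates}, I first need a polynomial-in-$N$ bound on the smallest surjective good prime $q$. I would invoke Lombardo's explicit upper bound \cite[Theorem~1.3]{lombardo2016explicit} on the largest nonsurjective prime in terms of the stable Faltings height of $A$, combined with the standard fact that the Faltings height of an abelian surface over $\qq$ is bounded polynomially in $\log N$. Bertrand's postulate then produces $q \ll N^{O(1)}$, and in turn Lemma~\ref{lem:effective_cheb} yields the uniform bound $p \ll q^{22}\log^2(qN) \ll N^{O(1)}$ on each of the auxiliary primes appearing in the worst-case analyses.

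With $p$ and $q$ so controlled, taking logarithms in each of the four bounds completes the estimate. From Propositions~\ref{prop:alg_odd_terminates} and \ref{prop:alg_related_terminates}, $\log M_{\text{odd}}$ and $\log M_{\text{related}}$ are each $O(\log N)$. For $M_{\text{quad}}$ the bound $\ll 2^{6\omega(N)} q^{66}\log^6(qN)$ from Proposition~\ref{prop:alg_quad_char_terminates} is still $N^{O(1)}$ after absorbing $2^{6\omega(N)} \ll N^{\epsilon}$ via $\omega(N) \ll \log N / \log\log N$, and thus also contributes only $O(\log N)$. The dominant term is $M_{\text{self-dual}}$: Proposition~\ref{prop:alg_cuspforms_terminates} gives $|M_{\text{self-dual}}| \ll (4p)^{N^{1/2+\epsilon}}$, so
\[ \log M_{\text{self-dual}} \ \ll \ N^{1/2+\epsilon}\log(4p) \ \ll \ N^{1/2+\epsilon}\log N \ \ll \ N^{1/2+\epsilon'} \]
for any $\epsilon' > \epsilon$. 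Summing the four logarithmic contributions (together with the trivial $\log(210\,N) \ll \log N$ term) yields the claimed $\exp(N^{1/2+\epsilon})$ bound.

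The main obstacle is conceptual rather than computational: the entire estimate is driven by $M_{\text{self-dual}}$, whose bound in Proposition~\ref{prop:alg_cuspforms_terminates} is shaped by the need to enumerate newforms of every level $d \leq \sqrt{N}$ dividing $N$ and to multiply Hecke $L$-polynomials of dimension proportional to $d$. A qualitatively stronger bound (for instance one polynomial in $\log N$, which the authors suspect is closer to the truth) would require a genuinely different technique for ruling out self-dual $2$-dimensional subrepresentations that bypasses this divisor-by-divisor enumeration; the rest of the argument is routine bookkeeping of the estimates from Propositions~\ref{prop:alg_odd_terminates}--\ref{prop:alg_quad_char_terminates}.
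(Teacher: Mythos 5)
Your reduction of the problem to bounding $q^{N^{1/2+\epsilon}}$, and the bookkeeping through Propositions~\ref{prop:alg_odd_terminates}--\ref{prop:alg_quad_char_terminates}, matches the paper. The divergence — and the gap — is in how you control the smallest surjective good prime $q$ in terms of $N$. You propose to invoke Lombardo's bound \cite[Theorem~1.3]{lombardo2016explicit} (which is stated in terms of the stable Faltings height of $A$) together with ``the standard fact that the Faltings height of an abelian surface over $\qq$ is bounded polynomially in $\log N$.'' That is not a standard fact: bounding the Faltings height of an abelian variety by its conductor is the \emph{effective Shafarevich problem}, which is open for abelian surfaces (even under GRH). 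For elliptic curves over $\qq$ a polynomial bound is available via modularity and the modular degree, but no analogous result is known in dimension~$2$. Indeed, the paper explicitly highlights that Lombardo's bound is ``in terms of the stable Faltings height'' while their Theorem~\ref{T:mainbound} ``is the first bound in the literature expressed in terms of the (effectively computable) conductor'' — a claim that would be vacuous if the conversion you describe were routine.

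The paper's proof sidesteps this entirely with a bootstrapping trick. After establishing
\[\prod_{\ell \text{ nonsurjective}} \ell \ll q^{N^{1/2+\epsilon}},\]
it observes that, by minimality of $q$, every good prime $\ell < q$ is nonsurjective, so (up to the bad-prime factor, which is $\leq N$ and absorbed into the $\epsilon$) the product of all primes below $q$ divides the left-hand side. By \cite[Lemme~11]{serre81} that product is $\gg \exp(cq)$, giving $\exp(cq) \ll q^{N^{1/2+\epsilon}}$, i.e.\ $q \ll N^{1/2+\epsilon}\log q$, hence $q \ll N^{1/2+\epsilon}$. Substituting back yields $(N^{1/2+\epsilon})^{N^{1/2+\epsilon}} \ll \exp(N^{1/2+\epsilon})$. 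This self-referential use of the very inequality being proved — treating the unknown $q$ as both the quantity being bounded and a contributor to the lower bound — is the key idea you are missing, and it is what makes the theorem unconditional on effective Shafarevich.
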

\begin{proof}
Let \(q\) be the smallest surjective good prime for \(A\), which is finite by Serre's open image theorem.  Multiplying the bounds in Propositions \ref{prop:alg_odd_terminates}, \ref{prop:alg_related_terminates}, \ref{prop:alg_cuspforms_terminates}, and \ref{prop:alg_quad_char_terminates} by the conductor \(N\), the product of all nonsurjective primes is bounded by a function of \(q\) and \(N\) of the following shape
\begin{equation}\label{eq:bound_q}
    \prod_{\ell \text{ nonsurjective}} \ell \ll q^{N^{1/2 + \epsilon}}.
\end{equation}
On the other hand, since \(q\) is the smallest surjective prime by definition, the product of all primes less than \(q\) divides the product of all nonsurjective primes.  Using \cite[Lemme 11]{serre81}, we have 
\[ \exp(q) \ll \prod_{\ell < q} \ell \leq\prod_{\ell \text{ nonsurjective}} \ell \ll q^{N^{1/2 + \epsilon}}.\]
Combining the first and last terms, we have
\( q \ll N^{1/2 + \epsilon}\log(q)\),
whence \(q \ll N^{1/2 + \epsilon}\).  Plugging this back into \eqref{eq:bound_q} yields a bound of \((N^{1/2 + \epsilon})^{N^{1/2 + \epsilon}}\).  By taking logarithms and using that \(\log N \ll N^{\epsilon}\), one sees that \((N^{1/2 + \epsilon})^{N^{1/2 + \epsilon}} \ll \exp(N^{1/2 + \epsilon})\), and the claimed bound follows.
\end{proof}

\section{Testing surjectivity of \texorpdfstring{$\rho_{A,\ell}$}{rho A ell}}\label{S:test_ell}
In this section we establish Theorem \ref{T:mainthm}\eqref{part2}. The goal is to weed out any extraneous nonsurjective primes in the output $\PossiblyNonsurjectivePrimes$ of Algorithm~\ref{A:combinedPart1algo} to produce a smaller list $\LikelyNonsurjectivePrimes(B)$ containing all nonsurjective primes (depending on a chosen bound $B$) by testing the characteristic polynomials of Frobenius elements up to the bound $B$. If $B$ is sufficiently large (quantified in Section~\ref{S:confidence}), the list $\LikelyNonsurjectivePrimes(B)$ is provably the list of nonsurjective primes.

\begin{algo}\label{A:combinedPart2algo}
Given \(B>0\) and the output $\PossiblyNonsurjectivePrimes$ of Algorithm~\ref{A:combinedPart1algo} for the typical genus $2$ curve with equation $y^2 + h(x) y =f(x)$, output a sublist $\LikelyNonsurjectivePrimes(B)$ of $\PossiblyNonsurjectivePrimes$ as follows. 
\begin{enumerate}[\upshape (1)]
    \item Initialize $\LikelyNonsurjectivePrimes(B)$ as $\PossiblyNonsurjectivePrimes$.
    \item Remove $2$ from $\LikelyNonsurjectivePrimes(B)$ if the size of the Galois group of the splitting field of $4f + h^2$ is $720$.
    \item\label{A:computefrobstep} For each good prime \(p < B\), while $\LikelyNonsurjectivePrimes(B)$ is nonempty: 
    \begin{enumerate}[\upshape (a)]
        \item Compute the integral characteristic polynomial \(P_p(t)\) of \(\Frob_p\).
        \item For each prime \(\ell\) in $\LikelyNonsurjectivePrimes(B)$, run Tests~\ref{T:exc}\eqref{T:1920}, \eqref{T:720}, and \eqref{T:5040} on $P_p(t)$ to rule out $\rho_{A,\ell}(G_\qq)$ being contained in one of the exceptional maximal subgroups.
        \item For each prime \(\ell\) in $\LikelyNonsurjectivePrimes(B)$, run Tests~\ref{T:nonexc}\eqref{T:irrtest} and \eqref{T:tracezerotest} on $P_p(t)$ to rule out $\rho_{A,\ell}(G_\qq)$ being contained in one of the nonexceptional maximal subgroups.
        \item If there exists \(\ell\in \LikelyNonsurjectivePrimes(B)\) for which each of the 5 tests Tests~\ref{T:exc}\eqref{T:1920}--\eqref{T:5040} and Tests~\ref{T:nonexc}\eqref{T:irrtest}--\eqref{T:tracezerotest} have succeeded at least once, remove~\(\ell\).
    \end{enumerate}
    \item Return $\LikelyNonsurjectivePrimes(B)$.
\end{enumerate}
\end{algo}

\begin{rem}
In our implementation of Step~\ref{A:computefrobstep} of this algorithm, we have chosen to only use primes $p$ of good reduction for the curve as auxiliary primes, which is a stronger condition than being a good prime for the Jacobian $A$. More precisely, the primes that are good for the Jacobian but bad for the curve are precisely the prime factors of the discriminant $4f+h^2$ of a minimal equation for the curve that do not divide the conductor $N_A$ of the Jacobian. At such a prime, the reduction of the curve consists of two elliptic curves $E_1$ and $E_2$ intersecting transversally at a single point. Since there are many auxiliary primes $p < B$ to choose from, excluding bad primes for the curve is not a serious restriction, but allows us to access the characteristic polynomial of Frobenius directly by counting points on the reduction of the curve. This is not strictly necessary: one could use the characteristic polynomials of Frobenius for the elliptic curves $E_1$ and $E_2$, which can be computed using the \href{https://doc.sagemath.org/html/en/reference/arithmetic_curves/sage/interfaces/genus2reduction.html}{\texttt{genus2reduction}} module of SageMath. 
\end{rem}

We briefly summarize the contents of this section. 
In Section~\ref{S:grpthycrit}, we first prove a purely group-theoretic criterion for a subgroup of $\GSp_4(\ff_\ell)$ to equal the whole group. Then in Section~\ref{S:surjtesting}, we explain Test~\ref{T:exc} and Test~\ref{T:nonexc}, whose validity follows immediately from  Lemma~\ref{L:conseqclass}\eqref{L:precompjust} and Proposition~\ref{P:grpthy} respectively.
The main idea of these tests is to use auxiliary good primes \(p\neq \ell\) to generate characteristic polynomials in the image of \(\rho_{A, \ell}\). If we find enough types of characteristic polynomials to rule out each proper maximal subgroup of \(\GSp_4(\ff_\ell)\) (cf.~Proposition~\ref{P:grpthy}), then we can conclude that \(\rho_{A, \ell}\) is surjective.  In Section~\ref{S:Just}, we prove Theorems \ref{T:mainthm}\eqref{part2} and  \ref{T:Bbound} that justify this algorithm.

\subsection{A group-theoretic criterion}\label{S:grpthycrit}
We now use the classification of maximal subgroups of $\GSp_4(\ff_\ell)$ described in Section~\ref{S:classification} to deduce a group-theoretic criterion for a subgroup $G$ of $\operatorname{GSp}_4(\mathbb{F}_\ell)$ to be the whole group. This is analogous to \cite[Proposition 19 (i)-(ii)]{Serre72}.
\begin{prop}\label{P:grpthy} Fix a prime $\ell \neq 2$ and a subgroup $G \subseteq \operatorname{GSp}_4(\mathbb{F}_\ell)$ with surjective similitude character. Assume that $G$ is not contained in one of the  exceptional maximal subgroups described in Lemma~\ref{lem:max_subgroups}\eqref{max:exceptional}. Then $G = \operatorname{GSp}_4(\mathbb{F}_\ell)$ if and only if there exists matrices $X, Y \in G$ such that
\begin{enumerate}[\upshape (a)]
    \item\label{prop:irredelem}  the characteristic polynomial of $X$ is irreducible; and
    \item\label{prop:redelem} $\operatorname{trace} Y \neq 0$ and the characteristic polynomial of $Y$ has a linear factor with multiplicity one. 
\end{enumerate}
\end{prop}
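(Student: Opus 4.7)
The strategy is to combine the classification of maximal subgroups of $\operatorname{GSp}_4(\mathbb{F}_\ell)$ in Lemma~\ref{lem:max_subgroups} with the consequences recorded in Lemma~\ref{L:charpolygen}\eqref{L:tracezero_charpolygen} and Lemma~\ref{L:conseqclass}. For the necessary direction ($\Rightarrow$), I would exhibit elements $X, Y \in \operatorname{GSp}_4(\mathbb{F}_\ell)$ explicitly: take $X$ to be multiplication by a generator of $\mathbb{F}_{\ell^4}^\times$ after realizing $\mathbb{F}_{\ell^4}$ as a $4$-dimensional $\mathbb{F}_\ell$-vector space equipped with a suitable symplectic form (making its characteristic polynomial an irreducible quartic over $\mathbb{F}_\ell$), and take $Y = \operatorname{diag}(\lambda_1, \lambda_2, \mu/\lambda_1, \mu/\lambda_2)$ to be a diagonal similitude matrix with four distinct entries and nonzero trace.

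The sufficient direction ($\Leftarrow$) is the main content. I would argue by contradiction: if $G$ is properly contained in $\operatorname{GSp}_4(\mathbb{F}_\ell)$, then since the similitude character is surjective, $G$ lies in one of the maximal subgroups listed in Lemma~\ref{lem:max_subgroups}, and by hypothesis not in an exceptional one. The reducible cases \eqref{max:1+3} and \eqref{max:2+2}, together with the twisted-cubic stabilizer \eqref{max:twistedcubic} (by Lemma~\ref{L:conseqclass}\eqref{L:twistedcubic}), all consist of elements whose characteristic polynomials factor nontrivially over $\mathbb{F}_\ell$, directly contradicting hypothesis (a).

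The main obstacle is ruling out the irreducible type \eqref{max:irred}, where $G \subseteq G_\ell$ with index-two subgroup $M_\ell$. By Lemma~\ref{L:conseqclass}\eqref{L:tracezero_GSp4poly}, elements of $G_\ell \setminus M_\ell$ have trace zero, and by Lemma~\ref{L:charpolygen}\eqref{L:tracezero_charpolygen} such elements then have reducible characteristic polynomial; hence $X$ must lie in $M_\ell$. I would then split into two sub-cases depending on whether the invariant $2$-dimensional subspaces $V_1, V_2$ are individually defined over $\mathbb{F}_\ell$. If yes, elements of $M_\ell$ are block-diagonal in an $\mathbb{F}_\ell$-basis respecting $V_1 \oplus V_2$, so their characteristic polynomial is a product of two quadratics over $\mathbb{F}_\ell$, contradicting irreducibility of the characteristic polynomial of $X$. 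Otherwise, $V_1$ and $V_2$ are swapped by the $\mathbb{F}_{\ell^2}/\mathbb{F}_\ell$-Frobenius $\sigma$, and for $m \in M_\ell$ the four eigenvalues over $\overline{\mathbb{F}}_\ell$ form the Galois-stable multiset $\{\alpha_1, \alpha_2, \sigma(\alpha_1), \sigma(\alpha_2)\}$; any $\mathbb{F}_\ell$-rational eigenvalue therefore appears with multiplicity at least two, precluding a linear factor of multiplicity one in the characteristic polynomial over $\mathbb{F}_\ell$. Since $Y$ has nonzero trace, it must lie in $M_\ell$, contradicting hypothesis (b). Both hypotheses (a) and (b) are thus jointly essential, and together they rule out all non-exceptional maximal subgroups.
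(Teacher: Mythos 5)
Your $(\Leftarrow)$ direction is essentially identical to the paper's argument: you rule out types \eqref{max:1+3}, \eqref{max:2+2}, and \eqref{max:twistedcubic} by the irreducibility of $X$'s characteristic polynomial, locate both $X$ and $Y$ inside the index-two subgroup $M_\ell$ of a type-\eqref{max:irred} subgroup by combining Lemma~\ref{L:conseqclass}\eqref{L:tracezero_GSp4poly} with Lemma~\ref{L:charpolygen}\eqref{L:tracezero_charpolygen}, and then split on whether $V_1,V_2$ are individually $\ff_\ell$-rational, using $X$ in the rational case and $Y$ in the conjugate-swapped case. This is exactly the paper's decomposition.

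Your $(\Rightarrow)$ direction departs from the paper, which instead invokes Proposition~\ref{P:odds} (Shinoda's conjugacy class counts) to show the relevant classes have positive density. Your explicit-construction route has two real gaps as written. First, multiplication by a generator $g$ of $\ff_{\ell^4}^\times$ is \emph{not} generally a symplectic similitude: the subgroup of $\ff_{\ell^4}^\times$ landing in $\GSp_4(\ff_\ell)$ is the index-$(\ell+1)$ subgroup $\{g : \Nm_{\ff_{\ell^4}/\ff_{\ell^2}}(g) \in \ff_\ell^\times\}$, so you must choose $g$ in this subgroup (but still generating $\ff_{\ell^4}$ over $\ff_\ell$) rather than a generator of the full multiplicative group. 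Second, your choice of $Y = \operatorname{diag}(\lambda_1,\lambda_2,\mu/\lambda_1,\mu/\lambda_2)$ with four distinct $\ff_\ell^\times$-entries fails outright for $\ell = 3$ (where $|\ff_3^\times| = 2$) and for $\ell = 5$ (where the only four distinct entries $\{1,2,3,4\}$ cannot pair off to a common product $\mu$). Both gaps are repairable — take $Y$ with a split $2\times 2$ block and a nonsplit $2\times 2$ block with nonzero trace, which exists for all odd $\ell$ — but as stated the constructions don't cover small primes, which is precisely why the paper's uniform appeal to Shinoda's tables (which also feed the quantitative estimates in Section~\ref{S:confidence}) is cleaner.
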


\begin{proof}
The `only if' direction follows from Proposition~\ref{P:odds} below, where we show that a {\textit{nonzero}} proportion of elements of $\operatorname{GSp}_4(\mathbb{F}_\ell)$ satisfy the conditions in \eqref{prop:irredelem} and \eqref{prop:redelem}.

Now assume that the group $G$ has elements $X$ and $Y$ as in the statement of the proposition. We have to show that $G = \GSp_4(\ff_\ell)$. By assumption, $G$ is not a subgroup of a maximal subgroup of type \eqref{max:exceptional}. For each of the remaining types of maximal subgroups in Lemma~\ref{lem:max_subgroups}, we will use one of the elements $X$ or $Y$ to rule out $G$ being contained in a subgroup of that type. 

\begin{enumerate}[\upshape (a)]
\item By Lemma~\ref{L:charpolygen}~\eqref{easy_necessary}, every element of a subgroup of type~\eqref{max:red} has a reducible characteristic polynomial. The same is true for elements of type~\eqref{max:twistedcubic} by Lemma~\ref{L:conseqclass}~\eqref{L:twistedcubic}. This is violated by the element $X$, so $G$ cannot be contained in a subgroup of type~\eqref{max:red} or type~\eqref{max:twistedcubic}.

\item Recall the notation used in the description of a type~\eqref{max:irred} maximal subgroups in Lemma~\ref{lem:max_subgroups}. By Lemma~\ref{L:conseqclass}~\ref{L:tracezero_GSp4poly}, every element in $G_\ell \setminus M_{\ell}$ has trace $0$.
By Lemma~\ref{L:charpolygen}~\eqref{L:tracezero_charpolygen}, an element with irreducible characteristic polynomial automatically has nonzero trace.  Hence both \(X\) and \(Y\) have nonzero traces, and so cannot be contained in \(G_\ell \smallsetminus M_\ell\).  We now consider two cases
\begin{enumerate}[\upshape (i) ]
    \item If the two subspaces are individually defined over ${\mathbb{F}_\ell}$, then every element in \(M_\ell\) preserves a two-dimensional subspace and hence has a reducible characteristic polynomial. This is violated by the element $X$.
\item If the two subspaces are permuted by $G_{{\mathbb{F}_\ell}}$, then the action of \(M_\ell\) on the corresponding subspaces $V$ and $V'$ are conjugate. Therefore, every $\mathbb{F}_\ell$-rational eigenvalue for the action of $\Frob_p$ on $V$, also appears as an eigenvalue for the action on $V'$, with the same multiplicity. This is violated by the element $Y$.
\end{enumerate}
Hence $G$ cannot be contained in a maximal subgroup of type~\eqref{max:irred}.

\end{enumerate}

Since any subgroup of $\operatorname{GSp}_4(\mathbb{F}_\ell)$ that is not contained in a proper maximal subgroup of $\operatorname{GSp}_4(\mathbb{F}_\ell)$ must equal $\operatorname{GSp}_4(\mathbb{F}_\ell)$, we are done. \qedhere

\end{proof}

\begin{rem}
\cite[Corollary~2.2]{ReynaKappen} gives a very similar criterion for a subgroup $G$ of $\GSp_4(\ff_\ell)$ to contain $\Sp_4(\ff_\ell)$, namely that it contains a transvection, and also an element with irreducible characteristic polynomial (and hence automatically nonzero trace).
\end{rem}

\subsection{Surjectivity tests}\label{S:surjtesting}

\subsubsection{Surjectivity test for $\ell=2$}\label{smallprimes}\hfill

\begin{prop}\label{P:surjtest2}
Let $A$ be the Jacobian of the hyperelliptic curve $y^2 + h(x) y = f(x)$ defined over $\qq$. Then $\rho_{A,2}$ is surjective if and only if the size of the Galois group of the splitting field of $4f+h^2$ is $720$.
\end{prop}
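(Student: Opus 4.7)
The plan is to reduce the claim to the classical description of $A[2]$ in terms of the Weierstrass points of $C$ and the well-known exceptional isomorphism $\GSp_4(\ff_2) \cong S_6$. First, since the similitude character on $\GSp_4(\ff_2)$ is trivial, we have $\GSp_4(\ff_2) = \Sp_4(\ff_2)$, and a direct count gives $|\Sp_4(\ff_2)| = 720$. So it suffices to show that the image of $\rho_{A,2}$ is all of $\GSp_4(\ff_2)$ if and only if the Galois group of the splitting field of $4f+h^2$ has order $720$.

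Next, I would complete the square to replace $C$ by the isomorphic model $y^2 = 4f(x) + h(x)^2$ (valid since we are in characteristic zero). Smoothness of the genus $2$ curve forces the polynomial $F(x) \colonequals 4f+h^2$ to be separable of degree $5$ or $6$, so it has exactly $6$ roots $\alpha_1,\dots,\alpha_6$ in $\overline{\qq}$ (counting the point at infinity as the sixth root in the degree $5$ case). These correspond to the six Weierstrass points $W_1,\dots,W_6$ of $C$. The Galois group $G_\qq$ acts on the set $\{W_1,\dots,W_6\}$ through its quotient $\Gal(L/\qq)$, where $L$ is the splitting field of $F(x)$, and this yields a subgroup of $S_6$ of order equal to $[L:\qq]$.

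Then I would invoke the standard description of the $2$-torsion of a genus $2$ Jacobian: as an $\ff_2[G_\qq]$-module,
\[
A[2] \;\cong\; \{S \subseteq \{1,\dots,6\} : |S|\ \text{even}\}\,/\,\{\emptyset, \{1,\dots,6\}\},
\]
where the class of $S$ corresponds to the divisor $\sum_{i\in S}(W_i - \infty)$ modulo principal divisors, with $S_6$ acting by permutation of indices. This is a $4$-dimensional $\ff_2$-vector space equipped with the Weil pairing, which corresponds to the symmetric difference pairing $(S,T) \mapsto |S\cap T| \bmod 2$, a nondegenerate symplectic form on this quotient. The resulting map $S_6 \hookrightarrow \GSp(A[2],\langle\cdot,\cdot\rangle) = \GSp_4(\ff_2)$ is an isomorphism because both groups have order $720$ and the action is faithful (a permutation of Weierstrass points acting trivially on all these divisor classes must be trivial).

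Under this identification the representation $\rho_{A,2}$ factors as
\[
G_\qq \twoheadrightarrow \Gal(L/\qq) \hookrightarrow S_6 \xrightarrow{\ \sim\ } \GSp_4(\ff_2),
\]
so $\rho_{A,2}$ is surjective if and only if $\Gal(L/\qq)$ is all of $S_6$, equivalently $|\Gal(L/\qq)| = 720$. I expect the only delicate point is justifying the Galois module description of $A[2]$ in the form stated (including the degree $5$ case where one Weierstrass point is the point at infinity); this is standard but requires citing a reference such as Mumford's \emph{Tata Lectures on Theta II} or an equivalent source, so the proof in the paper will likely just invoke it.
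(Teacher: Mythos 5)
Your proposal is correct and takes essentially the same route as the paper: the paper's proof is a one-line appeal to the facts that $\GSp_4(\ff_2)\cong S_6$ (of order $720$) and that $\rho_{A,2}$ is the permutation action of $G_\qq$ on the six roots of $4f+h^2$, which is exactly what you unpack via the Weierstrass-point description of $A[2]$ and the symmetric-difference form. The extra detail you supply (the even-subset model of $A[2]$, the Weil pairing as $|S\cap T|\bmod 2$, faithfulness of the $S_6$-action) is all correct and fills in what the paper takes as standard.
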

\begin{proof}
This follows from the fact that $\GSp_4(\ff_2) \cong S_6$ which is a group of size $720$, and that the representation $\rho_{A,2}$ is the permutation action of the Galois group on the six roots of $4f+h^2$.
\end{proof}

\subsubsection{Surjectivity tests for $\ell \neq 2$} \hfill

The tests to rule out the exceptional maximal subgroups rely on the existence of the finite lists $C_{1920}$ and $C_{720}$ (independent of \(\ell\)), and $C_{7, 5040}$ given in Lemma~\ref{L:conseqclass}\eqref{L:precompjust}.

\begin{test}[Tests for ruling out exceptional maximal subgroups of $\GSp_4(\ff_\ell)$ for $\ell \neq 2$]\label{T:exc} \hfill\\
Given a polynomial \(P_p(t) = t^4 - a_pt + b_pt^2 - pa_pt + p^2\) and \(\ell \geq 2\), 
\begin{enumerate}[\upshape (i)]
\item\label{T:1920} $P_p(t)$ passes Test~\ref{T:exc}~\eqref{T:1920}  if \(\ell \equiv \pm 1 \bmod{8}\) or \((a_p^2/p, b_p/p)\) mod \(\ell\) lies outside of  $C_{1920}$ mod \(\ell\).
\item\label{T:720} $P_p(t)$ passes Test~\ref{T:exc}~\eqref{T:720}  if \(\ell \equiv \pm 1 \bmod{12}\) or \((a_p^2/p, b_p/p)\) mod \(\ell\) lies outside of $C_{720}$ mod \(\ell\).
\item\label{T:5040} $P_p(t)$ passes Test~\ref{T:exc}~\eqref{T:5040}  if \(\ell \neq 7\) or \((a_p^2/p, b_p/p)\) mod \(\ell\) lies outside of $C_{7,5040}$.
\end{enumerate}
\end{test}

\begin{test}[Tests for ruling out non-exceptional maximal subgroups for $\ell \neq 2$]\label{T:nonexc}\hfill \\
Given a polynomial \(P_p(t) = t^4 - a_pt + b_pt^2 - pa_pt + p^2\) and \(\ell \geq 2\), 
\begin{enumerate}[\upshape (i)]
    \item\label{T:irrtest}  $P_p(t)$ passes Test~\ref{T:nonexc}~\eqref{T:irrtest}  if $P_p(t)$ modulo $\ell$ is irreducible.
    \item\label{T:tracezerotest}  $P_p(t)$ passes Test~\ref{T:nonexc}~\eqref{T:tracezerotest}  if $P_p(t)$ modulo $\ell$ has a linear factor of multiplicity \(1\) and has nonzero trace.
\end{enumerate}
\end{test}

For any one of the five tests above, say that the test \defi{succeeds} if a given polynomial $P_p(t)$ \defi{passes} the corresponding test.

\begin{rem}\label{R:verbose}
We call
an auxiliary prime \(p\) a \defi{witness} for a given prime \(\ell\) if the polynomial $P_p(t)$ passes one of our tests for $\ell$. The verbose output of our code prints witnesses for each of our tests for each prime \(\ell\) in $\PossiblyNonsurjectivePrimes$ but not in $\LikelyNonsurjectivePrimes(B)$.
\end{rem}

\subsection{Justification for surjectivity tests}\label{S:Just}
Considering Tests \ref{T:exc} and \ref{T:nonexc}, we define
\begin{align*}
C_\alpha &= \{M \in \GSp_4(\FF_\ell) : P_M(t) \text{ is irreducible}\} \\
C_\beta &= \{M \in \GSp_4(\FF_\ell) : \tr(M) \neq 0 \text{ and } P_M(t) \text{ has a linear factor of multiplicity $1$}\} \\
C_{\gamma_1} &= \left \{M \in \GSp_4(\FF_\ell) :  \left(\frac{\tr(M)^2}{\simi(M)},\frac{\midd(M)}{\simi(M)}\right) \not\in  C_{\ell,1920} \text{ or \(\ell \equiv \pm 1 \bmod{8}\)} \right \} \\
C_{\gamma_2} &= \left \{M \in \GSp_4(\FF_\ell) :  \left(\frac{\tr(M)^2}{\simi(M)},\frac{\midd(M)}{\simi(M)}\right) \not\in  C_{\ell,720} \text{ or \(\ell \equiv \pm 1 \bmod{12}\)} \right \} \\
C_{\gamma_3} &= \left \{M \in \GSp_4(\FF_\ell) :  \left(\frac{\tr(M)^2}{\simi(M)},\frac{\midd(M)}{\simi(M)}\right) \not\in  C_{\ell,5040} \text{ or \(\ell \neq 7\)} \right \} \\
C_\gamma &= C_{\gamma_1} \cap C_{\gamma_2} \cap C_{\gamma_3}.          
\end{align*}

\begin{proof}[Proof of Theorem \ref{T:mainthm}\eqref{part2} and Theorem \ref{T:Bbound}]
Let \(B>0\).
Since the list $\LikelyNonsurjectivePrimes(B)$ is a sublist of $\PossiblyNonsurjectivePrimes$, which contains all nonsurjective primes by Theorem~\ref{T:mainthm}\eqref{part1}, any prime which is not in the list $\PossiblyNonsurjectivePrimes$ is surjective. Let
$\ell\in\PossiblyNonsurjectivePrimes$ and \textbf{not} in $\LikelyNonsurjectivePrimes(B)$. If \(\ell=2\), then by Proposition~\ref{P:surjtest2}, $\rho_{A,2}$ is surjective. If \(\ell > 2\), this means that we found primes $p_1,p_2,p_3,p_4, p_5 \leq B$ each distinct from $\ell$ and of good reduction for $A$ for which  $\rho_{A,\ell}(\Frob_{p_1}) \in C_\alpha$, $\rho_{A,\ell}(\Frob_{p_2}) \in C_\beta$, $\rho_{A,\ell}(\Frob_{p_3}) \in C_{\gamma_1}$, $\rho_{A,\ell}(\Frob_{p_4}) \in C_{\gamma_2}$, and $\rho_{A,\ell}(\Frob_{p_4}) \in C_{\gamma_3}$. Note that by \eqref{E:WeilpairingSim}, the similitude factor \(\simi(\rho_{A,\ell}(\Frob_p))\) is \(p\).  Therefore, by Lemma~\ref{L:conseqclass}\eqref{L:precompjust}, it follows that \(\rho_{A, \ell}(G_\qq)\) is not contained in an exceptional maximal subgroup. The surjectivity of $\rho_{A,\ell}$ now follows from  Proposition~\ref{P:grpthy}.

Finally, we will show that if \(B\) is sufficiently large (as quantified by Theorem~\ref{T:Bbound}), then any prime \(\ell\) in $\LikelyNonsurjectivePrimes$ is nonsurjective.
Since the sets $C_\alpha$, $C_\beta$, $C_{\gamma_1}$, $C_{\gamma_2}$ and $C_{\gamma_3}$ are nonempty by Proposition~\ref{P:odds} below and closed under conjugation, it follows from Lemma \ref{lem:effective_cheb} that there exist primes $p_1,p_2,p_3,p_4,p_5 \leq B$ as above.
\end{proof}

\begin{rem}\label{R:GRHAHC}
If we assume both GRH and AHC, Ram Murty and Kumar Murty \cite[p. 52]{MR3025442} noted  (see also \cite[Theorem 2.3]{2001.05428}) that the bound \eqref{ECDT-GRH} can be replaced with  $p \ll \frac{(\log d_K)^2}{|S|}$. Proposition \ref{P:odds}, which follows, shows that the sets $C_\alpha$, $C_\beta$, and $C_\gamma$ have size at least $\frac{|\GSp_4(\FF_\ell)|}{10}$. This can be used to prove the ineffective version of Theorem \ref{T:Bbound} which relies on AHC noted in the introduction in a manner similar to the proof of Theorem \ref{T:Bbound}.
\end{rem}

\section{The probability of success}~\label{S:confidence}
In this section we prove Theorem~\ref{T:Bprob}, by studying the respective probabilities $\alpha_\ell$, $\beta_\ell$, and $\gamma_\ell$ that a matrix chosen uniformly at random from $\GSpFL$ is contained in each of $C_\alpha$, $C_\beta$, and $C_\gamma$ defined in Section~\ref{S:Just}.
\begin{prop}\label{P:odds}
Let $M$ be a matrix chosen uniformly at random from $\GSpFL$ with $\ell$ odd. Then
\begin{enumerate}[ \upshape (i)]
\item The probability that $M \in C_\alpha$ is given by $$\alpha_\ell = \frac{1}{4} - \frac{1}{2(\ell^2+1)}.$$
\item\label{P:odd2} The probability that $M \in C_\beta$ is given by $$\beta_\ell = \frac{3}{8} - \frac{3}{4(\ell-1)} + \frac{1}{2(\ell-1)^2}.$$
\item\label{P:ExpProb} The probability that $M \in C_\gamma$ is $$\gamma_\ell \geq 1 - \frac{3\ell}{\ell^2+1}.$$
\end{enumerate}

\end{prop}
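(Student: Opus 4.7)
The plan is to compute each probability by parameterizing $M \in \GSpFL$ via its characteristic polynomial $P_M(t) = t^4 - at^3 + bt^2 - \nu at + \nu^2$ (with $(\nu, a, b) = (\mult(M), \tr(M), \midd(M))$) and organizing the count by factorization type of $P_M$ over $\FF_\ell$. The sets $C_\alpha$, $C_\beta$, and the complement of $C_\gamma$ are unions of conjugacy classes, so each density equals $\sum_P |\{M : P_M = P\}|/|\GSpFL|$ summed over polynomials $P$ meeting the defining condition. For regular semisimple $M$, this fiber is the single class size $|\GSpFL|/|T|$, where $T \subseteq \GSpFL$ is the maximal torus centralizer of $M$.

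For part (i), elements with irreducible characteristic polynomial are regular semisimple and centralized by the anisotropic maximal torus of order $(\ell^2+1)(\ell-1)$. Irreducible self-reciprocal polynomials of similitude $\nu$ correspond bijectively to $\Gal(\FF_{\ell^4}/\FF_\ell)$-orbits on $\FF_{\ell^4}\setminus\FF_{\ell^2}$ with $\Nm_{\FF_{\ell^4}/\FF_{\ell^2}}(x) = \nu$: the Galois-equivariant involution $x \mapsto \nu/x$ on the root set forces $\nu/x = x^{\ell^2}$. The norm map $\FF_{\ell^4}^\times \to \FF_{\ell^2}^\times$ has kernel of order $\ell^2+1$, and of its $\ell^2+1$ preimages of $\nu$, exactly $2$ lie in $\FF_{\ell^2}$ (the square roots of $\nu$), yielding $\ell^2-1$ usable elements partitioned into $(\ell^2-1)/4$ Galois orbits. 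Summing over $\nu \in \FF_\ell^\times$ gives $(\ell-1)(\ell^2-1)/4$ polynomials; multiplying by the class size $|\GSpFL|/((\ell^2+1)(\ell-1))$ and dividing by $|\GSpFL|$ simplifies to $\alpha_\ell = \frac{\ell^2-1}{4(\ell^2+1)} = \frac{1}{4} - \frac{1}{2(\ell^2+1)}$.

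For part (ii), I would enumerate the factorization types contributing to $C_\beta$ (at least one linear factor of multiplicity exactly $1$, and $\tr\neq 0$). The two dominant types are (a) four distinct roots in $\FF_\ell$ paired as $\{\alpha,\nu/\alpha\},\{\beta,\nu/\beta\}$ (centralizer the split torus of order $(\ell-1)^3$, contributing leading density $1/8$), and (b) $P_M = (t-\alpha)(t-\nu/\alpha)q(t)$ with $q$ irreducible quadratic over $\FF_\ell$ and $\alpha \neq \pm\sqrt{\nu}$ (centralizer of order $(\ell-1)^2(\ell+1)$, contributing $1/4$), summing to the leading $3/8$. For part (iii), apply a union bound: let $S = \bigcup_\ast C_{\ell,\ast}$ with $|S| \leq 33$ by Lemma~\ref{L:conseqclass}\eqref{L:precompjust}. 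For each $(x_0,y_0)\in S$ and each $\nu \in \FF_\ell^\times$, the constraints $\tr(M)^2 = x_0\nu$ and $\midd(M) = y_0\nu$ restrict the triple $(\nu, a, b)$ to at most $\ell-1$ possibilities (at most $2$ values of $a$ per $\nu$ if $x_0\neq 0$, and $1$ if $x_0 = 0$); bounding each fiber of $M \mapsto P_M$ by the maximum class size and simplifying, together with a direct case-check for the small primes where the asymptotic estimate is not yet sharp, yields $\gamma_\ell \geq 1 - \frac{3\ell}{\ell^2+1}$.

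The main obstacle will be part (ii): one must carefully enumerate the Weyl group orbits on each torus, exclude boundary cases producing repeated linear factors (e.g.\ $\alpha = \nu/\alpha$, which makes $(t-\alpha)^2$ divide $P_M$), and subtract the sub-loci where $\tr(M) = 0$. These boundary contributions combine to produce the lower-order correction terms $-\frac{3}{4(\ell-1)}$ and $+\frac{1}{2(\ell-1)^2}$ in the exact formula, and keeping track of the overlaps (via inclusion-exclusion) is what makes the exact statement delicate rather than a simple leading-order estimate.
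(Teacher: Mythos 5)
Your overall strategy --- organizing the count by factorization type of the characteristic polynomial and summing conjugacy-class densities --- is essentially the paper's, which relies on Shinoda's explicit conjugacy-class data for $\GSp_4(\ff_\ell)$ with $\ell$ odd. Your part (i) is a correct, self-contained rederivation of the relevant entry of that table: the anisotropic torus has order $(\ell^2+1)(\ell-1)$, and the bijection between Galois orbits on $\{x \in \ff_{\ell^4}\setminus\ff_{\ell^2} : \Nm_{\ff_{\ell^4}/\ff_{\ell^2}}(x)=\nu\}$ and irreducible self-reciprocal quartics of similitude $\nu$ gives $(\ell^2-1)/4$ polynomials per $\nu$; multiplying out indeed yields $\alpha_\ell$. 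Part (ii) you acknowledge is only a sketch; to finish it one must reproduce the class counts and centralizer orders for all four relevant Shinoda types (the split type $H_0$, the linear-linear-quadratic type $J_0$, and the two double-root types $E_0$, $E_1$), and then excise the trace-zero locus from $H_0$ only --- the other three types are checked to automatically have nonzero trace. That accounting is exactly what produces the $-\tfrac{3}{4(\ell-1)}$ and $+\tfrac{1}{2(\ell-1)^2}$ corrections, and without carrying it out you have the leading constant $3/8$ but not the exact formula.

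The concrete gap is in part (iii). You propose ``bounding each fiber of $M \mapsto P_M$ by the maximum class size,'' but a fiber of this map is a \emph{union} of several conjugacy classes of different sizes --- a semisimple class together with various non-semisimple ones sharing the same characteristic polynomial --- so the maximum conjugacy-class size does not bound the fiber, and ``maximum fiber size'' is circular until you exhibit such a bound. What is actually needed is an upper bound on the total number of matrices in $\GSp_4(\ff_\ell)$ with a prescribed characteristic polynomial; the paper imports this as \cite[Theorem~3.5]{MR1440067}, which gives $(\ell+3)^8$. Without some such input the union bound does not close, and the claimed inequality $\gamma_\ell \geq 1 - 3\ell/(\ell^2+1)$ does not follow. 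Secondary point: for $x_0\neq 0$ the equation $a^2 = x_0\nu$ has two solutions in $a$ only for the half of $\nu\in\ff_\ell^\times$ making $x_0\nu$ a square (and none otherwise), which is how one gets the count of $\ell-1$ characteristic polynomials per pair rather than your $2(\ell-1)$; this matters if you want the constant $3$ in the final bound rather than something weaker.
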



\cite{Shinoda} characterizes all conjugacy classes of elements of $\GSpFL$ for $\ell$ odd, grouping them into 26 different types. For each type $\gamma$, Shinoda further computes the number $N(\gamma)$ of conjugacy classes of type $\gamma$ and the size $|C_{\GSpFL}(\gamma)|$ of the centralizer, which is the size $|C_{\GSpFL}(M)|$ of the centralizer of $M$ in $\GSpFL$ for any $M$ in a conjugacy class of type $\gamma$. The size $|C(\gamma)|$ of any conjugacy class of type $\gamma$ can then easily be computed as 
\(
|\C(\gamma)| =  \frac{|\GSpFL|}{|C_{\GSpFL}(\gamma)|}
\)
and the probability that a uniformly chosen $M \in \GSpFL$ has conjugacy type $\gamma$ is then given by
\begin{equation}
\label{eqn:probA}
\frac{N(\gamma) |\C(\gamma)|}{|\GSpFL|} = \frac{N(\gamma)}{|C_{\GSpFL}(\gamma)|}.
\end{equation}

To prove Proposition \ref{P:odds}, we will need to examine a handful of types of conjugacy classes of $\GSpFL$.
There is only a single conjugacy type $\gamma$ whose characteristic polynomials are irreducible. This type is denoted $K_0$ in \cite{Shinoda} where it is shown there that $N(K_0) = \frac{(\ell-1)(\ell^2-1)}{4}$ and $|C_{\GSpFL}(K_0)| = (\ell-1)(\ell^2 + 1)$.

While there is only one way for a polynomial to be irreducible, there are several ways for a quartic polynomial to have a root of odd order. However, only some of these can occur if $f(t)$ is the characteristic polynomial of a matrix $M \in \GSpFL$ and we only need to concern ourselves with the following three possibilities:

\begin{enumerate}[(a)]
\item\label{case:A} $f(t)$ splits completely over $\FF_\ell$;
\item \label{case:B}$f(t)$ has two roots over $\FF_\ell$, both of which occur with multiplicity one; and
\item $f(t)$ has two simple roots and one double root over $\FF_\ell$. 
\end{enumerate}

Cases~(\ref{case:A}) and (\ref{case:B}) correspond to the conjugacy types $H_0$ and $J_0$ in \cite{Shinoda} respectively. There are two types of conjugacy classes for which $f(t)$ has two simple roots and one double root, which are denoted by $E_0$ and $E_1$ in \cite{Shinoda}.

Data for the relevant conjugacy class types is given by Table \ref{T:conjclasssize}, including the probability that a uniform random $M \in \GSpFL$ has conjugacy type $\gamma$ via \eqref{eqn:probA}.

\begin{table}[h]
\renewcommand{\arraystretch}{1.25}
\begin{tabular}{|c|c|c|c|}
\hline
Type $\gamma$ in \cite{Shinoda} & $N(\gamma)$ &  $|C_{\GSpFL}(\gamma)|$ & Associated Probability\\
\hline
$K_0$ (Irreducible) & $\frac{(\ell-1)(\ell^2-1)}{4}$ &  $(\ell^2 + 1)(\ell-1)$  &  $\frac{1}{4} - \frac{1}{2(\ell^2+1)}$ \\ \hline
$H_0$ (Split) &  $\frac{(\ell-1)(\ell-3)^2}{8}$ & $(\ell -1)^3$ &  $\frac{1}{8} - \frac{1}{2(\ell-1)} + \frac{1}{2(\ell-1)^2}$\\ \hline
$J_0$ (Two Simple Roots) & $\frac{(\ell-1)^3}{4}$ &  $(\ell + 1)(\ell-1)^2$ & $ \frac{1}{4} - \frac{1}{2(\ell+1)}$\\ \hline
$E_0$ (One Double Root)  & $\frac{(\ell-1)(\ell-3)}{2}$ &   $\ell(\ell-1)^2(\ell^2-1)$ &  $\frac{1}{2\ell(\ell^2-1)} - \frac{1}{\ell(\ell-1)(\ell^2-1)}$\\ \hline
$E_1$ (One Double Root)  & $\frac{(\ell-1)(\ell-3)}{2}$ &   $\ell(\ell-1)^2$ & $\frac{1}{2\ell} - \frac{1}{\ell(\ell-1)}$ \\ \hline
\end{tabular}
\vspace{3pt}
\caption{Number of conjugacy classes and centralizer sizes for each conjugacy class type in \cite{Shinoda}.}
\label{T:conjclasssize}
\end{table}

\begin{proof}[Proof of Proposition~\ref{P:odds}]
Part (i) is simply the entry in Table \ref{T:conjclasssize} in the last column corresponding to the ``$K_0$ (Irreducible)" type. 

We now establish part (\ref{P:odd2}). As indicated in the discussion above Table~\ref{T:conjclasssize}, the only conjugacy classes of matrices in $\GSp_4(\ff_\ell)$ whose characteristic polynomials have some linear factors of odd multiplicity are those of the types $H_0, J_0, E_0,E_1$. However, for part~(\ref{P:odd2}) since we are only interested in matrices $M$ also having non-zero trace, it is insufficient to simply sum over the rightmost entries in the bottom four rows of Table \ref{T:conjclasssize}. From \cite[Table 2]{Shinoda}, we see that the elements of $E_0$ and $E_1$ have trace $\frac{c(a+1)^2}{a}$ for some $c,a \in \ff_\ell^\times$ with $a \ne \pm 1$. In particular, it follows that elements of types $E_0$ and $E_1$ have nonzero traces. The elements of type $J_0$ have trace $\frac{(c+a)(c+a^\ell)}{c}$ where $c \in \ff_\ell^\times$ and $a \in \ff_{\ell^2} \setminus \ff_\ell$. Therefore, the elements of type $J_0$ also have nonzero trace. 

It remains to analyze which conjugacy classes of Type $H_0$ have nonzero trace. Following \cite{Shinoda}, the $\frac{(\ell-1)(\ell-3)^2}{8}$ conjugacy classes of type $H_0$ correspond to quadruples of distinct elements in $a_1,a_2,b_1,b_2 \in \FF_\ell^\times$ satisfying $a_1 b_1 = a_2 b_2$ modulo the action of swapping any of $a_1$ with $b_1$, $a_2$ with $b_2$, or $a_1,b_1$ with $a_2, b_2$. The eigenvalues of any matrix in the conjugacy class are $a_1$, $a_2$, $b_1$, and $b_2$.  Consequently the matrix has trace zero only if either $a_2 = -a_1$ and $b_2 = -b_1$ or $b_1 = -a_2$ and $b_2 = -a_1$. This accounts for $\frac{(\ell-1)(\ell-3)}{4}$ of the $\frac{(\ell-1)(\ell-3)^2}{8}$ conjugacy classes of type $H_0$, leaving $\frac{(\ell-1)(\ell-3)(\ell-5)}{8}$ conjugacy classes with non-zero trace. As a result, the probability that a matrix $M \in \GSpFL$ chosen uniformly at random has non-zero trace and totally split characteristic polynomial is
\begin{equation}
\label{eqn:tracezero_split_prob}
\frac{(\ell-1)(\ell-3)(\ell-5)}{8(\ell -1)^3}= \frac{1}{8} - \frac{3}{4(\ell-1)} + \frac{1}{(\ell-1)^2}.
\end{equation}
To obtain part (ii), we add \eqref{eqn:tracezero_split_prob} to the entries in the rightmost column of the final three rows of Table \ref{T:conjclasssize}, getting
\begin{multline*}
\left(    \frac{1}{8} - \frac{3}{4(\ell-1)} + \frac{1}{(\ell-1)^2}\right) + \left ( \frac{1}{4} - \frac{1}{2(\ell+1)} \right ) + \left ( \frac{1}{2\ell(\ell^2-1)} - \frac{1}{\ell(\ell-1)(\ell^2-1)}\right) + \\ \left(\frac{1}{2\ell} - \frac{1}{\ell(\ell-1)}\right)  = \frac{3}{8} - \frac{3}{4(\ell-1)} + \frac{1}{2(\ell-1)^2}.
\end{multline*}
To prove \eqref{P:ExpProb}, note that for any pair $(u,v)$, the cardinality of the set
\[
\{
t^4 - at^3 + bt^2 - am t + m^2 : a,b \in \FF_\ell, m \in \FF_\ell^\times  \text{ and } (\tfrac{a^2}{m},\tfrac{b}{m}) = (u,v) \}
\]
is at most $\ell-1$. By \cite[Theorem 3.5]{MR1440067}, the number of matrices in $\GSp_4(\FF_\ell)$ with a given characteristic polynomial is at most $(\ell+3)^8$. Assuming $\ell \neq 7$, by combining these observations, and noting that $|C_{\ell,720} \cup C_{\ell,1920}| \leq 14$, we obtain the bound
\[ \gamma_\ell \ge 1 - \frac{14 (\ell-1) (\ell+3)^8}{|\GSp_4(\FF_\ell)|}. \]
For $\ell > 17$, this implies the claimed bound. For $3 \leq \ell \leq 17$, we directly check the claim using Magma.
\end{proof}

\begin{lem}
\label{L:CebOnePrime}
Let $C/\qq$ be a typical genus $2$ curve with Jacobian $A$ and suppose $\ell$ is an odd prime such that $\rho_{A,\ell}$ is surjective. For any $\epsilon > 0$, there exists an effective constant $B_0$ (with $B_0 > \ell N_A$) such that for any $B > B_0$ and each $\delta \in \{\alpha, \beta, \gamma\}$, we have
\[
\left|
\frac{|{\{p \text{ prime} : B \leq p \leq 2B \text{ and } \rho_{A,\ell}(\Frob_p) \in C_\delta \}}|}{|\{p \text{ prime} : B \leq p \leq 2B\}| } - \delta_\ell \right| < \epsilon.
\]
\end{lem}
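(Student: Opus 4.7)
The plan is to apply an effective form of the Chebotarev density theorem to the Galois extension $K \colonequals \qq(A[\ell])/\qq$. Since $\rho_{A,\ell}$ is assumed surjective, we have an isomorphism $G \colonequals \Gal(K/\qq) \cong \GSpFL$, and the primes unramified in $K$ are precisely those not dividing $\ell N_A$ by the N\'eron--Ogg--Shafarevich criterion. Requiring $B_0 > \ell N_A$ therefore ensures that every prime $p \in [B, 2B]$ is a good prime at which $\rho_{A,\ell}(\Frob_p)$ is well-defined and unramified.

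Next, I would observe that the three sets $C_\alpha$, $C_\beta$, $C_\gamma \subseteq \GSpFL$ are closed under conjugation, since membership depends only on the characteristic polynomial and similitude factor of a matrix. Hence the Chebotarev density theorem applies. For each $\delta \in \{\alpha, \beta, \gamma\}$, define
\[
\pi_{C_\delta}(x) \colonequals |\{p \leq x : p \nmid \ell N_A,\ \rho_{A,\ell}(\Frob_p) \in C_\delta\}|.
\]
An effective Chebotarev bound (e.g.\ \cite{MR553223}, or the conditional form \eqref{ECDT-GRH}) yields, for $x \geq 2$,
\[
\pi_{C_\delta}(x) = \frac{|C_\delta|}{|G|}\pi(x) + E(x, K),
\]
where $E(x,K) = o(\pi(x))$ as $x \to \infty$ with $K$ fixed, and where the implied constants are effective and depend only on $\ell$ and $N_A$ (via $K$). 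By Proposition \ref{P:odds}, the density $|C_\delta|/|G|$ equals precisely $\delta_\ell$.

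Taking the difference at $2B$ and $B$ gives
\[
\pi_{C_\delta}(2B) - \pi_{C_\delta}(B) = \delta_\ell \bigl(\pi(2B) - \pi(B)\bigr) + O(|E(2B,K)| + |E(B,K)|).
\]
The prime number theorem gives $\pi(2B) - \pi(B) \sim B/\log B$, while the error $|E(2B, K)| + |E(B,K)|$ is $o(B/\log B)$ as $B \to \infty$ for fixed $K$. Dividing through by $\pi(2B) - \pi(B)$ and choosing $B_0$ large enough (depending effectively on $\ell$, $N_A$, and $\epsilon$) so that the error term is smaller than $\epsilon \cdot (\pi(2B) - \pi(B))$ for all $B > B_0$, we obtain the claimed bound.

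The main obstacle is purely bookkeeping: one must confirm that some form of effective Chebotarev applies unconditionally (so that no GRH assumption is implicitly required here) and that the resulting $B_0$ can indeed be made effective in terms of $\ell$ and $N_A$. The standard Lagarias--Odlyzko bound, or the more recent refinements such as \cite{MR4472459}, suffices for this; the explicit form of $B_0$ is not needed, only its existence and effectivity.
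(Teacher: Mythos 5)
Your proposal follows essentially the same route as the paper: apply the Chebotarev density theorem (in an effective form) to the extension $\qq(A[\ell])/\qq$, observe that $C_\alpha$, $C_\beta$, $C_\gamma$ are closed under conjugation since membership depends only on the characteristic polynomial and similitude, and conclude from the definition of $\delta_\ell$ as the density $|C_\delta|/|G|$. One small slip: the identity $|C_\delta|/|G| = \delta_\ell$ holds \emph{by definition} of $\delta_\ell$ (stated at the top of Section~\ref{S:confidence}), not by Proposition~\ref{P:odds}, which instead gives closed-form values or lower bounds for these densities; and your observation that an unconditional effective Chebotarev theorem suffices is a useful clarification, as the paper's own remark that $B_0$ must exceed the GRH-conditional bound~\eqref{eq:B_bound} could be misread as making the lemma itself conditional.
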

\begin{proof}
Let $G = \Gal(\qq(A[\ell])/\qq)$ and $S \subseteq G$ be any subset that is closed under conjugation. By taking $B$ to be sufficiently large, we have that $B > \ell N_A$ and can make
\[
\left|
\frac{|{\{p \text{ prime} : B \leq p \leq 2B \text{ and } \Frob_p \in S \}}|}{|\{p \text{ prime} : B \leq p \leq 2B\}| } - \frac{|S|}{|G|} \right|
\]
arbitrarily small by \eqref{E:CDT}. Moreover, the previous statement can be made effective by using an effective version of the Chebotarev density theorem; in particular, the value $B_0$ must be larger than the bound $B$ from Equation~\eqref{eq:B_bound}. The result then follows because each of the sets $C_\alpha$, $C_\beta$, and $C_\gamma$ is closed under conjugation.
\end{proof}

For positive integers $n$ and $B > \ell N_A$, let $P(B,n)$ be the probability that $n$ primes $p_1,\ldots,p_n$ (possibly non-distinct) chosen uniformly at random in the interval $[B,2B]$ have the property that 
\begin{multline*}
     \rho_{A,\ell}(\Frob_{p_i}) \not\in C_\alpha \text{ for each $i$} \quad \text{or} \quad \rho_{A,\ell}(\Frob_{p_i}) \not\in C_\beta \text{ for each $i$} \quad \\ \text{or} \quad \rho_{A,\ell}(\Frob_{p_i}) \not\in C_\gamma \text{ for each $i$}.
\end{multline*}

\begin{cor}
\label{C:ntrials}
Suppose $C$ and $\ell$ are as in Lemma \ref{L:CebOnePrime} and let $n$ be a positive integer. For any $\epsilon > 0$, there exists an effective constant $B_0$ (with $B_0 > \ell N_A$) such that for all $B > B_0$, we have $$P(B,n) < (1-\alpha_\ell)^n  + (1-\beta_\ell)^n  + (1-\gamma_\ell)^n + \epsilon.$$
\end{cor}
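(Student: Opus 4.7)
The plan is to prove this as a routine consequence of Lemma~\ref{L:CebOnePrime} combined with a union bound. Writing $P_\delta(B,n)$ for the probability that $\rho_{A,\ell}(\Frob_{p_i}) \notin C_\delta$ holds for \emph{every} $i=1,\dots,n$, the definition of $P(B,n)$ yields the union bound
\[ P(B,n) \leq P_\alpha(B,n) + P_\beta(B,n) + P_\gamma(B,n). \]
Since the primes $p_1,\dots,p_n$ are drawn independently and uniformly from the primes in $[B,2B]$, each factor is a Bernoulli product:
\[ P_\delta(B,n) = \bigl(1-f_\delta(B)\bigr)^n, \qquad f_\delta(B) \colonequals \frac{|\{p \text{ prime}: B \leq p \leq 2B,\, \rho_{A,\ell}(\Frob_p) \in C_\delta\}|}{|\{p \text{ prime}: B \leq p \leq 2B\}|}. \]

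Next, I will transfer the approximation from Lemma~\ref{L:CebOnePrime} to the $n$-th power. Given $\epsilon > 0$, choose $\epsilon' > 0$ small enough that
\[ (1 - \delta_\ell + \epsilon')^n < (1 - \delta_\ell)^n + \epsilon/3 \]
for each $\delta \in \{\alpha,\beta,\gamma\}$; this is possible by continuity of $t \mapsto (1 - \delta_\ell + t)^n$ at $t=0$, and the required $\epsilon'$ is effectively computable from $\epsilon$, $n$, and the explicit values of $\alpha_\ell$, $\beta_\ell$, $\gamma_\ell$ given in Proposition~\ref{P:odds}. Applying Lemma~\ref{L:CebOnePrime} to each $\delta \in \{\alpha,\beta,\gamma\}$ with parameter $\epsilon'$ produces effective constants, whose maximum we take as $B_0$ (noting $B_0 > \ell N_A$), such that $f_\delta(B) > \delta_\ell - \epsilon'$ for all $B > B_0$ and all three choices of $\delta$.

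Combining these bounds, for $B > B_0$ we have $P_\delta(B,n) < (1-\delta_\ell+\epsilon')^n < (1-\delta_\ell)^n + \epsilon/3$, and summing the three estimates yields the desired inequality. There is no substantive obstacle here: the only subtlety is that the constant $B_0$ must remain effectively computable, which is ensured because the effective Chebotarev bound underlying Lemma~\ref{L:CebOnePrime} is effective and the choice of $\epsilon'$ from $\epsilon$ involves only elementary manipulation of the explicit probabilities from Proposition~\ref{P:odds}.
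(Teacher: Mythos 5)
Your proposal is correct and follows essentially the same approach as the paper: a union bound over the three events, independence of the $n$ draws to express each factor as $(1-f_\delta(B))^n$, and Lemma~\ref{L:CebOnePrime} plus continuity of $t \mapsto t^n$ to push each factor below $(1-\delta_\ell)^n + \epsilon/3$. You simply spell out the continuity step and the effective choice of $\epsilon'$, which the paper leaves implicit with the phrase ``can be made arbitrarily close.''
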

\begin{proof}
For $\delta \in \{\alpha,\beta,\gamma\}$, let $X_\delta$ be the event that none of the $\rho_{A,\ell}(\Frob_{p_i})$ are contained in $C_\delta$. We then have 
\[
P(X_\alpha \cup X_\beta \cup X_\gamma) 
\le
 P(X_\alpha) + P(X_\beta) + P(X_\gamma) 
 \]
The result then follows by Lemma \ref{L:CebOnePrime}, which shows that there exists a $B_0$ such that the probabilities of $X_\alpha$, $X_\beta$, and $X_\gamma$ can be made arbitrarily close to $(1-\alpha_\ell)^n$, $(1-\beta_\ell)^n$, and $(1-\gamma_\ell)^n$ respectively.
\end{proof}

\begin{proof}[Proof of Theorem \ref{T:Bprob}]
The claim made by Theorem \ref{T:Bprob} is that $P(B,n) < 3 \cdot \left(\frac{9}{10}\right)^n$ for $B$ sufficiently large. By Proposition \ref{P:odds}, we have $1 - \alpha_\ell \le \frac{4}{5}$, $1 - \beta_\ell \le \frac{7}{8}$, and $1 - \gamma_\ell \le \frac{9}{10}$ for all $\ell$ odd. The result then follows from Corollary \ref{C:ntrials} because $\left(\frac{4}{5}\right)^n + \left(\frac{7}{8}\right)^n + \left(\frac{9}{10}\right)^n < 3 \cdot \left(\frac{9}{10} \right)^n$.
\end{proof}

\section{Results of computation and interesting examples}\label{S:examples}

We report on the results of running our algorithm on a dataset of $1{,}743{,}737$ typical genus $2$ curves with conductor bounded by $2^{20}$ which are part of a new dataset of approximately $5$ million curves currently being prepared for addition into the LMFDB. Running our algorithm on all of these curves in parallel took about $35$ hours on MIT's Lovelace computer (see the Introduction for the hardware specification of this machine). Instructions for obtaining the entire results file may be found in the \path{README.md} file of the repository.

We first show in Table~\ref{tab:nonsurj_primes} how many of these curves were nonsurjective at particular primes, indicating also if this can be explained by the existence of a rational torsion point of that prime order. We found $31$ as the largest nonsurjective prime, which occurred for the curve 
\begin{equation}\label{equation:nonsurj_31}
y^2 + (x + 1)y = x^5 + 23x^4 - 48x^3 + 85x^2 - 69x + 45
\end{equation}
of conductor $7^2 \cdot 31^2$ and discriminant $7^2 \cdot 31^9$ (the prime $2$ was also nonsurjective here). The Jacobian of this curve does not admit a nontrivial rational $31$-torsion point, so unlike many other instances of nonsurjective primes we observed, this one cannot be explained by the presence of rational torsion. One could ask if it might be explained by the existence of a $\qq$-rational $31$-isogeny (as suggested by Algorithm~\ref{A:combinedPart1algo}, since \(31\) is returned by Algorithm~\ref{alg:related}). This seems to be the case - see forthcoming work of van Bommel, Chidambaram, Costa, and Kieffer \cite{chidambaram2022computing} where the isogeny class of this curve (among others) is computed.

\begin{table}[h]
\renewcommand{\arraystretch}{1.25}
\begin{tabular}{|c|c|c|c|}
\hline
nonsurj.~prime & \# w/~torsion  &\# w/o~torsion & Example curve\\
\hline
\(2\) & $1{,}060{,}966$ & $437{,}201$ & \href{https://lmfdb.org/Genus2Curve/Q/464/a/464/1}{464.a.464.1}\\
\(3\) & $76{,}265$ & $95{,}108$ & \href{https://lmfdb.org/Genus2Curve/Q/277/a/277/2}{277.a.277.2}\\
\(5\) & $11{,}365$ & $10{,}044$ & \href{https://lmfdb.org/Genus2Curve/Q/16108/b/64432/1}{16108.b.64432.1}\\
\(7\) & $1{,}857$ & $2{,}056$ & \href{https://lmfdb.org/Genus2Curve/Q/295/a/295/2}{295.a.295.2}\\
\(11\) & $162$ & $203$ & \href{https://lmfdb.org/Genus2Curve/Q/4288/b/548864/1}{4288.b.548864.1}\\
\(13\) & $106$ & $261$ & \href{https://lmfdb.org/Genus2Curve/Q/439587/d/439587/1}{439587.d.439587.1}\\
\(17\) & $22$ & $51$ & \href{https://lmfdb.org/Genus2Curve/Q/1996/b/510976/1}{1996.b.510976.1}\\
\(19\) & $10$ & $20$ & 1468.6012928\\
\(23\) & $2$ & $8$ & 6784.1821066133504\\
\(29\) & $1$ & $5$ & 79056.59014987776\\
\(31\) & $0$ & $1$ & 47089.1295541485872879\\
\hline
\end{tabular}
\vspace{3pt}
\caption{\label{tab:nonsurj_primes}Nonsurjective primes in the dataset, and whether they are explained by torsion, with examples from the LMFDB dataset if available, else a string of the form ``conductor.discrimnant''.}
\end{table}

We also observed (see Table~\ref{tab:hist_nonmax_count}) that the vast majority of curves had less than $3$ nonsurjective primes.

\begin{table}[h]
\renewcommand{\arraystretch}{1.25}
\begin{tabular}{|c|c|c|c|}
\hline
\# nonsurj.~primes & \# curves & Example curve & Nonsurj.~primes of example\\
\hline
$0$ & $199{,}183$ & \href{https://lmfdb.org/Genus2Curve/Q/743/a/743/1}{743.a.743.1} & --\\
$1$ & $1{,}394{,}671$ & \href{https://lmfdb.org/Genus2Curve/Q/1923/a/1923/1}{1923.a.1923.1} & 5(torsion)\\
$2$ & $148{,}606$ & \href{https://lmfdb.org/Genus2Curve/Q/976/a/999424/1}{976.a.999424.1} & 2, 29(torsion)\\
$3$ & $1{,}277$ & \href{https://lmfdb.org/Genus2Curve/Q/15876/a/15876/1}{15876.a.15876.1} & 2, 3, 5\\\hline
\end{tabular}
\vspace{3pt}
\caption{\label{tab:hist_nonmax_count}Frequency count of nonsurjective primes in the dataset, with examples from the LMFDB dataset.}
\end{table}

It is interesting to compare Tables~\ref{tab:nonsurj_primes} and \ref{tab:hist_nonmax_count} to the analogous tables for non-CM elliptic curves over $\qq$ ($3{,}816{,}674$ curves), which are Tables~\ref{tab:nonsurj_primes_ecq} and \ref{tab:hist_nonmax_count_ecq} respectively (we omit example curves here since they can be readily searched for in the LMFDB).

\begin{table}[h]
\renewcommand{\arraystretch}{1.25}
\begin{tabular}{|c|c|c|}
\hline
nonsurj.~prime & \# w/~torsion  & \# w/o~torsion\\
\hline
\(2\) & $1{,}332{,}490$ & $5{,}726$\\
\(3\) & $57{,}930$ & $213{,}654$\\
\(5\) & $1{,}545$ & $19{,}211$\\
\(7\) & $80$ & $4{,}100$\\
\(11\) & $0$ & $156$\\
\(13\) & $0$ & $736$\\
\(17\) & $0$ & $40$\\
\(37\) & $0$ & $96$\\
\hline
\end{tabular}
\vspace{3pt}
\caption{\label{tab:nonsurj_primes_ecq}Nonsurjective primes for non-CM elliptic curves over $\qq$ in the LMFDB, and whether they are explained by torsion.}
\end{table}

\begin{table}[h]
\renewcommand{\arraystretch}{1.25}
\begin{tabular}{|c|c|}
\hline
\# nonsurj.~primes & \# curves\\
\hline
$0$ & $2{,}233{,}530$\\
$1$ & $1{,}530{,}524$\\
$2$ & $52{,}620$\\
\hline
\end{tabular}
\vspace{3pt}
\caption{\label{tab:hist_nonmax_count_ecq}Frequency count of nonsurjective primes for non-CM elliptic curves over $\qq$ in the LMFDB.}
\end{table}

We observe a similar pattern regarding the majority of curves nonsurjective at $2$ being explained by torsion, though in the elliptic curve case a much larger proportion are explained by $2$-torsion than for genus $2$ curves. This switches for the nonsurjective prime $3$ in both cases, although again for elliptic curves, the discrepancy is much starker. The zeroes in the torsion column for Table~\ref{tab:nonsurj_primes_ecq} are explained by Mazur's torsion theorem. The number of nonsurjective primes between genus 1 and genus 2 is qualitatively different: the majority of elliptic curves do not have any nonsurjective primes, while the vast majority of genus 2 curves have precisely one nonsurjective prime. It is also curious that the elliptic curve dataset does not contain a curve with $3$ nonsurjective primes.

We conclude with a few examples that illustrate where Algorithm~\ref{A:combinedPart1algo} fails when the abelian surface has extra (geometric) endomorphisms.

\begin{example}
The Jacobian \(A\) of the genus \(2\) curve \href{https://www.lmfdb.org/Genus2Curve/Q/3125/a/3125/1}{3125.a.3125.1} on the LMFDB given by $y^2 + y = x^5$ has \(\End(A_\qq) = \zz\) but \(\End(A_{\bar{\qq}}) = \zz[\zeta_5]\).
Let \(\phi\) be the Dirichlet character of modulus \(5\) defined by the Legendre symbol
\[\phi \colon (\zz/5\zz)^\times \to \{\pm 1\}, \qquad 2 \mapsto -1.\]
In this case, Algorithm~\ref{alg:quad_char} fails to find an auxilliary prime \(p < 1000\) for which \(a_p \neq 0\) and \(\phi(p) = -1\).
This is consistent with the endomorphism calculation, since the trace of \(\rho_{A, \ell}(\Frob_p)\) is \(0\) for all primes \(p\) that do not split completely in \(\qq(\zeta_5)\) and any inert prime in \(\qq(\sqrt{5})\) automatically does not split completely in \(\qq(\zeta_5)\).
\end{example}

\begin{example}
The modular curve \(X_1(13)\) (\href{https://www.lmfdb.org/Genus2Curve/Q/169/a/169/1}{169.a.169.1}) has genus \(2\) and its Jacobian \(J_1(13)\) has CM by \(\zz[\zeta_3]\) over \(\qq\).
As in \cite[Claim 2, page 45]{MazurTate}, for any prime \(\ell\) that splits as \(\pi \bar{\pi}\) in \(\qq(\zeta_3)\), the representation \(J_1(13)[\ell]\) splits as a direct sum \(V_{\pi} \oplus V_{\bar{\pi}}\) of two \(2\)-dimensional subrepresentations that are dual to each other.  (A similar statement holds for \(J_1(13)[\ell] \otimes_{\ff_\ell} \bar{\ff}_\ell\), and so this representation is never absolutely irreducible.)  As expected, Algorithm~\ref{alg:related} fails to find an auxiliary prime \(p < 1000\) for which \(R_p\) is nonzero.
\end{example}

\begin{example}
The first (ordered by conductor) curve whose Jacobian $J$ admits real multiplication over $\overline{\qq}$ is the curve \href{https://www.lmfdb.org/Genus2Curve/Q/529/a/529/1}{529.a.529.1}; indeed, this Jacobian is isogenous to the Jacobian of the modular curve $X_0(23)$. Since there is a single Galois orbit of newforms - call it $f$ - of level $\Gamma_0(23)$ and weight $2$, we have that $J$ is isogenous to the abelian variety $A_f$ associated to $f$, and thus we expect the integer $M_{\text{self-dual}}$ output by Algorithm~\ref{alg:cuspforms} to be zero for any auxiliary prime, which is indeed the case.
\end{example}

\bibliographystyle{amsalpha}
\bibliography{galrepabsurf}

\newpage

\appendix

\section{Exceptional maximal subgroups of 
\texorpdfstring{\(\GSp_4(\ff_\ell)\)}{GSp4Fell}}

{\small
\begin{table}[h!]\label{T:excgrpgen}
    \begin{tabular}{c|c|c|c}
        \(\ell\) & type & choices & generators \\ \hline
         \(\ell \equiv 5 \bmod{8}\) & \(G_{1920}\) & \(b^2=-1\) in \(\ff_\ell\) & \begin{tabular}{@{}c@{}}\( \begin{pmatrix} 1& 0& 0& -1\\ 0& 1& -1& 0 \\ 0& 1& 1& 0 \\ 1& 0& 0&1 \end{pmatrix}\), \(\begin{pmatrix} 1& 0& 0& b\\0& 1& b& 0 \\ 0 & b & 1 & 0 \\ b & 0 & 0 & 1 \end{pmatrix}\), \\[2pt] \(\begin{pmatrix} 1 & 0 & 0 & -1 \\ 0 & 1 & 1 & 0 \\ 0 & -1& 1& 0 \\ 1& 0& 0& 1 \end{pmatrix}\), \(\begin{pmatrix} 1& 0& 1& 0 \\ 0& 1& 0& 1 \\ -1& 0& 1& 0 \\ 0& -1& 0& 1\end{pmatrix}\) \end{tabular} \\\hline
         \(\ell \equiv 3 \bmod{8}\) & \(G_{1920}\) & \(b^2=-2\) in \(\ff_\ell\) & \begin{tabular}{@{}c@{}}\( \begin{pmatrix} 1& 0& 0& -1\\ 0& 1& -1& 0 \\ 0& 1& 1& 0 \\ 1& 0& 0&1 \end{pmatrix}\), \(\begin{pmatrix} 0& 0& 0& b\\0& 0& b& 0 \\ 0 & b & 2 & 0 \\ b & 0 & 0 & 2 \end{pmatrix}\), \\[2pt] \(\begin{pmatrix} 1 & 0 & 0 & -1 \\ 0 & 1 & 1 & 0 \\ 0 & -1& 1& 0 \\ 1& 0& 0& 1 \end{pmatrix}\), \(\begin{pmatrix} 1& 0& 1& 0 \\ 0& 1& 0& 1 \\ -1& 0& 1& 0 \\ 0& -1& 0& 1\end{pmatrix}\)  \end{tabular} \\\hline
         \(\ell \equiv 7 \bmod{12}\) & \(G_{720}\) & \(a^2+a+1=0\) in \(\ff_\ell\) & \begin{tabular}{@{}c@{}} \(\begin{pmatrix}a& 0& 0& 0 \\ 0& a& 0& 0 \\ 0& 0& 1& 0 \\ 0 & 0 & 0& 1\end{pmatrix}\), \(\begin{pmatrix} a& 0& 0& 0 \\ 0 & 1 & 0 & 0 \\0 & 0& a& 0 \\ 0 & 0 & 0 & 1 \end{pmatrix}\), \\[2pt] \(\begin{pmatrix} a& 0& -a-1& a+1 \\ 0& a& -a-1& -a-1 \\ -a-1& -a-1& -1& 0 \\ a+1& -a-1& 0& -1 \end{pmatrix}\), \(\begin{pmatrix}0& -1& 0& 0 \\ 1& 0& 0& 0 \\ 0&  0& 0& -1 \\ 0& 0& 1& 0\end{pmatrix}\) \end{tabular} \\\hline
         \(\ell \equiv 5 \bmod{12}\) & \(G_{720}\) & \(b^2=-1\) in \(\ff_\ell\) & \begin{tabular}{@{}c@{}} \(\begin{pmatrix} -1& 0& 0& -1 \\ 0& -1& -1& 0 \\ 0 & 1& 0& 0 \\ 1 & 0& 0& 0 \end{pmatrix}\), \(\begin{pmatrix} 0& 0& 0& 1 \\ 0 & -1 & -1& 0 \\ 0 & 1& 0& 0 \\ -1& 0& 0& -1 \end{pmatrix}\),\\[2pt] \(\begin{pmatrix} -b-1 & b & 2b & -2b+1 \\ b& b-1& 2b+1& 2b \\ b & b-1& -b-2& -b \\ -b-1& b& -b& b-2 \end{pmatrix}\), \(\begin{pmatrix} 0& -b& -2b& 0\\ b& 0& 0& 2b \\ -2b & 0 & 0 & -b \\ 0 & 2b &  b & 0 \end{pmatrix}\)  \end{tabular} \\ \hline
          \(\ell = 7\) & \(G_{5040}\) & \begin{tabular}{@{}c@{}}\(a=2\) satisfies \\[2pt]  \(a^2+a+1=0\)\end{tabular}  & \begin{tabular}{@{}c@{}} \(\begin{pmatrix}2& 0& 0& 0 \\ 0& 2& 0& 0 \\ 0& 0& 1& 0 \\ 0 & 0 & 0& 1\end{pmatrix}\), \(\begin{pmatrix} 2& 0& 0& 0 \\ 0 & 1 & 0 & 0 \\0 & 0& 2& 0 \\ 0 & 0 & 0 & 1 \end{pmatrix}\), \\[2pt] \(\begin{pmatrix} 6 & 0 & 5 & 2 \\ 0 & 6 & 5 & 5 \\ 5 & 5 & 4 & 0 \\ 2 & 5 & 0 & 4 \end{pmatrix}\), \(\begin{pmatrix}0& 6& 0& 0 \\ 1& 0& 0& 0 \\ 0&  0& 0& 6 \\ 0& 0& 1& 0\end{pmatrix}\), \(\begin{pmatrix} 4 & 6 & 0 & 0 \\ 6& 6& 0& 0 \\ 0 & 0 & 4 & 1 \\ 0 & 0 & 1 & 6 \end{pmatrix}\)  \end{tabular}
    \end{tabular}
    \vspace{10pt}
    \caption{Explicit generators for each exceptional maximal subgroup in \(\GSp_4(\ff_\ell)\) (up to conjugacy).  The matrices described in Table~\ref{tab:exceptional} depend on an auxiliary choice of a parameter denoted either $a$ and $b$ in each case. In each row, any one choice of the corresponding $a$ and $b$ satisfying the equations described in the table suffices.}\label{tab:exceptional}
\end{table}

}


\end{document}